\definecolor{cite}{HTML}{11871E}
\definecolor{url}{HTML}{698996}
\definecolor{link}{HTML}{912F1B}
\tikzstyle{arrow} = [-{Straight Barb[scale=0.8]}, line width=0.2mm]
\tikzset{
math to/.tip={Glyph[glyph math command=rightarrow]},
loop/.tip={Glyph[glyph math command=looparrowleft, swap]},
}
\crefname{diagram}{diagram}{diagrams}
\Crefname{prop}{Proposition}{Propositions}
\Crefname{lem}{Lemma}{Lemmas}
\Crefname{cor}{Corollary}{Corollaries}
\Crefname{thm}{Theorem}{Theorems}
\Crefname{alphThm}{Theorem}{Theorems}
\Crefname{defn}{Definition}{Definitions}
\Crefname{notation}{Notation}{Notations}
\Crefname{cons}{Construction}{Constructions}
\Crefname{rmk}{Remark}{Remarks}
\Crefname{obs}{Observation}{Observations}
\Crefname{trick}{Trick}{Tricks}
\Crefname{warning}{Warning}{Warnings}
\Crefname{conj}{Conjecture}{Conjectures}
\Crefname{assump}{Assumption}{Assumptions}
\Crefname{recollect}{Recollection}{Recollections}
\Crefname{terminology}{Terminology}{Terminologies}
\Crefname{conditionsec}{Condition}{Conditions}
\Crefname{fact}{Fact}{Facts}
\Crefname{question}{Question}{Questions}
\Crefname{example}{Example}{Examples}
\Crefname{figure}{Figure}{Figures}
\newtheorem{thm}{Theorem}[section]
\newtheorem{prop}[thm]{Proposition}
\newtheorem{lem}[thm]{Lemma}
\newtheorem{cor}[thm]{Corollary}
\newtheorem{alphThm}{Theorem}
\newcommand{\neutralize}[1]{\expandafter\let\csname c@#1\endcsname\count@}
\newtheorem*{thm*}{Theorem}
\newtheorem*{prop*}{Proposition}
\newtheorem*{lem*}{Lemma}
\newtheorem*{cor*}{Corollary}
\newtheorem{alphConj}{Conjecture}
\newtheorem{alphCor}[alphThm]{Corollary}
\newtheorem{alphProp}{Proposition}
\theoremstyle{definition}
\newtheorem*{defn*}{Definition}
\newtheorem{defn}[thm]{Definition}
\newtheorem{cons}[thm]{Construction}
\newtheorem{recollect}[thm]{Recollections}
\newtheorem{example}[thm]{Example}
\newtheorem{rmk}[thm]{Remark}
\newtheorem{obs}[thm]{Observation}
\newtheorem{constr}[thm]{Construction}
\newcommand{\spc}{\mathcal{S}}
\DeclareMathOperator{\ind}{\mathrm{Ind}}
\newcommand{\sphere}{\mathbb{S}}
\newcommand{\spectra}{\mathrm{Sp}}
\newcommand{\id}{\mathrm{id}}
\DeclareMathOperator{\coev}{coev}
\DeclareMathOperator{\im}{Im}
\DeclareMathOperator{\fib}{fib}
\DeclareMathOperator{\cofib}{cofib}
\DeclareMathOperator{\laxeq}{laxeq}
\newcommand{\artimes}{\overrightarrow{\times}}
\DeclareMathOperator{\equaliser}{eq}
\newcommand{\eval}{\mathrm{ev}}
\DeclareMathOperator{\pr}{pr}
\DeclareMathOperator{\map}{Map}
\DeclareMathOperator{\func}{Fun}
\DeclareMathOperator{\Hom}{Hom}
\DeclareMathOperator{\End}{End}
\DeclareMathOperator{\Nil}{Nil}
\DeclareMathOperator{\Endspec}{\mathrm{end}}
\DeclareMathOperator{\aut}{Aut}
\newcommand{\Endbar}{\overline{\mathrm{End}}}
\newcommand{\Nilbar}{\overline{\mathrm{Nil}}}
\newcommand{\trivbar}{\overline{\triv}}
\newcommand{\NKbar}[1]{\overline{N#1}}
\newcommand{\onecategory}[1]{\mathrm{#1}}
\newcommand{\cat}{\onecategory{Cat}}
\newcommand{\lpresentable}{\onecategory{Pr}^{\mathrm{L}}}
\newcommand{\rpresentable}{\onecategory{Pr}^{\mathrm{R}}}
\newcommand{\catperf}{\cat^{\mathrm{perf}}}
\newcommand{\category}[1]{\mathcal{#1}} 
\newcommand{\op}{^{\mathrm{op}}}
\DeclareMathOperator{\alg}{Alg}
\newcommand{\module}{\mathrm{Mod}}
\newcommand{\tensoralg}[2]{T_{#1}(#2)}
\newcommand{\tensoralgaut}[2]{T_{#1}(#2)[{#2}^{-1}]}
\newcommand{\Dperf}[1]{\mathrm{D}^{\mathrm{perf}}}
\def\colim{\qopname\relax m{colim}}
\newcommand{\bbZ}{\mathbb{Z}}
\newcommand{\bbQ}{\mathbb{Q}}
\newcommand{\bbN}{\mathbb{N}}
\newcommand{\bbA}{\mathbb{A}}
\newcommand{\TC}{\mathrm{TC}}
\DeclareMathOperator{\shift}{shift}
\DeclareMathOperator{\triv}{triv}
\newcommand{\twistedpower}[2]{{#1}^{(#2)}}
\DeclareMathOperator{\fgt}{fgt}
\newcommand{\shifttransformation}[1]{[#1]}
\newcommand{\loc}[1]{\mathrm{loc}_{#1}}
\newcommand{\freeend}[1]{\mathrm{free}_{#1}}
\newcommand{\freeendbar}[1]{\overline{\mathrm{free}}_{#1}}
\newcommand{\arrdisp}{0.33ex}
\newcommand{\arrdisplacementsp}{0.72ex}
\newcommand{\ardis}{\ar@<\arrdisp>}
\newcommand{\ardissp}{\ar@<\arrdisplacementsp>}
\title{A twisted Bass-Heller-Swan decomposition for localising invariants}
\author{\textsc{Dominik Kirstein}\thanks{kirstein@mpim-bonn.mpg.de} \and \textsc{Christian Kremer}\thanks{kremer@mpim-bonn.mpg.de}}
\date{\today}
\begin{document}
\maketitle

\begin{abstract}
    We generalise the classical Bass-Heller-Swan decomposition for the $K$-theory of (twisted) Laurent algebras to a splitting for general localising invariants of certain categories of twisted automorphisms.
    As an application, we obtain splitting formulas for Waldhausen's $A$-theory of mapping tori and for the $K$-theory of certain tensor algebras.
    We identify the $\Nil$-terms appearing in this splitting in two ways. Firstly, as the reduced $K$-theory of twisted endomorphisms. Secondly, as the reduced $K$-theory of twisted nilpotent endomorphisms.
    Finally, we generalise classical vanishing results for $\Nil$-terms of regular rings to our setting.
\end{abstract}

\tableofcontents 

\section{Introduction}

The algebraic $K$-theory of polynomial- and Laurent extensions has been an object of interest since the very beginnings of the subject.
The fundamental theorem of algebraic $K$-theory, sometimes also called the Bass-Heller-Swan decomposition, states that for a ring $R$ there is a splitting
\begin{equation*}
    K_n(R[t, t^{-1}]) \simeq K_n(R) \oplus K_{n-1}(R) \oplus NK^+_n(R) \oplus NK^-_n(R).
\end{equation*}
This result dates back to the very beginnings of $K$-theory.
It was first established for regular rings and $n=1$ by Bass-Heller-Swan \cite{BHS} and Quillen later proved the general version for connective $K$-theory \cite{Grayson76}. The numerous applications of algebraic $K$-theory to geometric topology, such as the $s$-cobordism theorem or Farrell's fibering theorem \cite{Farrell71}, allowed to extract concrete geometric applications.
The splitting also served as source for the first constructions of negative algebraic $K$-theory.

There have been various generalisations of this result in two directions, and we give an incomplete list here.
In one direction, one replaces the ring $R$ by a homotopy coherent version.
H\"uttemann-Klein-Vogell-Waldhausen-Williams \cite{HKWWW01} prove an $A$-theoretic splitting for products $X \times S^1$, which can be thought of as a splitting for the Laurent ring spectrum $\sphere[\Omega X][t, t^{-1}]$.
Fontes-Ogle \cite{FontesOgle18} prove a version for connective $\sphere$-algebras, H\"uttemann \cite{Huettemann21} proves a version for $\bbZ$-graded rings.
Most recently, Saunier \cite{Saunier22} establishes such a splitting for general localising invariants of stable $\infty$-categories.
In a different direction, generalisations allow for twisted Laurent extensions.
This was established by Grayson \cite{Grayson88} for rings.
Waldhausen \cite{Waldhausen78} proves a version for his generalised Laurent extension and L\"uck-Steimle \cite{LueckSteimle16} prove a version for additive categories.
These results, combined with the Farrell-Jones conjecture, provide a powerful tool for computational and qualitative results about the algebraic $K$-theory of group rings \cite{LueckSteimle16II}. 

The goal of this work is to provide a common generalisation of most of the previously mentioned results.
We prove a general splitting result for the $K$-theory of certain categorical mapping tori, with the essential ingredient being Land-Tamme's work on the $K$-theory of pushouts \cite{LandTamme23}.
In contrast to most of the previous work, we also allow twists by noninvertible endomorphisms.
We work in the setting of localising invariants of stable $\infty$-categories as pioneered by Blumberg-Gepner-Tabuada \cite{BGT13}.
This immediately proves the splitting not only for nonconnective $K$-theory but also other localising invariants like topological Hochschild homology and its cousins.

\subsection*{Main results}

Let $\category{C}$ be an idempotent complete stable $\infty$-category together with an exact functor $\alpha \colon \category{C} \to \category{C}$.
Denote by $\category{C}_{h \bbN}$ its mapping torus, i.e. the pushout of the span $\category{C} \xleftarrow{\id \oplus \alpha} \category{C} \oplus \category{C} \xrightarrow{\id \oplus \id} \category{C}$ in $\catperf$, the $\infty$-category of idempotent complete stable $\infty$-categories.
In the case where $\alpha$ is an equivalence, this mapping torus (up to ignoring certain finiteness conditions) consists of pairs $(x,f)$ of an object $x \in \category{C}$ together with an equivalence $f \colon x \xrightarrow{\simeq} \alpha^{-1}(x)$.
We also call $\category{C}_{h \bbN}$ the category of \textit{twisted automorphisms}.
The general description of $\category{C}_{h \bbN}$ can be found in  \cref{def:twisted_endomorphism,lem:identification_mapping_torus_twisted_automorphisms}.
\begin{alphThm}\label{thmintro:bhs_decomposition}
    Let $\alpha \colon \category{C} \to \category{C}$ be an exact endofunctor of an idempotent complete stable $\infty$-category and $E \colon \catperf \to \category{E}$ a localising invariant.
    The assembly map $E(\category{C})_{h \bbN} \to E(\category{C}_{h \bbN})$ has a natural splitting
    \begin{equation*}
        E(\category{C}_{h \bbN}) \simeq E(\category{C})_{h \bbN} \oplus NE_\alpha(\category{C}) \oplus \NKbar{E}_{\alpha}(\category{C}).
    \end{equation*}
\end{alphThm}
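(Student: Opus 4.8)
The plan is to combine Land-Tamme's pushout theorem with the Verdier-quotient description of \cref{lem:identification_mapping_torus_twisted_automorphisms}: first use Land-Tamme on the span defining $\category{C}_{h\bbN}$ to pin down the image of the assembly map, then identify the cofibre of the assembly map as a sum of two $\Nil$-terms by organising the twisted endomorphism categories of \cref{def:twisted_endomorphism} into a ``categorical $\mathbb{P}^{1}$-bundle'' and a dévissage.

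\emph{The assembly summand.} Applying a localising invariant $E$ to the span $\category{C}\xleftarrow{\id\oplus\alpha}\category{C}\oplus\category{C}\xrightarrow{\id\oplus\id}\category{C}$ does not commute with the pushout, but by Land-Tamme's theorem on the $K$-theory of pushouts there is a corrected pushout --- a category $\category{Q}$ receiving the two copies of $\category{C}$, together with a comparison functor $\category{Q}\to\category{C}_{h\bbN}$ --- on which $E$ \emph{is} a pushout. Since $E$ is additive, $E(\category{C}\oplus\category{C})\simeq E(\category{C})^{\oplus 2}$, and the pushout of $E(\category{C})\xleftarrow{(\id,\,E(\alpha))}E(\category{C})^{\oplus 2}\xrightarrow{(\id,\,\id)}E(\category{C})$ in $\category{E}$ is $\cofib\bigl(E(\alpha)-\id\colon E(\category{C})\to E(\category{C})\bigr)$, i.e. $E(\category{C})_{h\bbN}$. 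Hence $E(\category{Q})\simeq E(\category{C})_{h\bbN}$, and the composite $E(\category{C})_{h\bbN}\simeq E(\category{Q})\to E(\category{C}_{h\bbN})$ is the assembly map, by naturality of Land-Tamme's construction and of the assembly map.

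\emph{The $\Nil$-summands.} Next I would invoke \cref{lem:identification_mapping_torus_twisted_automorphisms}, which exhibits $\category{C}_{h\bbN}$ as a common Verdier quotient of two twisted endomorphism categories built from $\alpha$ in the sense of \cref{def:twisted_endomorphism} --- a polynomial-type one $\category{T}_\alpha$ and a ``bar''/nilpotent-type one $\overline{\category{T}}_\alpha$ of twisted nilpotent endomorphisms --- with kernels that, after dévissage, are the categories of twisted nilpotent endomorphisms. Glued along $\category{C}_{h\bbN}$ these two charts form a categorical $\mathbb{P}^{1}$-bundle $\category{P}_\alpha$, and $\category{Q}$ fits the same picture, so that $E$ applied to the gluing square is (co)cartesian --- this is again where Land-Tamme enters (equivalently, excision for localisation squares, once the kernels have been identified by dévissage). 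This produces a fibre sequence
\begin{equation*}
    E(\category{P}_\alpha)\longrightarrow E(\category{T}_\alpha)\oplus E(\overline{\category{T}}_\alpha)\longrightarrow E(\category{C}_{h\bbN}).
\end{equation*}
The forgetful functors split $E(\category{C})$ off each chart, giving $E(\category{T}_\alpha)\simeq E(\category{C})\oplus NE_\alpha(\category{C})$ and $E(\overline{\category{T}}_\alpha)\simeq E(\category{C})\oplus\NKbar{E}_\alpha(\category{C})$ with the stated reduced summands; the projective bundle formula for localising invariants gives $E(\category{P}_\alpha)\simeq E(\category{C})^{\oplus 2}$; and the first map of the fibre sequence vanishes on the reduced summands while, on $E(\category{C})^{\oplus 2}$, it is the difference of the two chart restrictions. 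Computing how the structure object and its Serre twist restrict to the two charts identifies this, up to an automorphism of $E(\category{C})^{\oplus 2}$, with a map of the form $\left(\begin{smallmatrix}\id&\id\\\id&E(\alpha)\end{smallmatrix}\right)$, whose cofibre is $E(\category{C})_{h\bbN}$. Taking cofibres in the fibre sequence therefore yields $E(\category{C}_{h\bbN})\simeq E(\category{C})_{h\bbN}\oplus NE_\alpha(\category{C})\oplus\NKbar{E}_\alpha(\category{C})$, and the $E(\category{C})_{h\bbN}$-summand is the image of the assembly map by the previous paragraph. Naturality in the pair $(\category{C},\alpha)$ is inherited from that of every category and functor used.

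\emph{Main obstacle.} The crux is making the two halves fit: identifying Land-Tamme's abstract correction $\category{Q}$ --- and the projective bundle formula --- with the concrete twisted endomorphism categories, and showing that the resulting gluing square becomes (co)cartesian under $E$; in particular one must control the category of twisted nilpotent endomorphisms and the dévissage when $\alpha$ is \emph{not} an equivalence, which is exactly where this generality goes beyond the classical Bass--Heller--Swan arguments. Once that is in place, the remaining bookkeeping --- the restriction matrix on the $\mathbb{P}^{1}$-bundle and which summands cancel --- is routine, provided the twist by $E(\alpha)$ is tracked carefully.
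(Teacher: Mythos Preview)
Your first paragraph misreads Land--Tamme. Their theorem does not produce an intermediate category $\category{Q}$ with $E(\category{Q})\simeq E(\category{C})_{h\bbN}$ sitting under the two copies of $\category{C}$; the $\odot$-product \emph{is} the pushout $\category{C}_{h\bbN}$ (that is the content of \cite[Theorem~3.2]{LandTamme23}). The correction happens at the \emph{input}: one replaces $\category{C}\oplus\category{C}$ by the full subcategory $\im(\category{C}\oplus\category{C})$ it generates inside the partially lax pullback $\category{C}\artimes\category{C}$, and it is the square with this new upper-left corner that becomes a pushout after $E$. So there is no category $\category{Q}$ with the property you want, and the assembly summand has to be extracted differently.

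Your second paragraph is the classical projective-line strategy, and this is precisely the route that fails for non-invertible $\alpha$ --- the paper says so explicitly in the outline of proof in the introduction. Concretely, \cref{lem:identification_mapping_torus_twisted_automorphisms} does not exhibit $\category{C}_{h\bbN}$ as a common Verdier quotient of two endomorphism categories; it identifies $\category{C}_{h\bbN}$ with $\aut_{\alpha^R}(\ind(\category{C}))^\omega$, and the localisation functor of \cref{constr:localisation} (which would give one such Verdier quotient) already requires a left adjoint to $\alpha$. For general $\alpha$ there is no second ``chart'' $\overline{\category{T}}_\alpha$ with the same localisation, hence no $\mathbb{P}^1$-bundle to glue and no projective bundle formula to invoke. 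You flag this obstacle at the end, but the outline offers no mechanism to overcome it; the d\'evissage you allude to does not produce the missing chart.

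The paper sidesteps the projective line entirely. The key step is \cref{lem:nil_orthogonal_decomposition}: the image $\im(\category{C}\oplus\category{C})\subseteq\category{C}\artimes\category{C}$ admits an \emph{orthogonal decomposition} by $\Nil_\alpha(\category{C})$ and $\Nilbar_\alpha(\category{C})$, so $E(\im(\category{C}\oplus\category{C}))\simeq E(\Nil_\alpha(\category{C}))\oplus E(\Nilbar_\alpha(\category{C}))$. Feeding in the retracted splitting $E(\Nil_\alpha(\category{C}))\simeq E(\category{C})\oplus\Omega NE_\alpha(\category{C})$ (and its barred analogue), the Land--Tamme fibre sequence reads
\[
E(\category{C})^{\oplus 2}\oplus\Omega NE_\alpha(\category{C})\oplus\Omega\NKbar{E}_\alpha(\category{C})
\xrightarrow{\ \left(\begin{smallmatrix}\id&\alpha\\ \id&\id\end{smallmatrix}\right)\oplus\,0\,\oplus\,0\ }
E(\category{C})^{\oplus 2}\longrightarrow E(\category{C}_{h\bbN}),
\]
from which the splitting, and the identification of the assembly map with the inclusion of $E(\category{C})_{h\bbN}$, follow immediately. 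No gluing, no projective bundle, and nothing in the argument needs $\alpha$ to be invertible.
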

The mapping torus $E(\category{C})_{h \bbN}$ fits into the cofiber sequence $E(\category{C}) \xrightarrow{\id - \alpha} E(\category{C}) \to E(\category{C})_{h \bbN}$.
If $E$ takes values in spectra, one obtains an associated long exact sequence of homotopy groups relating the homotopy groups of $E(\category{C})$ and $E(\category{C})_{h \bbN}$.

Let us explain the $\Nil$-terms appearing in this decomposition.
There is a category $\End_\alpha(\category{C})$ of twisted endomorphisms, which consists of pairs $(x,f)$ of an object $x \in \category{C}$ together with a morphism $f \colon x \to \alpha(x)$.
Denote by $\Nil_\alpha(\category{C}) \subseteq \End_\alpha(\category{C})$ the subcategory generated by the trivial endomorphisms $(x, 0 \colon x \to \alpha(x))$ under finite colimits and retracts.
If $\alpha$ is an equivalence, by \cref{prop:characterisation_nilpotent_endomorphism} these are precisely the homotopy nilpotent endomorphisms $(x,f)$ such that the composite $x \xrightarrow{f} \alpha(x) \xrightarrow{\alpha(f)} \alpha^2(x) \to \dots \to \alpha^n(x)$ is trivial for large $n$.
The inclusion $\triv \colon \category{C} \to \Nil_\alpha(\category{C})$ of trivial endomorphisms admits a retraction and $NE_\alpha(\category{C})$ is defined by the splitting
\begin{equation}\label{eq:splitting_k_theory_nil}
    E(\Nil_\alpha(\category{C})) \simeq E(\category{C}) \oplus \Omega NE_\alpha(\category{C}).
\end{equation}
The description of $\NKbar{E}_{\alpha}(\category{C})$ is similar using twisted endomorphisms $\alpha(x) \to x$ instead.

The $\Nil$-terms in \cref{eq:splitting_k_theory_nil} also arise in a different context.
There is the category $\End_{\alpha^R}(\ind(\category{C}))^\omega$ whose objects, up to a finiteness condition, are again pairs $(x, f \colon \alpha(x) \to x)$.
To any object $x \in \category{C}$ one can associate the free twisted endomorphism
\begin{equation*}
    \freeend{}(x) = \left(\bigoplus_{n \ge 0} \alpha^n(x), \shift\right),
\end{equation*}
where $\shift$ denotes the composition $\alpha(\bigoplus_{n \ge 0} \alpha^n(x)) \simeq \bigoplus_{n \ge 1} \alpha^n(x) \rightarrow \bigoplus_{n \ge 0} \alpha^n(x)$.
The map $\freeend{} \colon \category{C} \to \End_{\alpha^R}(\ind(\category{C}))^\omega$ again admits a retraction.
Our second main result identifies its cofiber with the $\Nil$-term from above.
\begin{alphThm}\label{thmintro:splitting_twisted_endomorphisms}
    The functor $\freeend{}$ induces a splitting
    \begin{equation*}
        E(\End_{\alpha^R}(\ind(\category{C}))^\omega) \simeq E(\category{C}) \oplus NE_\alpha(\category{C}).
    \end{equation*}
\end{alphThm}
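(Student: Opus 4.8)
The plan is to split off $E(\category{C})$ using a free–forget adjunction, reducing to the computation of one cofiber, and then to identify that cofiber by playing a localisation sequence against \cref{thmintro:bhs_decomposition}.

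Write $\mathcal{E} := \End_{\alpha^R}(\ind(\category{C}))$. Via the adjunction $\alpha \dashv \alpha^R$ this is $\laxeq(\alpha, \id)$, the category of pairs $(x, g \colon \alpha x \to x)$, equivalently the category of modules over the twisted tensor–algebra monad $\mathbb{T}_\alpha = \bigoplus_{n \ge 0} \alpha^n$ on $\ind(\category{C})$. Then $\freeend{}$ is the restriction to compact objects of the left adjoint $\mathrm{free}$ of the conservative, colimit–preserving forgetful functor $\mathcal{E} \to \ind(\category{C})$, hence preserves compact objects; and the functor $p \colon (x,g) \mapsto \cofib(g \colon \alpha x \to x)$ — base change along the augmentation $\mathbb{T}_\alpha \to \id$ — is exact, preserves compact objects (it sends $\mathrm{free}(x)$ to $x$), and satisfies $p \circ \freeend{} \simeq \id_\category{C}$ because $\cofib(\shift)$ is the degree–zero summand. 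Applying $E$, the map $E(\freeend{})$ is a split monomorphism with retraction $E(p)$, so $E(\End_{\alpha^R}(\ind(\category{C}))^\omega) \simeq E(\category{C}) \oplus P$ with $P := \cofib(E(\freeend{}))$; it remains to prove $P \simeq NE_\alpha(\category{C})$.

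For this I would use the localisation sequence obtained by inverting the structure map. Let $\Nilbar_\alpha(\category{C}) \subseteq \End_{\alpha^R}(\ind(\category{C}))^\omega$ be the thick subcategory generated by the trivial objects $(x, 0 \colon \alpha x \to x)$ for $x \in \category{C}$ (these are compact); by construction this is the category occurring in the definition of $\NKbar{E}_\alpha(\category{C})$, so $E(\Nilbar_\alpha(\category{C})) \simeq E(\category{C}) \oplus \Omega \NKbar{E}_\alpha(\category{C})$. The claim is that there is a Verdier sequence of idempotent–complete stable $\infty$–categories
\[
\Nilbar_\alpha(\category{C}) \hookrightarrow \End_{\alpha^R}(\ind(\category{C}))^\omega \xrightarrow{q} \category{C}_{h\bbN}.
\]
At the presentable level this is the recollement of $\module_{\mathbb{T}_\alpha}(\ind(\category{C}))$ into its ``$t$–complete'' part (generated by the trivial modules) and its ``$t$–inverted'' part (modules over the Laurent monad $\bigoplus_{n \in \bbZ} \alpha^n$, i.e.\ the mapping torus); restricting to compact objects gives the displayed sequence — the categorical shadow of the Thomason–Neeman localisation sequence relating $\perfectCat$ of a twisted polynomial ring, its $t$–torsion part, and $\perfectCat$ of the associated twisted Laurent ring. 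One also checks that $q \circ \freeend{}$ is the canonical functor $j \colon \category{C} \to \category{C}_{h\bbN}$ (both are free–Laurent–module functors). Applying $E$ gives a cofiber sequence $E(\Nilbar_\alpha(\category{C})) \to E(\End_{\alpha^R}(\ind(\category{C}))^\omega) \to E(\category{C}_{h\bbN})$.

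Finally, comparing: the equivalence $q \circ \freeend{} \simeq j$ produces a morphism from the cofiber sequence $E(\category{C}) \xrightarrow{E(\freeend{})} E(\End_{\alpha^R}(\ind(\category{C}))^\omega) \to P$ to the cofiber sequence $E(\category{C}) \xrightarrow{E(j)} E(\category{C}_{h\bbN}) \to \cofib(E(j))$ which is the identity on the first terms, and the octahedral axiom then yields a cofiber sequence $P \to \cofib(E(j)) \to \cofib(E(q)) \simeq \Sigma E(\Nilbar_\alpha(\category{C}))$. Feeding in \cref{thmintro:bhs_decomposition}, namely $E(\category{C}_{h\bbN}) \simeq E(\category{C})_{h\bbN} \oplus NE_\alpha(\category{C}) \oplus \NKbar{E}_\alpha(\category{C})$, together with the factorisation $E(j) \colon E(\category{C}) \to E(\category{C})_{h\bbN} \hookrightarrow E(\category{C}_{h\bbN})$ and $\cofib(E(\category{C}) \to E(\category{C})_{h\bbN}) \simeq \Sigma E(\category{C})$, gives $\cofib(E(j)) \simeq \Sigma E(\category{C}) \oplus NE_\alpha(\category{C}) \oplus \NKbar{E}_\alpha(\category{C})$ and $\Sigma E(\Nilbar_\alpha(\category{C})) \simeq \Sigma E(\category{C}) \oplus \NKbar{E}_\alpha(\category{C})$; the cofiber sequence $P \to \cofib(E(j)) \to \Sigma E(\Nilbar_\alpha(\category{C}))$ then identifies $P \simeq NE_\alpha(\category{C})$. \emph{The main obstacle} is the last input: one must know that under these identifications the second map is the projection killing the $NE_\alpha(\category{C})$–summand — equivalently, that the boundary map $E(\category{C}_{h\bbN}) \to \Sigma E(\Nilbar_\alpha(\category{C}))$ of the Verdier sequence annihilates $NE_\alpha(\category{C})$ and is the identity on $\NKbar{E}_\alpha(\category{C})$. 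I would prove this compatibly with the proof of \cref{thmintro:bhs_decomposition}: the $NE_\alpha(\category{C})$–summand of $E(\category{C}_{h\bbN})$ lifts along $E(q)$ since it is carried by a functor factoring through $\End_{\alpha^R}(\ind(\category{C}))^\omega$ (built from $\freeend{}$ and the twisted–nilpotent–endomorphism model of $NE_\alpha$), while the $\NKbar{E}_\alpha(\category{C})$–summand is precisely the image of the connecting map, matching the $\Omega \NKbar{E}_\alpha(\category{C})$ inside $E(\Nilbar_\alpha(\category{C}))$. The remaining ingredients — the adjunction, the recollement, and the bookkeeping — are routine.
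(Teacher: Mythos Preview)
Your approach differs from the paper's. The paper does \emph{not} deduce this from \cref{thmintro:bhs_decomposition}; it runs an independent Land--Tamme argument, parallel in structure to the proof of \cref{thmintro:bhs_decomposition}, by presenting $\End_{\alpha^R}(\ind(\category{C}))^\omega$ as the pushout of the span $\category{C} \xleftarrow{\alpha\oplus\id} \category{C}\oplus\category{C} \xrightarrow{a_0\oplus a_1} \category{C}\otimes\Delta^1$ in $\catperf$ and exhibiting a semiorthogonal decomposition of the resulting $\im(\category{C}\oplus\category{C})$ into $\Nil_\alpha(\category{C})$ and a copy of $\category{C}$. Your retraction $p=\cofib$ is exactly the one the paper constructs; the disagreement is solely in how the complement $P$ is identified. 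Your route via the localisation sequence $\Nilbar_\alpha(\category{C}) \to \End_{\alpha^R}(\ind(\category{C}))^\omega \to \category{C}_{h\bbN}$ is the natural polynomial-versus-Laurent strategy and would, if completed, make the theorem a genuine corollary of \cref{thmintro:bhs_decomposition} rather than a second pushout computation. The sequence itself is available in this generality: the quotient identification is \cref{lem:identification_mapping_torus_twisted_automorphisms}, and the kernel identification follows from the argument of \cref{prop:characterisation_nilpotent_endomorphism} applied with $\alpha^R$ in place of $\alpha$.

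The gap is exactly where you mark it, and it is not routine. The octahedral step yields a fiber sequence $P \to \Sigma E(\category{C}) \oplus NE_\alpha \oplus \NKbar{E}_\alpha \to \Sigma E(\category{C}) \oplus \NKbar{E}_\alpha$, and you need the second map to be the projection. Your justification that the $NE_\alpha$ summand lies in the kernel is only a sketch: in the paper's proof of \cref{thmintro:bhs_decomposition} that summand arises from $\Nil_\alpha(\category{C})$ embedded via $j_1$ in a partially lax pullback, and producing a compatible lift to $\End_{\alpha^R}(\ind(\category{C}))^\omega$ is not obvious. Worse, you do not address why the map is an \emph{equivalence} on the $\Sigma E(\category{C})$ and $\NKbar{E}_\alpha$ summands --- a priori it could have off-diagonal components. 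Verifying all of this forces you back inside the Land--Tamme computation underlying \cref{thmintro:bhs_decomposition}, which undercuts the economy of the deduction. The paper's direct argument sidesteps these compatibilities entirely, at the price of some apparent duplication of method.
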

There is an analogous splitting for the $\NKbar{E}_\alpha$-term.
This result goes back to Waldhausen \cite[Theorem 13.5]{Waldhausen78} in the setting of generalised polynomial extensions of discrete rings.
Land-Tamme \cite[Corollary 4.5, Proposition 4.7]{LandTamme23} prove a version for tensor algebras.
Our result is a generalisation to the case of categories not generated by a single object.

Using a recent dévissage result of Burklund-Levy \cite{BurklundLevy23}, we can prove vanishing of $\Nil$-terms under regularity assumptions, which generalises the classical vanishing result for regular rings.
\begin{alphCor}\label{thmintro:regularity}
    Suppose that $\alpha \colon \category{C} \to \category{C}$ is a left $t$-exact endofunctor of a stable category with a bounded $t$-structure.
    Then the connective $\Nil$-term $\tau_{\ge 0} NK_\alpha(\category{C}) \simeq 0$ vanishes.
    If $\category{C}^\heartsuit$ is Noetherian, then also the nonconnective $\Nil$-term $NK_\alpha(\category{C}) \simeq 0$ vanishes.
\end{alphCor}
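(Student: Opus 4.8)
The plan is to exhibit the vanishing of $NK_\alpha(\category{C})$ as a two-step dévissage: first transport the question from the stable category $\Nil_\alpha(\category{C})$ to its heart by means of the dévissage theorem of Burklund-Levy \cite{BurklundLevy23}, and then run Quillen's classical dévissage theorem on the heart, which will be an abelian category of nilpotent twisted endomorphisms. Concretely, I would show that the inclusion $\triv\colon\category{C}\to\Nil_\alpha(\category{C})$ of trivial endomorphisms is an equivalence on connective $K$-theory, and — under the Noetherian hypothesis — on all of nonconnective $K$-theory; by the defining splitting \cref{eq:splitting_k_theory_nil} this yields $\tau_{\ge0}NK_\alpha(\category{C})\simeq0$, resp.\ $NK_\alpha(\category{C})\simeq0$.

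The key structural input is that left $t$-exactness of $\alpha$ equips $\End_\alpha(\category{C})$, and hence its stable subcategory $\Nil_\alpha(\category{C})$, with a bounded $t$-structure. One declares $(x,f)$ to be (co)connective precisely when $x$ is, so that the forgetful functor is $t$-exact; the only thing to check is that the truncation of the underlying object carries a canonical twisted endomorphism, and for the coconnective truncation this holds because the composite $x\xrightarrow{f}\alpha(x)\to\alpha(\tau_{\le n}x)$ factors uniquely through $\tau_{\le n}x$, as $\alpha(\tau_{\le n}x)$ is $(\le n)$-truncated by left $t$-exactness; the connective truncation is then forced by the fiber sequence together with the vanishing $\map(\category{C}_{\ge a},\category{C}_{\le b})\simeq 0$ for $a>b$. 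One must further verify that $\Nil_\alpha(\category{C})$ is closed under these truncation functors, so that it inherits the bounded $t$-structure, and identify its heart with the abelian category $\Nil_{\alpha^\heartsuit}(\category{C}^\heartsuit)$ of pairs $(M,\phi\colon M\to\alpha^\heartsuit(M))$ with $M\in\category{C}^\heartsuit$ and $\phi$ nilpotent, where $\alpha^\heartsuit:=\mathrm{H}_0\circ\alpha|_{\category{C}^\heartsuit}$ is the left-exact functor induced on the heart. This identification of the heart of $\Nil_\alpha(\category{C})$ is the step I expect to be the main obstacle; the rest of the argument is an assembly of standard inputs.

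Granting it, Quillen's dévissage theorem applies to the exact inclusion $\category{C}^\heartsuit\hookrightarrow\Nil_{\alpha^\heartsuit}(\category{C}^\heartsuit)$ of trivial endomorphisms $(M,0)$: this subcategory is closed under subobjects, quotients and finite products, and every $(M,\phi)$ in $\Nil_{\alpha^\heartsuit}(\category{C}^\heartsuit)$ admits the finite filtration $0=N_0\subseteq N_1\subseteq\dots\subseteq N_k=M$ with $N_i:=\ker\big(M\xrightarrow{\phi^{(i)}}(\alpha^\heartsuit)^i(M)\big)$, whose subquotients carry the zero endomorphism. The point here is the inclusion $\phi(N_i)\subseteq\alpha^\heartsuit(N_{i-1})$, which is precisely where left-exactness of $\alpha^\heartsuit$ enters, via $\ker\alpha^\heartsuit(\phi^{(i-1)})=\alpha^\heartsuit(\ker\phi^{(i-1)})$. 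Hence $\triv$ induces an equivalence on connective $K$-theory of the hearts. Since the Burklund-Levy equivalences $\tau_{\ge0}K(\category{D})\simeq\tau_{\ge0}K(\category{D}^\heartsuit)$ are natural in $t$-exact functors, and $\triv$ is $t$-exact with heart-level restriction the inclusion above, chaining $\tau_{\ge0}K(\category{C})\simeq\tau_{\ge0}K(\category{C}^\heartsuit)$ and $\tau_{\ge0}K(\Nil_\alpha(\category{C}))\simeq\tau_{\ge0}K(\Nil_{\alpha^\heartsuit}(\category{C}^\heartsuit))$ shows that $\triv\colon\category{C}\to\Nil_\alpha(\category{C})$ is an equivalence on connective $K$-theory, giving the first claim.

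For the nonconnective statement, the Noetherian hypothesis on $\category{C}^\heartsuit$ passes to $\Nil_{\alpha^\heartsuit}(\category{C}^\heartsuit)$ — an ascending chain of subobjects of $(M,\phi)$ restricts to one of $M$ — so that the negative $K$-theory of both abelian categories vanishes and Quillen dévissage gives an equivalence on all of $K$-theory, while this same hypothesis is what Burklund-Levy need to promote their dévissage from connective to nonconnective $K$-theory. Chaining the equivalences as before shows $\triv$ is an equivalence on nonconnective $K$-theory, and hence $NK_\alpha(\category{C})\simeq0$.
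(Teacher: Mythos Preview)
Your approach is correct but takes a considerably longer route than the paper. The paper's proof applies Burklund--Levy's abstract d\'evissage \cite[Theorem~1.3]{BurklundLevy23} directly to $\triv\colon\category{C}\to\Nil_\alpha(\category{C})$: that result only requires a bounded $t$-structure on the \emph{source}, that the image generate the target, and that the restriction to $\category{C}^\heartsuit$ be fully faithful. Generation is the very definition of $\Nil_\alpha(\category{C})$, and full faithfulness is the one-line computation $\Hom_{\End_\alpha(\category{C})}((x,0),(y,0)) \simeq \Hom_{\category{C}}(x,y)\times\Omega\Hom_{\category{C}}(x,\alpha y)\simeq\Hom_{\category{C}}(x,y)$ for $x,y\in\category{C}^\heartsuit$, using left $t$-exactness of $\alpha$. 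No $t$-structure on $\Nil_\alpha(\category{C})$ enters.

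Your route---building the pointwise $t$-structure on $\Nil_\alpha(\category{C})$, identifying its heart, running Quillen d\'evissage there, and invoking the theorem of the heart---is essentially unwinding Burklund--Levy's proof in this particular instance. The step you flag as the main obstacle really is the nontrivial one: the paper separately constructs the pointwise $t$-structure on $\End_\alpha(\category{C})$ and shows it restricts to $\Nil_\alpha(\category{C})$ (\cref{prop:t_structure_endomorphisms} and the corollary following it), but only under the additional hypothesis that $\alpha$ admits a left adjoint, which is not assumed in the statement you are proving. Your sketch can in fact be completed without that hypothesis---one shows that $\Nil_\alpha(\category{C})$ coincides with the full subcategory of $\End_\alpha(\category{C})$ on objects all of whose homology objects lie in the Serre subcategory $\Nil_{\alpha^\heartsuit}(\category{C}^\heartsuit)\subseteq\End_\alpha(\category{C})^\heartsuit$, which is manifestly truncation-closed with the expected heart---so what you buy for the extra work is a sharper version of the paper's auxiliary $t$-structure result.
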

Analogous vanishing results hold for $\NKbar{K}_\alpha$ if we instead require $\alpha$ to be right $t$-exact.

\subsection*{Applications}

For a ring spectrum $R$ and a $(R,R)$-bimodule $M$ denote by $T_R(M) = \bigoplus_{n \ge 0} M^{\otimes_R^n}$ its tensor algebra.
Suppose that $M$ admits a right dual $M^\vee$.
Define the localised tensor algebra by
\begin{align*}
    \tensoralgaut{R}{M} \simeq \colim\left(\bigoplus_{n \ge 0} M^{\otimes_R^n} \xrightarrow{\coev} \bigoplus_{n \ge 0} M^\vee \otimes_R M^{\otimes_R^n} \xrightarrow{\coev} \dots \right).
\end{align*}
The bimodule $M$ induces an endomorphism $M \otimes_R - \colon \module_R^\omega \to \module_R^\omega$ of the category of perfect left $R$-modules.
It turns out that $(\module_R^\omega)_{h \bbN} \simeq \module_{\tensoralgaut{R}{M}}^\omega$.
\cref{thmintro:bhs_decomposition} then reduces to the following result.
\begin{alphThm}\label{thmintro:splitting_localised_tensor_algebra}
    There is a splitting
    \begin{equation*}
        E(\tensoralgaut{R}{M}) \simeq E(R)_{h \bbN} \oplus NE_M(R) \oplus \NKbar{E}_M(R).
    \end{equation*}
\end{alphThm}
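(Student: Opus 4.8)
The plan is to deduce \cref{thmintro:splitting_localised_tensor_algebra} by specialising \cref{thmintro:bhs_decomposition} to the idempotent complete stable $\infty$-category $\category{C} = \module_R^\omega$ of perfect left $R$-modules, equipped with the endofunctor $\alpha = M\otimes_R(-)$. The first thing to check is that $\alpha$ really is a well-defined exact endofunctor of $\module_R^\omega$: on $\module_R$ it is colimit-preserving, and since $M$ admits a right dual $M^\vee$ its right adjoint is $M^\vee\otimes_R(-)$, which is itself colimit-preserving, so $\alpha$ preserves compact objects. This is the only place the dualisability hypothesis on $M$ enters. Granting the identification $\category{C}_{h\bbN}\simeq\module_{\tensoralgaut{R}{M}}^\omega$ discussed below, \cref{thmintro:bhs_decomposition} yields $E(\tensoralgaut{R}{M})\simeq E(\module_R^\omega)_{h\bbN}\oplus NE_\alpha(\category{C})\oplus\NKbar{E}_\alpha(\category{C})$; since $E(\module_R^\omega)=E(R)$ is by convention the value of $E$ on the ring spectrum $R$, and setting $NE_M(R):=NE_\alpha(\category{C})$ and $\NKbar{E}_M(R):=\NKbar{E}_\alpha(\category{C})$, this is precisely the claimed splitting.

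The real content is therefore the identification $\category{C}_{h\bbN}\simeq\module_{\tensoralgaut{R}{M}}^\omega$, which I would establish by passing to $\ind$-completions. Since $\ind\colon\catperf\to\lpresentable$ sends the pushout defining $\category{C}_{h\bbN}$ to the corresponding pushout in $\lpresentable$, the category $\ind(\category{C}_{h\bbN})$ is the mapping torus of $\alpha$ on $\module_R$, i.e. the pushout of $\module_R\xleftarrow{\id\oplus\alpha}\module_R\oplus\module_R\xrightarrow{\id\oplus\id}\module_R$ in $\lpresentable$. Rewriting this pushout as a pullback of presentable categories and passing to right adjoints (using again $\alpha^R = M^\vee\otimes_R(-)$), the cospan becomes $\module_R\to\module_R\oplus\module_R\leftarrow\module_R$ with legs $P\mapsto(P,\,M^\vee\otimes_R P)$ and the diagonal $P\mapsto(P,P)$, whose pullback is the $\infty$-category of pairs $(P,\gamma)$ with $P$ an $R$-module and $\gamma\colon P\xrightarrow{\simeq}M^\vee\otimes_R P$ an equivalence. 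Under the adjunction $M\otimes_R(-)\dashv M^\vee\otimes_R(-)$ such a $\gamma$ is equivalently a map $\delta\colon M\otimes_R P\to P$ for which the canonical map $P\to\colim\bigl(P\to M^\vee\otimes_R P\to(M^\vee)^{\otimes 2}\otimes_R P\to\cdots\bigr)$ is an equivalence; that is, $\ind(\category{C}_{h\bbN})$ is the category of those left $T_R(M)$-modules on which $M$ acts invertibly. Comparing with the displayed colimit presentation of $\tensoralgaut{R}{M}$ — whose transition maps $\coev$ encode precisely these inverses — identifies $\ind(\category{C}_{h\bbN})$ with $\module_{\tensoralgaut{R}{M}}$, and passing to compact objects gives $\category{C}_{h\bbN}\simeq\module_{\tensoralgaut{R}{M}}^\omega$, both sides being the categories of compact objects in equivalent compactly generated stable $\infty$-categories.

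The same specialisation applied to \cref{thmintro:splitting_twisted_endomorphisms} produces, as a companion, the non-localised statement $E(T_R(M))\simeq E(R)\oplus NE_M(R)$: in that case $\End_{\alpha^R}(\ind(\category{C}))^\omega$ becomes $\module_{T_R(M)}^\omega$ and the functor $\freeend{}$ corresponds to $T_R(M)\otimes_R(-)$, recovering the tensor-algebra splitting of Land--Tamme and, before them, of Waldhausen.

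The step I expect to be the main obstacle is the identification $\category{C}_{h\bbN}\simeq\module_{\tensoralgaut{R}{M}}^\omega$ itself. Since $\alpha$ need not be invertible, the mapping torus is genuinely not just a category of $\alpha$-twisted automorphisms; the care lies in (i) matching the abstract pushout — equivalently the pullback of presentable categories — with the explicit ring $\tensoralgaut{R}{M}$ built from the right dual $M^\vee$, and (ii) the finiteness bookkeeping, namely checking that the compact objects of the $\ind$-completed mapping torus are exactly $\module_{\tensoralgaut{R}{M}}^\omega$ and not a larger subcategory. Once this is in place, \cref{thmintro:bhs_decomposition} does the rest.
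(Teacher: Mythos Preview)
Your proposal is correct and follows essentially the same strategy as the paper: specialise \cref{thmintro:bhs_decomposition} to $\category{C}=\module_R^\omega$ with $\alpha=M\otimes_R(-)$, and identify $(\module_R^\omega)_{h\bbN}$ with $\module_{\tensoralgaut{R}{M}}^\omega$. The paper likewise passes to $\ind$-categories and right adjoints to obtain $\category{C}_{h\bbN}\simeq\aut_{\alpha^R}(\ind(\category{C}))^\omega$ (this is \cref{lem:identification_mapping_torus_twisted_automorphisms}), which is exactly your category of pairs $(P,\gamma\colon P\xrightarrow{\simeq}M^\vee\otimes_R P)$.

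The one place the paper's execution differs from yours is the final identification $\aut_{\alpha^R}(\module_R)\simeq\module_{\tensoralgaut{R}{M}}$. Rather than arguing, as you do, that this is the full subcategory of $T_R(M)$-modules on which $M$ acts invertibly and then matching with the colimit presentation of $\tensoralgaut{R}{M}$, the paper observes (\cref{lem:twisted_endomorphisms_compactly_generated}) that $\aut_{\alpha^R}(\module_R)$ is compactly generated by the single object $\loc{\alpha^R}\freeend{\alpha^R}(R)$, invokes Schwede--Shipley, and then \emph{defines} $\tensoralgaut{R}{M}$ as the endomorphism ring of that generator; the explicit colimit formula is derived afterwards from \cref{constr:localisation}. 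This sidesteps the step you flag as the main obstacle (matching the abstract pushout to the concrete ring and the finiteness bookkeeping on compact objects), at the cost of making the formula for $\tensoralgaut{R}{M}$ a computation rather than a definition. Your route is more hands-on and makes the module-theoretic meaning of $\tensoralgaut{R}{M}$ transparent; the paper's route is shorter and avoids having to directly verify that the two descriptions of the localised tensor algebra agree.
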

In the case where $R$ is discrete and $M$ is the bimodule $R$ with trivial left $R$-module structure and right $R$-module structure coming from a pure embedding $\alpha \colon R \xrightarrow{} R$, the localised tensor algebra $\tensoralgaut{R}{M} \simeq R_\alpha\{t^{\pm 1}\}$ identifies with Waldhausen's generalised Laurent extension \cite{Waldhausen78}.
It is the universal ring containing $R$ and an invertible element $t$ which satisfies $t r = \alpha(r) t$.
If $\alpha$ is an isomorphism, this is the classical ring of twisted Laurent polynomials given by $\bigoplus_{n \in \bbZ} R t^n$ with multiplication $rt^m \cdot st^n = r \alpha^m(s) t^{m+n}$.

As another application, we can use \cref{thmintro:bhs_decomposition} to obtain the following splitting for Waldhausen's (finitely dominated) $A$-theory of mapping tori.
\begin{alphThm}\label{thmintro:A_theory_splitting}
    Let $\alpha \colon X \to X$ be a selfmap of a space.
    Then there is a splitting
    \begin{equation*}
        A(X_{h \bbN}) \simeq \tau_{\ge 0}(\bbA(X)_{h \bbN}) \oplus NA_\alpha(X) \oplus \NKbar{A}_{\alpha}(X).
    \end{equation*}
\end{alphThm}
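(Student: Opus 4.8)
The plan is to deduce the statement from \cref{thmintro:bhs_decomposition} by exhibiting $A$-theory of a space as the $K$-theory of a category of the required form. Recall that Waldhausen's (finitely dominated) $A$-theory is computed by the category $\category{L}_X \coloneqq \func(X, \spectra)^\omega$ of compact local systems of spectra on $X$: one has $\bbA(X) \simeq K(\category{L}_X)$ for nonconnective $K$-theory, and $A(X) \simeq \tau_{\ge 0}\bbA(X)$ (when $X$ is connected and pointed, $\category{L}_X \simeq \module_{\sphere[\Omega X]}^\omega$, and this is the usual identification). A selfmap $\alpha \colon X \to X$ induces the left Kan extension functor $\alpha_! \colon \func(X,\spectra) \to \func(X,\spectra)$; since $\alpha_!$ is left adjoint to the colimit-preserving restriction functor $\alpha^\ast$, it preserves compact objects, so we obtain an exact endofunctor $\alpha_! \colon \category{L}_X \to \category{L}_X$ of an idempotent complete stable category, and the induced map $K(\alpha_!)$ on $K$-theory is precisely the functoriality $\bbA(\alpha)$ of $A$-theory.

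The geometric input is the identification $\category{L}_{X_{h\bbN}} \simeq (\category{L}_X)_{h\bbN}$. The mapping torus $X_{h\bbN}$ is the homotopy coequaliser of $\id, \alpha \colon X \rightrightarrows X$, equivalently the pushout in $\spc$ of $X \xleftarrow{\id \sqcup \alpha} X \sqcup X \xrightarrow{\id \sqcup \id} X$. Applying $\func(-,\spectra)$ and using descent for local systems of spectra, together with the fact that for a map of spaces $f$ the left Kan extension $f_!$ is left adjoint to restriction $f^\ast$, one identifies $\func(X_{h\bbN},\spectra)$ with the pushout in $\lpresentable$ of
\begin{equation*}
    \func(X, \spectra) \xleftarrow{(P,Q) \mapsto P \oplus \alpha_!Q} \func(X,\spectra) \oplus \func(X, \spectra) \xrightarrow{(P,Q) \mapsto P \oplus Q} \func(X, \spectra).
\end{equation*}
This is exactly $\ind$ applied to the span defining $(\category{L}_X)_{h\bbN}$; since $\ind$ preserves pushouts and the pushout in $\lpresentable$ of compactly generated categories along compact-preserving functors is the $\ind$-completion of the pushout of their compact objects, restricting to compact objects yields $\category{L}_{X_{h\bbN}} = \func(X_{h\bbN},\spectra)^\omega \simeq (\category{L}_X)_{h\bbN}$. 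Such van Kampen statements for parametrised spectra are standard, and a version should be isolated as a lemma in the body.

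Granting this, the proof concludes formally. Applying \cref{thmintro:bhs_decomposition} with $E = K$ the nonconnective $K$-theory, $\category{C} = \category{L}_X$ and the endofunctor $\alpha_!$ gives
\begin{equation*}
    \bbA(X_{h\bbN}) \simeq K\big((\category{L}_X)_{h\bbN}\big) \simeq \bbA(X)_{h\bbN} \oplus NK_{\alpha_!}(\category{L}_X) \oplus \NKbar{K}_{\alpha_!}(\category{L}_X),
\end{equation*}
where $\bbA(X)_{h\bbN}$ is the mapping torus of $\bbA(\alpha) = K(\alpha_!)$. Setting $NA_\alpha(X) \coloneqq \tau_{\ge 0}NK_{\alpha_!}(\category{L}_X)$ and $\NKbar{A}_\alpha(X) \coloneqq \tau_{\ge 0}\NKbar{K}_{\alpha_!}(\category{L}_X)$ (these coincide with the categorical $\Nil$-terms when the latter are already connective, as in Waldhausen's setting), passing to connective covers and using $A(Y) \simeq \tau_{\ge 0}\bbA(Y)$ together with the fact that $\tau_{\ge 0}$ commutes with finite direct sums yields the asserted splitting.

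I expect the main obstacle to be the categorical identification $\category{L}_{X_{h\bbN}} \simeq (\category{L}_X)_{h\bbN}$: it requires the correct bookkeeping of adjoints — it is the covariant pushforward $\alpha_!$, not the pullback $\alpha^\ast$, that appears in the mapping torus, and one must verify it preserves compactness — as well as commuting $\ind$-completion past the defining pushout in $\catperf$. A subsidiary point to pin down is the precise comparison between Waldhausen's finitely dominated $A$-theory and the nonconnective $K$-theory of $\category{L}_X$, and whether the $\Nil$-terms genuinely require the connective truncation; both are classical but should be cited carefully.
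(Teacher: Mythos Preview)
Your proposal is correct and follows essentially the same approach as the paper: apply \cref{thmintro:bhs_decomposition} with $E=K$ and $\category{C}=\func(X,\spectra)^\omega$, using the identification $\func(X_{h\bbN},\spectra)^\omega \simeq (\func(X,\spectra)^\omega)_{h\bbN}$, and then pass to connective covers. The only difference is that where you unpack this identification via an explicit descent/pushout argument, the paper obtains it in one line by observing that the assignment $Y \mapsto \category{C}_Y \coloneqq \colim_Y \category{C}$ in $\catperf$ preserves colimits (the ``composition formula for colimits''), so $\category{C}_{X_{h\bbN}} \simeq (\category{C}_X)_{h\bbN}$ immediately.
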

We actually prove a version of this for a nonconnective delooping $\bbA$ of $A$-theory and obtain \cref{thmintro:A_theory_splitting} by passing to connective covers.
In particular, on $1$-connective covers one has $\tau_{\ge 1}(\bbA(X)_{h \bbN}) \simeq \tau_{\ge 1}(A(X)_{h \bbN})$.
We also provide a guide to computations of the $A$-theoretic $\Nil$-terms by envoking the work of B\"okstedt-Hsiang-Madsen \cite{Bokstedt-Hsiang-Madsen} on the topological cyclic homology of spaces.

\subsection*{Outline of the proof}

Most of the results in the literature only consider the case where the twist $\alpha$ is an equivalence.
The proof usually follows the classical algebraic geometric approach and first establishes a splitting for versions of the (twisted) projective line over $R$ glued from (twisted) polynomial algebras $R_\alpha[t]$ and $R_{\alpha^{-1}}[t]$ along $R_\alpha[t^{\pm 1}]$.
This argument can not be extended to our setting of noninvertible twists.
We follow an approach which is more similar to Waldhausen's splitting for generalised Laurent extensions \cite{Waldhausen78}.

The main tool we use in the proof is Land-Tamme's theory of $K$-theory of pushouts \cite[Theorem 3.2]{LandTamme23}.
Starting from the pushout square
\begin{equation*}
\begin{tikzcd}
    \category{C} \oplus \category{C} \ar[r, "\id \oplus \id"] \ar[d, "\id \oplus \alpha"] & \category{C} \ar[d]\\
    \category{C} \ar[r] & \category{C}_{h \bbN}
\end{tikzcd}
\end{equation*}
defining the categorical mapping torus, Land-Tamme show how to replace the upper left corner by a different category $\im(\category{C} \oplus \category{C})$ such that the resulting commutative square becomes a pushout after applying the localising invariant $E$.
We show that $\im(\category{C} \oplus \category{C})$ admits an orthogonal decomposition through the two $\Nil$-categories $\Nil_\alpha(\category{C})$ and $\Nilbar_\alpha(\category{C})$.
By employing the splitting \cref{eq:splitting_k_theory_nil} and carefully analysing the resulting pushout square, we arrive at the claimed splitting.

\subsection*{Structure of the article}

We begin by recalling the necessary background on Land-Tamme's $K$-theory of pushouts in \cref{sec:K_theory_pushouts}.
Next, in \cref{sec:twisted_automorphisms,sec:nilpotent_endomorphisms} we introduce the categories of twisted endomorphisms, automorphisms and nilpotent endomorphisms and study their basic properties, which will be needed later on.
\cref{sec:proof_decomposition} contains a proof of \cref{thmintro:bhs_decomposition} and \cref{sec:k_theory_twisted_endomorphisms} constains a proof of \cref{thmintro:splitting_twisted_endomorphisms}.
In \cref{sec:regularity} we construct a $t$-structure on the category of twisted endomorphisms and prove \cref{thmintro:regularity}.
In the first half of \cref{sec:applications} we apply these results to obtain splittings for the $K$-theory tensor algebras and various kinds of (twisted) polynomial rings, as well as for $A$-theory.
In the second half \cref{sec:NA_calculation} we express some $A$-theoretic $\Nil$-terms through free loop spaces.

\subsection*{Conventions}

This article is written in the language of $\infty$--categories as set down in \cite{HTT,HA}, and so by a \textit{category} we will always mean an $\infty$--category unless stated otherwise.
We also use the following notations throughout:
\begin{itemize}
    \item $\cat$ denotes the category of small categories and $\spc \subseteq \cat$ the category of spaces.
    \item $\catperf$ denotes the category of small idempotent complete stable categories (sometimes called perfect categories) and exact functors.
    \item We denote by $\Hom_\category{C}(x,y)$ the mapping space between objects in a category $\category{C}$.
    If $\category{C}$ is stable, $\hom_\category{C}(x,y)$ denotes their mapping spectrum.
    \item $E \colon \catperf \to \category{E}$ denotes a localising invariant with values in a stable category.
\end{itemize}

\subsection*{Acknowledgements}

We are grateful to Andrea Bianchi, Markus Land, Wolfgang L\"uck, Maxime Ramzi, Victor Saunier and Christoph Winges for numerous helpful conversations and encouragements on this project.
This work will constitute a part of the PhD theses of both authors.
Both authors are supported by the Max Planck Institute for Mathematics in Bonn.

\section{Recollections on K-theory of pushouts} \label{sec:K_theory_pushouts}

In general, $K$-theory (or more general localising invariants) does not preserve pushouts of stable categories.
This defect is studied in Land-Tamme's article \cite[Section 3]{LandTamme23}.
We summarise the results essential for our case, 
another exposition can also be found in \cite[Section 4]{BurklundLevy23}. 
Let us begin by recalling the notion of a localising invariant.

\begin{recollect}[Localising invariants]
    A \textit{Karoubi sequence} $\category{A} \xrightarrow{i} \category{B} \xrightarrow{p} \category{C}$ in $\catperf$ is a sequence in $\catperf$ together with a nullhomotopy $h \colon pi \simeq 0$ which exhibits the sequence as both a fiber and a cofiber sequence in $\catperf$, see \cite[Appendix A]{Hermitian2} for a discussion.
    For us, a \textit{localising invariant} (sometimes also called Karoubi localising invariant) is a functor $E \colon \catperf \to \category{E}$ with values in a stable category $\category{E}$ with the following two properties. 
    It satisfies $E(0) \simeq 0$ and for any Karoubi sequence $\category{A} \xrightarrow{i} \category{B} \xrightarrow{p} \category{C}$ in $\catperf$ the nullhomotopy $E(h) \colon E(p) E(i) \simeq E(0) \simeq 0$ exhibits the sequence $E(\category{A}) \to E(\category{B}) \to E(\category{C})$ as a fiber sequence in $\category{E}$.
    Equivalent characteristations of this notion can be found in \cite{HLS23}.
    Examples of localising invariants include nonconnective algebraic $K$-theory, topological Hochschild homology and topological cyclic homology, see \cite{BGT13} for a proof.
\end{recollect}

Essential for Land-Tamme's construction is the concept of \textit{partially lax pullbacks} as studied studied in \cite{Tamme18}. Let us recall their definition.
\begin{cons}[Partially lax pullback]
    For a cospan $\category{B} \xrightarrow{f} \category{D} \xleftarrow{g} \category{C}$ in $\cat$, the partially lax pullback $\category{B} \artimes \category{C}$ is the category with objects given by triples $(b, c, r)$ of objects $b \in \category{B}$, $c \in \category{C}$ and a morphism $r \colon f(b) \to g(c)$.
    Formally, it can be defined as the pullback
    \begin{equation*}
    \begin{tikzcd}
        \category{B} \artimes \category{C} \ar[r] \ar[d] \ar[dr, phantom, "\lrcorner", very near start] & \category{D}^{\Delta^1} \ar[d]
        \\ \category{B} \times \category{C} \ar[r, "{(f, g)}"] & \category{D} \times \category{D}.
    \end{tikzcd}
    \end{equation*}
    Mapping spaces in $\category{B} \artimes \category{C}$ are given by the pullback
    \begin{equation}\label[diagram]{diag:mapping_anima_lax_pullback}
    \begin{tikzcd}
        \Hom_{\category{B} \artimes \category{C}}((b, c, r),(b', c', r')) \ar[r] \ar[d] \ar[dr, phantom, "\lrcorner", very near start]
        & \Hom_{\category{C}}(c, c') \ar[d, "r^* \circ g"] \\
        \Hom_{\category{B}}(b, b') \ar[r, "r'_* \circ f"] & \Hom_{\category{D}}(f(b), g(c')).
    \end{tikzcd}
    \end{equation}
    Note that the pullback $\category{B} \times_{\category{D}} \category{C} \subseteq \category{B} \artimes \category{C}$ identifies with the full subcategory on objects $(b, c, r)$ for which $r \colon f(b) \to g(c)$ is an equivalence.
\end{cons}

\begin{obs}
    If $\category{B} \xrightarrow{f} \category{D} \xleftarrow{g} \category{C}$ is a diagram in $\catperf$, then the partially lax pullback $\category{B} \artimes \category{C}$ is again a perfect category.
    One essential property of the partially lax pullback is that $\category{B}$ and $\category{C}$, included via the maps $j_1 \colon \category{B} \to \category{B} \artimes \category{C}, b \mapsto (b,0,0)$ and $j_2 \colon \category{C} \to \category{B} \artimes \category{C}, c \mapsto (0,c,0)$, form a semiorthogonal decomposition of $\category{B} \artimes \category{C}$.
    In partiuclar, the projection $\category{B} \artimes \category{C} \to \category{B} \times \category{C}$ becomes an equivalence after applying any localizing invariant.
\end{obs}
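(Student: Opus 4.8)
The plan is to establish the three assertions in turn — that $\category{B}\artimes\category{C}$ again lies in $\catperf$, that $j_1$ and $j_2$ realise a semiorthogonal decomposition of it, and that the claim about localising invariants then follows formally — with essentially all of the work being a bookkeeping exercise with the two pullback squares defining $\category{B}\artimes\category{C}$ and its mapping anima.

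For membership in $\catperf$: the displayed pullback exhibits $\category{B}\artimes\category{C}$ as the pullback in $\cat$ of $\category{B}\times\category{C}\xrightarrow{(f,g)}\category{D}\times\category{D}$ and the endpoint evaluation $\category{D}^{\Delta^1}\to\category{D}\times\category{D}$, a diagram of small idempotent complete stable categories and exact functors. A pullback in $\cat$ along exact functors of stable categories is again stable, with exact legs, and its (co)limits are computed componentwise in $\category{B}$, $\category{C}$ and $\category{D}^{\Delta^1}$ \cite{HA} — where the fibre of an arrow of $\category{D}^{\Delta^1}$, viewed as a commuting square in $\category{D}$, is obtained by taking vertical fibres. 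It is moreover idempotent complete since a retract in such a pullback restricts to compatible retracts in the three factors, which split. I will reuse the componentwise description of fibres below.

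For the semiorthogonal decomposition: reading off \cref{diag:mapping_anima_lax_pullback} one checks that $p_{\category{B}}\colon(b,c,r)\mapsto b$ has $j_1$ as a fully faithful right adjoint and $p_{\category{C}}\colon(b,c,r)\mapsto c$ has $j_2$ as a fully faithful left adjoint; for instance $\Hom_{\category{B}\artimes\category{C}}((b,c,r),j_1 b')$ is, by \cref{diag:mapping_anima_lax_pullback}, the pullback of $0\to 0\leftarrow\Hom_{\category{B}}(b,b')$, hence $\Hom_{\category{B}}(b,b')$. In particular $j_1$ and $j_2$ are fully faithful, and $\Hom_{\category{B}\artimes\category{C}}(j_2 c, j_1 b)$ is the pullback of $0\to 0\leftarrow 0$, hence contractible. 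Next I would take the unit $\eta\colon\id\Rightarrow j_1 p_{\category{B}}$ of the adjunction $p_{\category{B}}\dashv j_1$, whose value at $(b,c,r)$ is the evident morphism to $(b,0,0)$, and compute its fibre componentwise: the $\category{B}$-component is $\fib(\id_b)=0$, the $\category{C}$-component is $\fib(c\to 0)=c$, and the structure map is the unique map $f(0)=0\to g(c)$, so $\fib(\eta_{(b,c,r)})=(0,c,0)=j_2 p_{\category{C}}(b,c,r)$, naturally. Together with the counit of $j_2\dashv p_{\category{C}}$ this yields a natural fibre sequence $j_2 p_{\category{C}}\to\id_{\category{B}\artimes\category{C}}\to j_1 p_{\category{B}}$ which, with the orthogonality and full faithfulness above, is precisely a semiorthogonal decomposition of $\category{B}\artimes\category{C}$ by the essential images of $j_1$ and $j_2$.

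For the conclusion: a semiorthogonal decomposition produces a split Karoubi sequence — here $\category{C}\xrightarrow{j_2}\category{B}\artimes\category{C}\xrightarrow{p_{\category{B}}}\category{B}$, split by $j_1$ — so applying $E$ yields a natural splitting $E(\category{B}\artimes\category{C})\simeq E(\category{B})\oplus E(\category{C})$ onto the images of $E(j_1)$ and $E(j_2)$; this uses only that a localising invariant sends a split Karoubi sequence to a split cofibre sequence \cite{BGT13}. The same applies to $\category{B}\times\category{C}$ via the evident split Karoubi sequence $\category{C}\to\category{B}\times\category{C}\to\category{B}$, and since the projection $\pr\colon\category{B}\artimes\category{C}\to\category{B}\times\category{C}$ satisfies $\pr j_1=(b\mapsto(b,0))$ and $\pr j_2=(c\mapsto(0,c))$, the map $E(\pr)$ is the identity of $E(\category{B})\oplus E(\category{C})$ under these identifications, hence an equivalence. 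The only non-formal ingredient is the cited behaviour of localising invariants on split Karoubi sequences, where the defining property of $E$ enters; the sole point requiring any care is the functoriality of the fibre sequence defining the semiorthogonal decomposition, which the adjunction $p_{\category{B}}\dashv j_1$ hands us for free.
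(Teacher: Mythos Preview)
The paper states this as an Observation without proof, so there is no paper argument to compare against directly; the authors evidently regard the claim as standard, and your verification is the natural one. Your argument is correct: the pullback description gives membership in $\catperf$ (more slickly, the forgetful functor $\catperf\to\cat$ creates limits), the mapping-space formula \cref{diag:mapping_anima_lax_pullback} immediately yields both adjunctions and the required orthogonality, and the fibre sequence $j_2 p_{\category{C}}\to\id\to j_1 p_{\category{B}}$ establishes generation. The deduction for localising invariants via the split Karoubi sequence is exactly what the paper's later \cref{rec:semiorthogonal_decomposition} records, so you are in line with the intended reasoning.
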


Now consider a span $\category{B} \xleftarrow{b} \category{A} \xrightarrow{c} \category{C}$ in $\catperf$ and let $E$ be a localising invariant.
The natural map $E(\category{B}) \amalg_{E(\category{A})} E(\category{C}) \to E(\category{B} \amalg_{\category{A}} \category{C})$ not an equivalence in general.
This defect can be measured by the following construction.

\begin{cons}[{$\odot$}-product]\label{cons:odot_product}
    Consider a lax commutative diagram
    \begin{equation}\label[diagram]{diag:semicommutative_square_motives}
    \begin{tikzcd}
        \category{A} \ar[r, "c"] \ar[d, "b"'] 
        & \category{C} \ar[d, "q"]
        \\ 
        \category{B} \ar[r, "p"] \ar[ur, Rightarrow, "f"]
        & \category{D}
    \end{tikzcd}
    \end{equation}
    in $\catperf$, meaning that $f \colon pb \to qc$ is a natural transformation.
    The transformation $f$ induces a map $\category{A} \to \category{B} \artimes \category{C}$ and Land-Tamme define the $\odot$-product $\category{B} \odot_{\category{A}}^{\category{D}} \category{C}$ as the cofiber of this map in $\catperf$.
    The map $\category{B} \artimes \category{C} \to \category{B} \odot_{\category{A}}^{\category{D}} \category{C}$ is a Verdier localisation (see e.g. \cite[Theorem I.3.3]{NikolausScholze18} for an explanation of this notion) and we denote by $\im(\category{A})$ its fiber.
    Equivalently, it is the full perfect subcategory of $\category{B} \artimes \category{C}$ generated by the image of $\category{A}$.
    By the definition of localising invariants we get a pushout diagram
    \begin{equation}\label{diag:motivic_pushout}
    \begin{tikzcd}
        E(\im(\category{A})) \ar[r] \ar[d] \ar[dr, phantom, "\ulcorner", very near end] & E(\category{C}) \ar[d] \\
        E(\category{B}) \ar[r] & E(\category{B} \odot_{\category{A}}^{\category{D}} \category{C}).
    \end{tikzcd}
    \end{equation}
\end{cons}

The difficulty in working with these construction lies in the identification of the categories $\im(\category{A})$ and $\category{B} \odot_{\category{A}}^{\category{D}} \category{C}$.

\begin{cons}[{$\odot$-product associated to a pushout}]\label{cons:odot_for_pushout}
    To actually come back to our original problem about the $K$-theory of pushouts, consider a span $\category{B} \xleftarrow{b} \category{A} \xrightarrow{c} \category{C}$ in $\catperf$.
    Associated to it, we can construct a diagram of the shape of \cref{diag:semicommutative_square_motives} as follows
    \begin{equation}\label{diag:semicommutative_square_motiveic_pushout}
    \begin{tikzcd}
        \category{A} \ar[r, "c"] \ar[d, "b", swap] 
        & \category{C} \ar[d, "b^*c_*"]
        \\ 
        \category{B} \ar[ur, Rightarrow, "b^* \eta_c"] \ar[r] 
        & \ind(\category{B}),
    \end{tikzcd}
    \end{equation}
    where $\category{B} \to \ind(\category{B})$ is the Yoneda embedding, $c_* \colon \ind(\category{C}) \to \ind(\category{A})$ is a right adjoint to $c^* = \ind(c)$ and $\eta_c$ denotes the unit of this adjunction.
\end{cons}

Now the main technical theorem in \cite{LandTamme23} shows that the $\odot$-product is actually a pushout in this situation.
\begin{thm}[{\cite[Theorem 3.2]{LandTamme23}}] \label{thm:motivic_pushout}
    In the situation of \cref{cons:odot_for_pushout}, the square
    \begin{equation*}
    \begin{tikzcd}
        \category{A} \ar[r, "c"] \ar[d, "b", swap] 
        & \category{C} \ar[d, "j_2"]
        \\ 
        \category{B} \ar[r, "\Omega j_1"] 
        & \category{B} \odot_{\category{A}}^{\ind(\category{B})} \category{C}
    \end{tikzcd}
    \end{equation*}
    is a pushout in $\catperf$.
\end{thm}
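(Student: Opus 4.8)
The plan is to show that the commutative square of the statement induces an equivalence $\Phi \colon \category{B} \amalg_{\category{A}} \category{C} \xrightarrow{\simeq} \category{B} \odot_{\category{A}}^{\ind(\category{B})} \category{C}$ in $\catperf$, the source being the honest pushout. Recall from \cref{cons:odot_product,cons:odot_for_pushout} that $\category{B} \odot_{\category{A}}^{\ind(\category{B})} \category{C}$ is the Verdier quotient $Q \colon \category{B} \artimes \category{C} \to \category{B} \odot_{\category{A}}^{\ind(\category{B})} \category{C}$ of the partially lax pullback $\category{B} \artimes \category{C}$ -- formed over $\ind(\category{B})$ along the Yoneda embedding and $q = b^{*}c_{*}$ -- by the thick subcategory $\im(\category{A})$ generated by the image of the comparison functor $\iota \colon \category{A} \to \category{B} \artimes \category{C}$, $a \mapsto (b(a), c(a), r_{a})$, where $r_{a}$ is the component at $a$ of the natural transformation $b^{*}\eta_{c}$.

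The first step is to isolate the two structural features of $\category{B} \artimes \category{C}$ that do the work: every object $(x,y,r)$ sits in a natural cofiber sequence $j_{2}(y) \to (x,y,r) \to j_{1}(x)$ classified by $r$, and $\hom_{\category{B} \artimes \category{C}}(j_{2}(y), j_{1}(x)) \simeq 0$ for all $x \in \category{B}$ and $y \in \category{C}$; together these are the semiorthogonal decomposition by $(j_{1}(\category{B}), j_{2}(\category{C}))$ recalled above. Applying $Q$ to the cofiber sequence attached to $\iota(a)$, which $Q$ kills, yields a natural equivalence $Q j_{1} \circ b \simeq \Sigma Q j_{2} \circ c$, i.e. $\Omega Q j_{1} \circ b \simeq Q j_{2} \circ c$ -- this is exactly the coherence making the square of the statement commute, and it produces $\Phi$. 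Essential surjectivity of $\Phi$ is then formal: $\category{B} \artimes \category{C}$ is generated under finite colimits and retracts by $j_{1}(\category{B})$ and $j_{2}(\category{C})$ by the cofiber sequence, $Q$ is essentially surjective, and $Q j_{1}(\category{B})$ and $Q j_{2}(\category{C})$ lie in the essential image of $\Phi$, which is therefore thick and contains a generating set.

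The heart of the matter is full faithfulness of $\Phi$, for which it suffices -- by the previous step -- to match mapping spectra between images of objects of $\category{B}$ and of $\category{C}$. Three of the four bivariant cases collapse using the vanishing $\hom_{\category{B} \artimes \category{C}}(j_{2}(y), j_{1}(x)) \simeq 0$ together with the defining cofiber sequence of the Verdier quotient, so the content lies entirely in $\hom(Q j_{1}(x), Q j_{2}(y))$, which by the calculus of fractions is a filtered colimit over the maps out of $j_{1}(x)$ with fiber in $\im(\category{A})$. Here the precise shape of the lax square becomes indispensable: because the lower-right corner is $\ind(\category{B})$ and $q = b^{*}c_{*}$ is assembled from the genuine adjunction $c^{*} \dashv c_{*}$ on $\ind$-categories, its unit and counit furnish a functorial bar-type resolution of $j_{1}(x)$ by objects of $\im(\category{A})$, obtained by applying $\iota$ to the simplicial object built from the comonad $c^{*}c_{*}$ on $\ind(\category{C})$ and transporting along $b^{*}$. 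Plugging this resolution into the colimit should express $\hom(Q j_{1}(x), Q j_{2}(y))$ purely in terms of $\category{A}$, $\category{B}$, $\category{C}$, $b$, $c$ and $\eta_{c}$, and the remaining step is to recognise the answer as the mapping spectrum in the honest pushout -- the categorical upgrade of Tamme's excision mechanism \cite{Tamme18}. Equivalently, one may pass to presentable categories, where $\category{B} \amalg_{\category{A}} \category{C}$ is the subcategory of compact objects of the pullback $\ind(\category{B}) \times_{\ind(\category{A})} \ind(\category{C})$ along the right adjoints $b_{*}$, $c_{*}$, and the task becomes identifying this pullback with the $\odot$-quotient.

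I expect the main obstacle to be exactly this identification -- showing $\im(\category{A})$ is large enough that killing it enforces the pushout relation and nothing more. Passing to $\ind(\category{B})$ is precisely what upgrades the motivic statement of \cref{diag:motivic_pushout} (the square is cocartesian after any localising invariant) to a genuine pushout of categories, and controlling the resulting bar resolution -- its convergence, and its compatibility with idempotent completion -- is where the real work sits. A convenient way to organise it is to first settle a universal case, for instance $c = \id_{\category{C}}$ (so that the $\odot$-quotient ought to collapse onto $\category{B}$) or $\category{A} \hookrightarrow \category{B}$ a fully faithful inclusion, and then bootstrap to the general span by naturality.
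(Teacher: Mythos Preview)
The paper does not contain a proof of this statement: it is quoted verbatim as \cite[Theorem 3.2]{LandTamme23} and used as a black box, so there is no ``paper's own proof'' to compare against. Your proposal is therefore not a reconstruction of anything in the present article but an attempt to reprove the cited Land--Tamme result.

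As a sketch of that external result, your outline captures the correct architecture --- the semiorthogonal decomposition of $\category{B} \artimes \category{C}$, the induced commutativity datum after quotienting by $\im(\category{A})$, essential surjectivity from the generators, and the reduction of full faithfulness to the single bivariant mapping spectrum $\hom(Qj_1(x), Qj_2(y))$. However, the proposal is explicitly incomplete at exactly the point that matters: you write that plugging in a bar resolution ``should express'' the mapping spectrum correctly and that ``the remaining step is to recognise the answer,'' then flag convergence and idempotent-completion issues as ``where the real work sits.'' That is an honest assessment, but it means what you have submitted is a plan rather than a proof. The decisive computation --- that the calculus-of-fractions colimit for $\hom(Qj_1(x), Qj_2(y))$ actually agrees with the mapping spectrum in the genuine pushout --- is not carried out, and the suggested bootstrap via special cases ($c = \id$ or $b$ fully faithful) is not shown to suffice for the general span. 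If you intend this as a self-contained argument, that gap must be closed; otherwise, citing \cite{LandTamme23} as the paper does is the appropriate move.
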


\section{The main theorem}

We will now turn to our main subsect of interest: For an exact functor $\alpha \colon \category{C} \rightarrow \category{C}$ we study the category of twisted automorphisms $\category{C}_{h\bbN}$ and prove that $E(\category{C}_{h\bbN})$ naturally splits for every localising invariant $E$. As useful companions, we will study categories of twisted endomorphisms and the notion of nilpotence for twisted endomorphisms.

\subsection{Twisted endomorphisms and automorphisms}\label{sec:twisted_automorphisms}

This subsection contains the construction of twisted endomorphism and automorphism categories and some of their basic properties, which will be useful in applications.
Let us begin by recalling the construction of lax equalisers, whose basic properties can be found in \cite[Proposition II.1.5]{NikolausScholze18}.
\begin{cons}[Lax equaliser]
    Given two functors $f, g \colon \category{C} \to \category{D}$, the lax equaliser $\laxeq(f, g)$ consists of pairs $(x, r)$ of an object $x \in \category{C}$ and a map $r \colon f(x) \to g(x)$ in $\category{D}$.
    Formally, it is defined as the pullback
    \begin{equation}\label{diag:pullback_lax_equlaiser}
    \begin{tikzcd}
        \laxeq(f,g) \ar[r] \ar[d] \ar[dr, phantom, "\lrcorner", very near start] & \category{D}^{\Delta^1} \ar[d]
        \\ \category{C} \ar[r, "{(f, g)}"] & \category{D} \times \category{D}
    \end{tikzcd}
    \end{equation}
    from which one obtains the formula for mapping spaces as the equaliser
    \begin{equation}\label{eq:mapping_anima_lax_equaliser}
        \Hom_{\laxeq(f,g)}((x,r), (y, s)) \simeq \equaliser\left(s_* f, r^* g \colon \Hom_{\category{C}}(x, y) \to \Hom_{\category{D}}(f(x), g(y))\right).
    \end{equation}
\end{cons}

From now on, consider a category $\category{C}$ together with an endofunctor $\alpha \colon \category{C} \xrightarrow{} \category{C}$.

\begin{defn}[Twisted endormophisms and automorphisms]\label{def:twisted_endomorphism}
    We define the category $\End_\alpha(\category{C})$ of \textit{twisted endomorphisms} as the lax equaliser $\End_\alpha(\category{C}) = \laxeq(\id, \alpha \colon \category{C} \to \category{C})$.
    Objects consist of pairs $(x, f)$ of objects $x \in \category{C}$ and maps $f \colon x \to \alpha(x)$ in $\category{C}$.
    Similarly, define $\Endbar_\alpha(\category{C}) = \laxeq(\alpha, \id \colon \category{C} \to \category{C})$ with objects pairs $(x,f)$ of objects $x \in \category{C}$ and maps $f \colon \alpha(x) \to x$ in $\category{C}$.
    
    We also define the category $\aut_\alpha(\category{C})$ of \textit{twisted automorphisms} as the equaliser $\aut_\alpha(\category{C}) = \equaliser(\id, \alpha \colon \category{C} \to \category{C})$.
    Objects consist of pairs $(x, f)$ of objects $x \in \category{C}$ and equivalences $f \colon x \xrightarrow{\simeq} \alpha(x)$ in $\category{C}$.
\end{defn}

We will relate these notions to categories of modules over tensor algebras and twisted polynomial rings in \cref{cons:tensor_algebras}.

\begin{obs}
    The map $\category{D} \xrightarrow{\id_(-)} \category{D}^{\Delta^1} $ induces a natural map $\aut_\alpha(\category{C}) \subseteq \End_\alpha(\category{C})$, identifying the source as the full subcategory on all pairs $(x,f)$ for which $f \colon x \to \alpha(x)$ is an equivalence.

    Denote by $\fgt \colon \End_\alpha(\category{C}) \to \category{C}, (x,f) \mapsto x$ the forgetful funtor.
    It is conservative by \cite[Proposition II.1.5 (ii)]{NikolausScholze18}.
    If $\category{C}$ is a perfect (resp. presentable) category and $\alpha$ is exact (resp. a left adjoint, resp. a right adjoint), then $\End_\alpha(\category{C})$ and $\aut_\alpha(\category{C})$ are perfect (resp. presentable) categories and the functors $\aut_\alpha(\category{C}) \to \End_\alpha(\category{C})$ and $\fgt \colon \End_\alpha(\category{C}) \to \category{C}$ are exact (resp. a left adjoint, resp. a right adjoint).
    This directly follows from the fact that the forgetful functors $\catperf, \lpresentable, \rpresentable \to \cat$ preserve limits.
\end{obs}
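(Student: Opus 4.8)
The plan is to unwind both constructions into pullbacks in $\cat$ and then transport those pullbacks into $\catperf$, $\lpresentable$, and $\rpresentable$ using that the corresponding forgetful functors preserve limits; recall that $\End_\alpha(\category{C})$ is by definition the pullback of $(\id,\alpha)\colon\category{C}\to\category{C}\times\category{C}$ against the evaluation-at-source-and-target functor $\category{C}^{\Delta^1}\to\category{C}\times\category{C}$ as in \cref{diag:pullback_lax_equlaiser}, while $\aut_\alpha(\category{C})$ is the analogous pullback against the diagonal $\category{C}\to\category{C}\times\category{C}$. For the full-subcategory claim I would use that the constant-diagram functor $\category{D}\to\category{D}^{\Delta^1}$ is fully faithful --- since $\Delta^1$ is weakly contractible --- with essential image exactly the full subcategory of $\category{D}^{\Delta^1}$ on the equivalences (an arrow $a\to b$ is equivalent in $\category{D}^{\Delta^1}$ to a constant arrow precisely when it is an equivalence). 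This functor sits in a triangle over $\category{D}\times\category{D}$ with the source-target projection and the diagonal, so base change along $(\id,\alpha)$ produces the comparison functor $\aut_\alpha(\category{C})\to\End_\alpha(\category{C})$; since base change preserves fully faithful functors and identifies the essential image of a base change with the preimage of the essential image, this comparison is fully faithful onto the pairs $(x,f)$ with $f\colon x\to\alpha(x)$ an equivalence. Conservativity of $\fgt$ I would cite from \cite[Proposition II.1.5]{NikolausScholze18}; alternatively, a morphism of $\laxeq(\id,\alpha)$ whose underlying morphism in $\category{C}$ is an equivalence automatically has an equivalence as its $\category{C}^{\Delta^1}$-component, because $\id$ and $\alpha$ send equivalences to equivalences.

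Next I would treat the structural statements. For the perfect case I would first record that $\category{C}^{\Delta^1}=\func(\Delta^1,\category{C})$ is idempotent complete stable whenever $\category{C}$ is (stability and idempotent-completeness pass to functor categories because the relevant (co)limits are formed pointwise), and that the two evaluation functors $\category{C}^{\Delta^1}\to\category{C}$, the diagonal $\category{C}\to\category{C}\times\category{C}$, the constant-diagram functor $\category{C}\to\category{C}^{\Delta^1}$, and $(\id,\alpha)$ are all exact once $\alpha$ is exact. Hence the two defining pullback squares together with the comparison map form a diagram in $\catperf$, and since $\catperf\hookrightarrow\cat$ preserves limits the pullbacks computed in $\catperf$ coincide with those in $\cat$; thus $\End_\alpha(\category{C})$ and $\aut_\alpha(\category{C})$ are perfect and the cone legs $\fgt$ and $\aut_\alpha(\category{C})\to\End_\alpha(\category{C})$ are exact. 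For the presentable variants I would rerun this argument with $\catperf$ replaced by $\lpresentable$ (resp.\ $\rpresentable$): if $\category{C}$ is presentable then so is $\func(\Delta^1,\category{C})$, the evaluation functors and the diagonal are both left and right adjoints (being accessible and preserving all limits and colimits), and the constant-diagram functor together with $(\id,\alpha)$ are left (resp.\ right) adjoints whenever $\alpha$ is, after which one invokes that $\lpresentable\hookrightarrow\cat$ and $\rpresentable\hookrightarrow\cat$ preserve limits.

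The whole argument is bookkeeping rather than substance, and the only step I expect to demand real attention --- the nearest thing to a genuine obstacle --- is checking, in each of the three variance conventions, that every auxiliary functor attached to $\category{C}^{\Delta^1}$ (the two evaluations, the diagonal, and the constant-diagram functor) is of the required type, so that the pullback genuinely is formed inside $\catperf$, $\lpresentable$, or $\rpresentable$; granting that, the conclusion follows formally from the cited limit-preservation of these inclusions into $\cat$.
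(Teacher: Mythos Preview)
Your proposal is correct and follows essentially the same line as the paper: the observation itself records the justification inline, namely that the forgetful functors $\catperf,\lpresentable,\rpresentable\to\cat$ preserve limits, so the defining pullbacks for $\End_\alpha(\category{C})$ and $\aut_\alpha(\category{C})$ may be computed in the respective structured settings. You have simply spelled out in detail what the paper compresses into one sentence, including the identification of $\aut_\alpha(\category{C})$ as a full subcategory via the constant-diagram inclusion $\category{D}\hookrightarrow\category{D}^{\Delta^1}$.
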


\begin{constr}[Twisted powers]
    \label{constr:twisted_powers}
    The functos $\alpha$ induces an endofunctor $\overline{\alpha} \colon \End_\alpha(\category{C}) \to \End_\alpha(\category{C}), (x, f) \mapsto (\alpha x, \alpha f)$, which formally can be constructed using the functoriality of lax equalisers:
    There is a natural transformation $\shifttransformation{1} \colon \id \to \overline{\alpha}$ of endofunctors of $\End_\alpha(\category{C})$ given on objects by the commutative square
    \begin{equation*}
    \begin{tikzcd}
        x \ar[r, "f"] \ar[d, "f"] 
        & \alpha x \ar[d, "\alpha f"] \\
        \alpha x \ar[r, "\alpha f"]
        & \alpha^2 f,
    \end{tikzcd}
    \end{equation*}
    see e.g. \cite[Construction II.5.2]{NikolausScholze18} for a formal construction.
    We can define the $n$-fold composite $[n] = ({\overline{\alpha}^{n-1}}[1]) \circ ({\overline{\alpha}^{n-2}}[1]) \dots \circ [1] \colon \id \to \overline{\alpha}^n$ of $[1]$.
    We denote the image of the map $[n] \colon (x,f) \to \overline{\alpha}^n(x,f)$ in $\category{C}$ by
    \begin{equation*}
        \twistedpower{f}{n} \coloneqq \left( x \xrightarrow{f} \alpha(x) \xrightarrow{\alpha(f)} \dots \xrightarrow{\alpha^{n-1}(f)} \alpha^n(f) \right).
    \end{equation*}
\end{constr}

\begin{constr}[Localisation and free twisted endomorphisms]\label{constr:localisation}\label{constr:free_endomorphism}
    Suppose that $\category{C}$ is presentable and that $\alpha \colon \category{C} \to \category{C}$ is a right adjoint.
    Then the inclusion $\aut_\alpha(\category{C}) \subseteq \End_\alpha(\category{C})$ admits a left adjoint $\loc{\alpha} \colon \End_\alpha(\category{C}) \to \aut_\alpha(\category{C})$.
    It is explicitly given by the formula
    \begin{equation*}
        \loc{\alpha} = \colim \left( \id \xrightarrow{[1]} \overline{\alpha} \xrightarrow{\overline{\alpha} [1]} \overline{\alpha}^2 \xrightarrow{\overline{\alpha}^2 [1]} \dots \right)
    \end{equation*}
    as shown in the proof of \cite[Proposition II.5.3]{NikolausScholze18}.
    Similarly, the forgetful functor $\fgt \colon \End_\alpha(\category{C}) \to \category{C}$ admits a left adjoint $\freeend{\alpha} \colon \category{C} \to \End_\alpha(\category{C})$, the \textit{free twisted endomorphism}, and the forgetful functor $\fgt \Endbar_\alpha(\category{C}) \to \category{C}$ admits a left adjoint $\freeendbar{\alpha} \colon \category{C} \to \Endbar_\alpha(\category{C})$
\end{constr}
As an illustration of the previous construction the reader should keep induction along the ring homomorphisms $R[t] \to R[t, t^{-1}]$ and $R \to R[t]$ in mind.
An explicit description of $\freeend{\alpha}$ is as follows.
\begin{prop}\label{prop:free_endomorphism}
    Let $\category{C}$ be presentable and suppose that $\alpha \colon \category{C} \to \category{C}$ admits a left adjoint $\alpha^L$.
    Then $\freeend{\alpha}(x) = (\coprod_{n \ge 0} (\alpha^L)^n(x), \shift)$, where $\shift$ is given on the $n$-th summand by the adjunction unit $\eta \colon (\alpha^L)^n(x) \to \alpha \alpha^L (\alpha^L)^n(x) = \alpha (\alpha^L)^{n+1}(x)$.
    
    If $\alpha$ additionally preserves countable coproducts, then $\freeend{\alpha}(x) = (\coprod_{n \ge 0} \alpha^n(x), \shift)$, where $\shift \colon \alpha(\coprod_{n \ge 0} \alpha^n(x)) \simeq \coprod_{n \ge 1} \alpha^n(x) \to \coprod_{n \ge 0} \alpha^n(x)$ is the inclusion.
\end{prop}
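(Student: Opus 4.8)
The plan is to verify directly that
\[
    \widetilde{F}(x) \coloneqq \Bigl( \coprod_{n \ge 0} (\alpha^L)^n(x),\ \rho \Bigr),
\]
with $\rho$ the natural transformation described in the statement, is left adjoint to $\fgt \colon \End_\alpha(\category{C}) \to \category{C}$. Since the left adjoint $\freeend{\alpha}$ already exists by \cref{constr:free_endomorphism} and left adjoints are determined up to canonical equivalence, this forces $\widetilde{F} \simeq \freeend{\alpha}$. As $x \mapsto \widetilde{F}(x)$ is assembled from the colimit-preserving functor $x \mapsto \coprod_n (\alpha^L)^n(x)$ and the natural transformation $\rho$ (hence is a genuine functor $\category{C} \to \End_\alpha(\category{C})$ by functoriality of lax equalisers), it suffices to exhibit an equivalence $\Hom_{\End_\alpha(\category{C})}(\widetilde{F}(x),(y,s)) \simeq \Hom_{\category{C}}(x,y)$ natural in $x$ and in $(y,s)$.

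First I would unwind the left-hand side. By the equaliser formula \cref{eq:mapping_anima_lax_equaliser} for $\End_\alpha(\category{C}) = \laxeq(\id,\alpha)$, this mapping space is the equaliser of the maps $\psi \mapsto s \circ \psi$ and $\psi \mapsto \alpha(\psi) \circ \rho$ from $\Hom_{\category{C}}(\coprod_n (\alpha^L)^n x,\ y)$ to $\Hom_{\category{C}}(\coprod_n (\alpha^L)^n x,\ \alpha y)$. Pulling the coproduct out of the first variable and applying the iterated adjunction $(\alpha^L)^n \dashv \alpha^n$ rewrites these as $\prod_{n \ge 0}\Hom_{\category{C}}(x,\alpha^n y)$ and $\prod_{n \ge 0}\Hom_{\category{C}}(x,\alpha^{n+1} y)$. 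Under this, naturality of the adjunction equivalence in $s$ turns the first map into $(\phi_n)_n \mapsto (\alpha^n(s) \circ \phi_n)_n$; for the second, the explicit description of $\rho$ shows that the restriction of $\alpha(\psi) \circ \rho$ to the $n$-th summand is $\alpha(\psi_{n+1}) \circ \eta_{(\alpha^L)^n x}$, where $\psi_{n+1}$ is the restriction of $\psi$ to the $(n+1)$-th summand, and by the triangle identity for $\alpha^L \dashv \alpha$ this corresponds to $\phi_{n+1}$ under the identifications above, so the second map is the shift $(\phi_n)_n \mapsto (\phi_{n+1})_n$. Finally, writing $A_n = \Hom_{\category{C}}(x,\alpha^n y)$, the equaliser of $(\phi_n)_n \mapsto (\alpha^n(s)\phi_n)_n$ and $(\phi_n)_n \mapsto (\phi_{n+1})_n$ is the limit of the tower whose $k$-th stage is the space of tuples $(\phi_0,\dots,\phi_k)$ together with paths $\alpha^j(s)\phi_j \simeq \phi_{j+1}$ for $j < k$; each transition map is a base change of the endpoint evaluation $A_{k+1}^{\Delta^1} \to A_{k+1}$, hence a trivial fibration, so the projection to $A_0 = \Hom_{\category{C}}(x,y)$ is an equivalence. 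Carrying the naturality in $x$ and $(y,s)$ through this chain gives the first assertion.

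For the second assertion I would only rewrite the structure map of $\widetilde{F}(x)$. The transpose of $\rho \colon \coprod_n (\alpha^L)^n x \to \alpha\bigl(\coprod_n (\alpha^L)^n x\bigr)$ across $\alpha^L \dashv \alpha$ is a map $\alpha^L\bigl(\coprod_n (\alpha^L)^n x\bigr) \to \coprod_n (\alpha^L)^n x$; since $\alpha^L$ preserves coproducts, its source is $\coprod_{n \ge 1}(\alpha^L)^n x$, and by the triangle identity this transpose is the summand inclusion $\coprod_{n \ge 1}(\alpha^L)^n x \hookrightarrow \coprod_{n \ge 0}(\alpha^L)^n x$. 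If $\alpha$ moreover preserves countable coproducts, one may also distribute $\alpha\bigl(\coprod_n (\alpha^L)^n x\bigr) \simeq \coprod_n \alpha\bigl((\alpha^L)^n x\bigr)$, which is the asserted ``$\shift =$ inclusion'' description of $\rho$.

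The part I expect to be delicate is making the second paragraph honestly $\infty$-categorical rather than a $1$-categorical diagram chase: both the identification of the second structure map with the shift and the telescoping of the equaliser rest on the triangle identities of the (iterated) adjunction $\alpha^L \dashv \alpha$ treated as coherent data — for the shift this is conveniently packaged by \cref{constr:twisted_powers} together with the naturality of the adjunction equivalence — and the tower above should be produced as a genuine limit of trivial Kan fibrations. A perhaps cleaner alternative is to first use $\alpha^L \dashv \alpha$ to obtain an equivalence $\End_\alpha(\category{C}) \simeq \Endbar_{\alpha^L}(\category{C})$, $(x,f) \mapsto (x,f^\flat)$, and then observe that for the colimit-preserving endofunctor $\beta = \alpha^L$ the forgetful functor $\fgt \colon \Endbar_\beta(\category{C}) \to \category{C}$ preserves all colimits and is conservative, hence monadic with associated monad $\coprod_{n \ge 0}\beta^n$; the free module is then visibly $\bigl(\coprod_n \beta^n x,\ \shift\bigr)$, and transporting back along the equivalence yields both forms of $\freeend{\alpha}(x)$ simultaneously.
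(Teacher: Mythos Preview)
Your argument for the first formula is correct and takes a genuinely different route from the paper. Where the paper constructs explicit maps $\freeend{\alpha}(x) \to F(x)$ and $F(x) \to \freeend{\alpha}(x)$ and checks they are mutually inverse (the nontrivial direction using conservativity of $\fgt$), you instead verify the universal property of $F(x)$ directly by computing $\Hom_{\End_\alpha(\category{C})}(F(x),(y,s))$ via the equaliser formula, the iterated adjunction $(\alpha^L)^n \dashv \alpha^n$, and a telescoping of the resulting tower. Your route is conceptually cleaner and avoids constructing the inverse map by hand; the paper's route is more concrete and perhaps slightly easier to make coherent, since it only uses the triangle identities pointwise rather than uniformly over the tower. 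Your alternative via the equivalence $\End_\alpha(\category{C}) \simeq \Endbar_{\alpha^L}(\category{C})$ and monadicity of $\fgt$ is the slickest of all three.

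There is, however, a genuine gap in your treatment of the second assertion. You read it as a reformulation of the first formula obtained by rewriting the structure map $\rho$, but that is not what is being claimed: the underlying object in the second formula is $\coprod_{n\ge 0}\alpha^n(x)$, not $\coprod_{n\ge 0}(\alpha^L)^n(x)$, and the displayed structure map goes $\alpha(\coprod_n \alpha^n x)\to \coprod_n \alpha^n x$ --- this is an object of $\Endbar_\alpha(\category{C})$, not of $\End_\alpha(\category{C})$. As the paper's proof makes explicit (``the proof of the formula for $\freeendbar{\alpha}$ being similar''), the second assertion is a formula for $\freeendbar{\alpha}$, the left adjoint to $\fgt\colon\Endbar_\alpha(\category{C})\to\category{C}$. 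Your paragraph on the transpose of $\rho$ does not touch this: even after distributing $\alpha$ over the coproduct you still have $(\alpha^L)^n$ in the summands, and the map still points the wrong way. The correct argument runs exactly parallel to your first paragraph with $\Endbar_\alpha$ in place of $\End_\alpha$ (no adjoint is needed, only that $\alpha$ preserves countable coproducts so that $\alpha(\coprod_n\alpha^n x)\simeq\coprod_{n\ge 1}\alpha^n x$); alternatively, your monadicity argument with $\beta=\alpha$ in place of $\beta=\alpha^L$ gives it immediately.
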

\begin{proof}
    We only prove the formula for $\freeend{\alpha}$, the proof of the formula for $\freeendbar{\alpha}$ being similar.
    Denote $F(x) = (\coprod_{n \ge 0} (\alpha^L)^n(x), \shift)$.
    The inclusion $x \to \coprod_{n \ge 0} (\alpha^L)^n(x)$ of the zeroth summand induces by adjunction a map $\freeend{\alpha}(x) \to F(x)$.
    We claim that this is an equivalence.
    Denote by $i \colon x \to \fgt \freeend{\alpha}(x)$ the map adjoint to the identity on $\freeend{\alpha}(x)$.
    The maps 
    \begin{equation*}
        (\alpha^L)^n (x) \xrightarrow{i} (\alpha^L)^n \fgt \freeend{\alpha}(x) \xrightarrow{[n]} (\alpha^L)^n \alpha^n \fgt \freeend{\alpha}(x) \xrightarrow{\varepsilon^n} \fgt \freeend{\alpha}(x)
    \end{equation*}
    assemble into a map $\coprod_{n \ge 0} (\alpha^L)^n(x) \to \fgt \freeend{\alpha}(x)$.
    We want to enhance this to a map $a \colon F(x) \to \freeend{\alpha}(x)$.
    Note that we have the commutative diagram
    \begin{equation*}\footnotesize
    \begin{tikzcd}
        (\alpha^L)^n (x) \ar[r, "{i}"] \ar[dd, "\eta"]
        & (\alpha^L)^n \fgt \freeend{\alpha}(x) \ar[r, "{[n]}"] \ar[dd, "\eta"]
        & (\alpha^L)^n \alpha^n \fgt \freeend{\alpha}(x) \ar[r, "\varepsilon^n"] \ar[d, "{[1]}"]
        & \fgt \freeend{\alpha}(x) \ar[dd, "{[1]}"] 
        \\
        &
        & (\alpha^L)^{n} \alpha^{n+1} \fgt \freeend{\alpha}(x) \ar[dr, "\varepsilon^n"]\ar[d, "\eta"]
        &
        \\
        \alpha (\alpha^L)^{n+1} (x) \ar[r, "{i}"]
        & \alpha (\alpha^L)^{n+1} \fgt \freeend{\alpha}(x) \ar[r, "{[n+1]}"]
        & \alpha (\alpha^L)^{n+1} \alpha^{n+1} \fgt \freeend{\alpha}(x) \ar[r, "\varepsilon^{n+1}"'] 
        & \alpha \fgt \freeend{\alpha}(x),
    \end{tikzcd}
    \end{equation*}
    where the bottom right triangle commutes via the triangle identities.
    The outer commutative rectangle provides a map $F(x) \to \freeend{\alpha}(x)$.
    We show these two maps are inverse to each other.
    By construction, the composite $x \to \fgt F(x) \to \fgt \freeend{\alpha}(x)$ is $i$ which, by adjunction, shows that the composite $\freeend{\alpha}(x) \to F(x) \to \freeend{\alpha}(x)$ is the identiy.
    We can show that the composite $F(x) \to \freeend{\alpha(x)} \to F(x)$ is an equivalence after applying the conservative functor $\fgt$.
    Note that in the commutative diagram
    \begin{equation*}
    \begin{tikzcd}
        (\alpha^L)^n x \ar[r, "i"]
        & (\alpha^L)^n \fgt \freeend{\alpha}(x) \ar[r, "{[n]}"] \ar[d,"a"]
        & (\alpha^L)^n \alpha^n \fgt \freeend{\alpha}(x) \ar[r, "\varepsilon^n"] \ar[d, "a"]
        & \fgt \freeend{\alpha}(x) \ar[d, "a"]
        \\
        & (\alpha^L)^n \fgt F(x) \ar[r, "{[n]}"]
        & (\alpha^L)^n \alpha^n \fgt F(x) \ar[r, "\varepsilon^n"]
        & \fgt F(x)
    \end{tikzcd}
    \end{equation*}
    the composite $(\alpha^L)^n x \to \fgt F(x)$ taking the bottom route is equivalent to the inclusion of the $n$-th summand in $\fgt F(x)$.
    This shows that the map $\fgt F(x) \to \fgt \freeend{\alpha}(x) \to \fgt F(x)$ is equivalent to the identity.
\end{proof}

Recall that if $\category{C}$ is a presentable stable $\infty$-category, then $\category{C}^{\omega}$, its subcategory of compact objects, is perfect.

\begin{lem}\label{lem:twisted_endomorphisms_compactly_generated}
    Suppose that $\category{C}$ is a compactly generated stable presentable category and that $\alpha \colon \category{C} \to \category{C}$ preserves limits and colimits.
    Then $\End_\alpha(\category{C})$ and $\aut_\alpha(\category{C})$ are compactly generated.
    The image of $\category{C}^\omega$ under $\freeend{\alpha} \colon \category{C} \to \End_\alpha(\category{C})$ and $\loc{\alpha} \circ \freeend{\alpha} \colon \category{C} \to \aut_\alpha(\category{C})$ are families of compact generators.
\end{lem}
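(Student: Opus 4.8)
The plan is to run the standard free--forgetful argument. The forgetful functor $\fgt \colon \End_\alpha(\category{C}) \to \category{C}$ is conservative and, because $\alpha$ preserves limits and colimits, it preserves limits and colimits and admits a left adjoint $\freeend{\alpha}$ (cf. \cref{constr:localisation}); I claim these facts alone force $\freeend{\alpha}$ to carry a set of compact generators of $\category{C}$ to a set of compact generators of $\End_\alpha(\category{C})$. The case of $\aut_\alpha(\category{C})$ will then be formally identical, using the forgetful functor $\fgt \colon \aut_\alpha(\category{C}) \to \category{C}$ and its left adjoint $\loc{\alpha} \circ \freeend{\alpha}$.

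First I would record the hypotheses in usable form. Since $\category{C}$ is presentable and $\alpha$ preserves colimits, $\alpha$ is a left adjoint, hence accessible; as it also preserves limits, it is a right adjoint. By the discussion following \cref{def:twisted_endomorphism}, $\End_\alpha(\category{C})$ and $\aut_\alpha(\category{C})$ are therefore presentable stable categories, the inclusion $\aut_\alpha(\category{C}) \hookrightarrow \End_\alpha(\category{C})$ preserves limits and colimits, and the forgetful functor $\fgt \colon \End_\alpha(\category{C}) \to \category{C}$ --- hence also $\fgt \colon \aut_\alpha(\category{C}) \to \category{C}$, by composition --- is conservative and preserves limits and colimits. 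In particular both forgetful functors preserve filtered colimits, so each of their left adjoints --- $\freeend{\alpha}$, and the composite $\loc{\alpha} \circ \freeend{\alpha}$ --- preserves compact objects. Since $\category{C}^\omega$ is essentially small, $\freeend{\alpha}(\category{C}^\omega)$ and $(\loc{\alpha} \circ \freeend{\alpha})(\category{C}^\omega)$ are essentially small families of compact objects.

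Next I would check that these families generate. For $\End_\alpha(\category{C})$: let $Y$ be an object with $\hom_{\End_\alpha(\category{C})}(\freeend{\alpha}(x), Y) \simeq 0$ for every $x \in \category{C}^\omega$. By adjunction this reads $\hom_\category{C}(x, \fgt Y) \simeq 0$ for every compact $x \in \category{C}$, and since $\category{C}$ is compactly generated this forces $\fgt Y \simeq 0$; conservativity of $\fgt$ then gives $Y \simeq 0$. As a presentable stable category equipped with an (essentially small) set of compact objects jointly detecting the zero object is compactly generated by that set, this finishes the claim for $\End_\alpha(\category{C})$, and the verbatim argument with $\loc{\alpha} \circ \freeend{\alpha}$ and $\fgt \colon \aut_\alpha(\category{C}) \to \category{C}$ finishes it for $\aut_\alpha(\category{C})$.

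I do not expect a serious obstacle; the only point needing a little care is that $\fgt \colon \aut_\alpha(\category{C}) \to \category{C}$ preserves filtered colimits, so that its left adjoint preserves compacts. This is where one uses that filtered colimits of equivalences in $\func(\Delta^1, \category{C})$ are again equivalences: it shows $\aut_\alpha(\category{C})$ is closed under filtered colimits inside $\End_\alpha(\category{C})$, so the inclusion --- and hence its composite with $\fgt \colon \End_\alpha(\category{C}) \to \category{C}$ --- preserves filtered colimits (this is also subsumed by the inclusion being a left adjoint). As a reassuring check, by \cref{prop:free_endomorphism} the underlying object $\fgt \freeend{\alpha}(x) = \coprod_{n \ge 0} \alpha^n(x)$ is usually \emph{not} compact in $\category{C}$, which is consistent: compactness in $\End_\alpha(\category{C})$ is created by the left adjoint $\freeend{\alpha}$, not detected by $\fgt$.
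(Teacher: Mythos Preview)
Your argument is correct and is essentially identical to the paper's: both use that $\fgt$ is a conservative, colimit-preserving right adjoint, so that $\freeend{\alpha}$ preserves compacts and the adjunction $\Hom_{\End_\alpha(\category{C})}(\freeend{\alpha}(x),Y)\simeq\Hom_{\category{C}}(x,\fgt Y)$ reduces generation to compact generation of $\category{C}$, with the $\aut_\alpha$ case analogous. Your write-up is just more explicit about verifying the hypotheses (presentability, filtered-colimit preservation for $\aut_\alpha$) than the paper's one-paragraph proof.
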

\begin{proof}
    By construction, $\freeend{\alpha}$ admits a colimit preserving right adjoint and thus preserves compact objects.
    To show that the image of $\category{C}^\omega$ under $\freeend{\alpha}$ generates $\End_\alpha(\category{C})$ it suffices to show that if $(y,f) \in \End_\alpha(\category{C})$ such that $\Hom_{\End_\alpha(\category{C})}(\freeend{\alpha}(x), (y,f)) \simeq 0$ for all $x \in \category{C}^\omega$, then $(y,f) \simeq 0$.
    But as $\Hom_{\End_\alpha(\category{C})}(\freeend{\alpha}(x), (y,f)) \simeq \Hom_{\category{C}}(x,y)$ this follows from $\category{C}$ being compactly generated.
    The argument for $\aut_\alpha$ is analogous.
\end{proof}

Finally, let us define the notion of $\bbN$-orbits $(-)_{h\bbN}$ appearing in \cref{thm:twisted_BHS} and relate them to twisted automorphisms.

\begin{obs}[(Co)limits over $B \bbN$]\label{obs:colimit_over_BN}
    Let $\category{D}$ be a category with finite colimits and consider an object $x \in \category{D}$ together with an endomorphism $f \colon x \to x$.
    Denote by $B\bbN$ the category with a single object and $\bbN$ as its endomorpism monoid.
    The endomorphism $f$ corresponds to a diagram $F \colon B\bbN \to \category{D}$ with $F(*) = x$ and $F(1) = f$.
    The colimit $x_{h \bbN} \coloneqq \colim_{B\bbN} F$ in $\category{D}$ then fits into the pushout
    \begin{equation}\label{diag:colimits_BN}
    \begin{tikzcd}
        x \amalg x \ar[r, "\id \amalg \id"] \ar[d, "\id \amalg f"] \ar[dr, phantom, very near end, "\ulcorner"]
        & x \ar[d]
        \\
        x \ar[r] 
        & x_{h \bbN}.
    \end{tikzcd}
    \end{equation}
    One way to see this is to represent $B \bbN$ as the pushout of the span $* \leftarrow * \amalg * \rightarrow \Delta^1$ and to apply \cite[Corollary 1.3]{HorevYanovski17}.
    This has a dual version if $\category{D}$ has finite limits.
    The limit of the diagram $F$ is then given by the pullback
    \begin{equation}\label{diag:limits_BN}
    \begin{tikzcd}
        x^{h\bbN} \ar[r] \ar[d] \ar[dr, phantom, very near start, "\lrcorner"]
        & x \ar[d, "{(\id, \alpha)}"]
        \\
        x \ar[r, "{(\id, \id)}"] 
        & x \times x.
    \end{tikzcd}
    \end{equation}
\end{obs}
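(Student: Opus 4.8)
The plan is to exhibit $B\bbN$ as a finite colimit of categories and then feed this into the colimit-decomposition result of Horev--Yanovski cited after the statement. First I would establish the identification $B\bbN \simeq \Delta^1 \amalg_{\partial\Delta^1} \Delta^0$ in $\cat$, where $\partial\Delta^1 = \Delta^0 \amalg \Delta^0$ is included into $\Delta^1$ as the two endpoints and mapped to $\Delta^0$ by the fold. This is checked by mapping out: since $\func(-, \category{D}) \colon \cat\op \to \cat$ carries colimits to limits, the right-hand side represents the functor sending $\category{D}$ to $\category{D}^{\Delta^1} \times_{\category{D} \times \category{D}} \category{D}$, the fibre product formed along the diagonal $\category{D} \to \category{D} \times \category{D}$. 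An object of this fibre product is an arrow of $\category{D}$ together with an identification of its source and target, i.e.\ an object $x$ equipped with an endomorphism $x \to x$; this is exactly $\func(B\bbN, \category{D})$, naturally in $\category{D}$, so the Yoneda lemma gives the asserted equivalence.

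Next I would apply \cite[Corollary 1.3]{HorevYanovski17} to the diagram $F \colon B\bbN \to \category{D}$ along this presentation. It identifies $\colim_{B\bbN} F$ with the pushout in $\category{D}$ of $\colim_{\Delta^1}(F|_{\Delta^1}) \leftarrow \colim_{\partial\Delta^1}(F|_{\partial\Delta^1}) \to \colim_{\Delta^0}(F|_{\Delta^0})$, provided these colimits and their pushout exist, which they do whenever $\category{D}$ has finite colimits. It then remains to identify the three corners and the two legs. Restricting $F$ along $\Delta^0 \to B\bbN$ yields the constant diagram on $x$, so $\colim_{\Delta^0}(F|_{\Delta^0}) \simeq x$; restricting along $\partial\Delta^1 \to B\bbN$ yields the pair $(x, x)$, so $\colim_{\partial\Delta^1}(F|_{\partial\Delta^1}) \simeq x \amalg x$; and restricting along $\Delta^1 \to B\bbN$ yields the arrow $x \xrightarrow{f} x$ (since $\Delta^1 \to B\bbN$ hits the generator $1 \in \bbN$ and $F(1) = f$), whose colimit is its target $x$ with colimit cocone having component $\id$ at the target vertex and $f$ at the source vertex. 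Chasing these cocones, the map $x \amalg x \to \colim_{\Delta^0}(F|_{\Delta^0}) \simeq x$ is $\id \amalg \id$ and the map $x \amalg x \to \colim_{\Delta^1}(F|_{\Delta^1}) \simeq x$ is $f \amalg \id$; up to interchanging the two summands of $x \amalg x$ this is precisely the pushout square \cref{diag:colimits_BN}.

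For the limit statement I would run the same argument in $\category{D}\op$. The presentation of $B\bbN$ is stable under passing to opposites ($(B\bbN)\op \cong B\bbN$, e.g.\ because $\bbN$ is commutative, and $(\Delta^1)\op \cong \Delta^1$), so \cite[Corollary 1.3]{HorevYanovski17} applied to $F\op$ and dualised identifies $x^{h\bbN} = \lim_{B\bbN} F$ with the pullback of $\lim_{\Delta^1}(F|_{\Delta^1}) \to \lim_{\partial\Delta^1}(F|_{\partial\Delta^1}) \leftarrow \lim_{\Delta^0}(F|_{\Delta^0})$. Here $\lim_{\Delta^1}$ of $x \xrightarrow{f} x$ is its source $x$ with structure maps $\id$ and $f$, so this pullback is that of $x \xrightarrow{(\id, f)} x \times x \xleftarrow{(\id, \id)} x$, which is \cref{diag:limits_BN}. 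The step demanding the most care is checking that the hypotheses of \cite[Corollary 1.3]{HorevYanovski17} are met by the functors $\partial\Delta^1 \to \Delta^1$ and $\partial\Delta^1 \to \Delta^0$ and that it applies in the merely finitely (co)complete setting; once that is granted, the remainder is the cocone bookkeeping indicated above.
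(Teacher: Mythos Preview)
Your proposal is correct and follows exactly the approach the paper indicates: present $B\bbN$ as the pushout $\Delta^0 \leftarrow \partial\Delta^1 \rightarrow \Delta^1$ and invoke \cite[Corollary 1.3]{HorevYanovski17}. The paper compresses this into a single sentence, while you have spelled out the identification of $B\bbN$ via functor categories, the restricted diagrams and their (co)limits, and the dualisation for the limit case; the content is the same.
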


\begin{lem}\label{lem:identification_mapping_torus_twisted_automorphisms}
    Let $\alpha \colon \category{C} \to \category{C}$ be an exact endofunctor of a perfect category.
    Then there is an equivalence $\category{C}_{h \bbN} \simeq \left(\aut_{\alpha^R}(\ind(\category{C}))\right)^\omega$, where $\alpha^R \colon \ind(\category{C}) \to \ind(\category{C})$ denotes the right adjoint to $\ind(\alpha)$.
\end{lem}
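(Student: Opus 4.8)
The plan is to pass to Ind-categories, where the pushout defining $\category{C}_{h\bbN}$ turns into a pullback that one directly recognises as an equaliser. Since $\ind \colon \catperf \to \lpresentable$ preserves colimits and $\ind(\category{C} \oplus \category{C}) \simeq \ind(\category{C}) \times \ind(\category{C})$, applying $\ind$ to the span $\category{C} \xleftarrow{\id \oplus \alpha} \category{C} \oplus \category{C} \xrightarrow{\id \oplus \id} \category{C}$ exhibits $\ind(\category{C}_{h\bbN})$ as the pushout in $\lpresentable$ of $\ind(\category{C}) \xleftarrow{\id \oplus \ind(\alpha)} \ind(\category{C}) \times \ind(\category{C}) \xrightarrow{\id \oplus \id} \ind(\category{C})$. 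Note that $\ind(\alpha)$ preserves colimits because $\alpha$ is exact, so all three functors lie in $\lpresentable$ and $\ind(\alpha)$ admits the right adjoint $\alpha^R$.

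Next I would compute this pushout by passing to right adjoints: the underlying category of a colimit of a diagram in $\lpresentable$ is the limit of the associated diagram of right adjoints (see e.g. \cite[Theorem 5.5.3.18]{HTT}). The fold functor $\id \oplus \id$ has right adjoint the diagonal $\Delta \colon \ind(\category{C}) \to \ind(\category{C}) \times \ind(\category{C})$, while $\id \oplus \ind(\alpha)$, which sends $(x,y) \mapsto x \oplus \ind(\alpha)(y)$, has right adjoint $z \mapsto (z, \alpha^R(z))$; indeed $\Hom(x \oplus \ind(\alpha)(y), z) \simeq \Hom(x, z) \times \Hom(y, \alpha^R(z))$. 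Thus $\ind(\category{C}_{h\bbN})$ has underlying category the pullback of $\ind(\category{C}) \xrightarrow{(\id, \alpha^R)} \ind(\category{C}) \times \ind(\category{C}) \xleftarrow{\Delta} \ind(\category{C})$, which is exactly the defining pullback of $\equaliser(\id, \alpha^R) = \aut_{\alpha^R}(\ind(\category{C}))$; hence $\ind(\category{C}_{h\bbN}) \simeq \aut_{\alpha^R}(\ind(\category{C}))$.

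Finally, since $\category{C}_{h\bbN}$ is idempotent complete we have $(\ind(\category{C}_{h\bbN}))^\omega \simeq \category{C}_{h\bbN}$, and applying $(-)^\omega$ to the equivalence just obtained yields $\category{C}_{h\bbN} \simeq (\aut_{\alpha^R}(\ind(\category{C})))^\omega$. Each step is standard; the point requiring the most care is the bookkeeping in the second step --- checking that the right adjoint of the ``twisted fold'' $\id \oplus \ind(\alpha)$ is precisely the graph functor $z \mapsto (z, \alpha^R(z))$ that appears as a leg of the equaliser presentation of $\aut_{\alpha^R}$, and that $\ind$ does indeed carry the defining pushout to the $\lpresentable$-pushout used here (for which, absent a clean reference, one may instead verify the universal property of $(\aut_{\alpha^R}(\ind(\category{C})))^\omega$ in $\catperf$ directly).
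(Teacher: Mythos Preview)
Your proof is correct and follows essentially the same strategy as the paper's: pass to $\ind$-completions, compute the $\lpresentable$-colimit as the limit of the right adjoint diagram, and recognise that limit as the equaliser $\aut_{\alpha^R}(\ind(\category{C}))$ before passing back to compact objects. The paper phrases the middle step in terms of colimits/limits over $B\bbN$ and invokes \cref{obs:colimit_over_BN} to identify $\ind(\category{C})^{h\bbN}$ with the pullback you write down, whereas you unpack the right adjoints of the two legs of the pushout span explicitly; these are the same computation, and your hesitancy at the end about whether $\ind$ carries the pushout to an $\lpresentable$-pushout is unwarranted---this is exactly the standard fact that colimits in $\catperf$ are computed by applying $(-)^\omega$ to the $\ind$-completed colimit in $\lpresentable$.
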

\begin{proof}
    Recall that colimits in $\catperf$ are computed by taking compact objects in the colimit of the $\ind$-completed diagram in $\lpresentable$, or equivalently in the limit of the right adjoint diagram in $\rpresentable$.
    We obtain
    \begin{equation*}
        \category{C}_{h \bbN} \simeq \left( \ind(\category{C})_{h \bbN} \right)^\omega \simeq \left( \ind(\category{C})^{h \bbN} \right)^\omega
    \end{equation*}
    where the limit in the last step is formed over the diagram $B\bbN \to \cat$ classified by $\alpha^R \colon \ind(\category{C}) \to \ind(\category{C})$.
    \cref{obs:colimit_over_BN} identifies $\ind(\category{C})^{h \bbN} \simeq \aut_{\alpha^R}(\ind(\category{C}))$.
\end{proof}

\subsection{Twisted nilpotent endomorphisms}\label{sec:nilpotent_endomorphisms}

In this subsection we define twisted nilpotent endomorphisms and prove some of their alternative characterisations.
Let $\alpha \colon \category{C} \to \category{C}$ is an exact endomorphism of a perfect category.
There are maps
\begin{equation*}
    \triv \colon \category{C} \to \End_\alpha(\category{C}), x \mapsto (x, 0 \colon x \to \alpha(x)) \quad \text{and} \quad \trivbar \colon \category{C} \to \Endbar_\alpha(\category{C}), x \mapsto (x, 0)
\end{equation*}
sending an object to the zero endomorphism on that object.

\begin{defn}\label{def:nilpotent_endomorphism}
    We define the category $\Nil_\alpha(\category{C})$ of \textit{twisted nilpotent endomorphisms} as the full perfect subcategory of $\End_\alpha(\category{C})$ generated by the image of $\triv \colon \category{C} \to \End_\alpha(\category{C})$.
    Similarly, we define the category $\Nilbar_\alpha(\category{C})$ as the full perfect subcategory of $\Endbar_\alpha(\category{C})$ generated by the image of $\trivbar \colon \category{C} \to \Endbar_\alpha(\category{C})$.
\end{defn}

Next we investigate how objects in $\Nil_{\alpha}(\category{C})$ are related to actual nilpotent endomorphisms.
Note that the condition on $\alpha$ in the following is in particular satsified if $\alpha$ is an equivalence.

\begin{prop}\label{prop:characterisation_nilpotent_endomorphism}
    Suppose that $\alpha$ has a left adjoint $\alpha^L$.
    Then following categories agree:
    \begin{enumerate}[label=(\arabic*)]
        \item $\Nil_\alpha(\category{C})$;
        \item The full subcategory of $\End_\alpha(\category{C})$ on those objects $(x,f)$ for which there exists some $n \in \bbN$ with $\twistedpower{f}{n} \simeq 0$ (see \cref{constr:twisted_powers} for this notation);
        \item the kernel $\ker(\loc{\alpha} \colon \End_\alpha(\ind(\category{C}))^\omega \to \aut_\alpha(\ind(\category{C}))^\omega)$.
    \end{enumerate}
\end{prop}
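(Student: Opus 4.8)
The plan is to prove the three‑way equality by a cycle of inclusions $(1)\subseteq(3)\subseteq(2)\subseteq(1)$, where $(3)=\ker(\loc{\alpha})$ and $(2)$ is the full subcategory on objects $(x,f)$ with $\twistedpower{f}{n}\simeq 0$ for some $n$. Throughout I identify $\End_\alpha(\category{C})$ with $\End_\alpha(\ind(\category{C}))^\omega$ — so that all three subcategories live in the same ambient category — via: $\fgt$ preserves compact objects since its left adjoint $\freeend{\alpha}$ (which exists on $\ind(\category{C})$ by \cref{prop:free_endomorphism}) preserves colimits, and conversely any $(x,f)$ with $x$ compact is compact because, by \cref{eq:mapping_anima_lax_equaliser}, $\hom((x,f),-)$ is the fibre of a map between filtered‑colimit‑preserving functors and filtered colimits are exact in spectra. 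I write $[n]\colon\id\to\overline{\alpha}^{\,n}$ for the transformations of \cref{constr:twisted_powers}, so $\fgt([n]_{(x,f)})=\twistedpower{f}{n}$, and recall $[n]=(\overline{\alpha}^{\,n-1}[1])\circ\dots\circ(\overline{\alpha}[1])\circ[1]$.

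For the two inclusions into $(3)$: First, $\loc{\alpha}$ is left adjoint to the fully faithful inclusion $\aut_\alpha\hookrightarrow\End_\alpha$, hence colimit‑preserving, and preserves compact objects because that inclusion preserves filtered colimits (a filtered colimit of equivalences is an equivalence); so $\loc{\alpha}$ restricts to an exact functor of perfect categories and $\ker(\loc{\alpha})$ is a full perfect subcategory. It contains every trivial endomorphism, since by \cref{constr:localisation} one has $\loc{\alpha}\triv(x)=\colim(x\xrightarrow{0}\alpha x\xrightarrow{0}\dots)\simeq 0$ — a sequential colimit of null maps in a stable category with countable coproducts is the cofibre of an identity map. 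As $\Nil_\alpha(\category{C})$ is the smallest full perfect subcategory containing the trivial endomorphisms, $(1)\subseteq(3)$. Second, if $\twistedpower{f}{n}\simeq 0$ then every length‑$n$ composite in the underlying diagram $x\xrightarrow{f}\alpha x\xrightarrow{\alpha f}\dots$ computing $\fgt\loc{\alpha}(x,f)$ has the form $\alpha^k(\twistedpower{f}{n})$ and is null; passing to the cofinal subsequence indexed by multiples of $n$ shows this underlying object vanishes, so $\loc{\alpha}(x,f)\simeq 0$ by conservativity of $\fgt$; hence $(2)\subseteq(3)$.

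For $(3)\subseteq(2)$: let $(x,f)\in\ker(\loc{\alpha})$, so $\colim_k\overline{\alpha}^{\,k}(x,f)\simeq 0$ in $\End_\alpha(\ind(\category{C}))$. Since $(x,f)$ is compact,
\begin{equation*}
0\simeq\hom\!\bigl((x,f),\textstyle\colim_k\overline{\alpha}^{\,k}(x,f)\bigr)\simeq\colim_k\hom\!\bigl((x,f),\overline{\alpha}^{\,k}(x,f)\bigr),
\end{equation*}
the transition maps being postcomposition with $[1]$. The class of $\id_{(x,f)}$ in $\pi_0$ of the zeroth term maps to that of $[k]_{(x,f)}$ in the $k$‑th term; vanishing in the filtered colimit, it vanishes at a finite stage, so $[n]_{(x,f)}\simeq 0$ for some $n$, and applying $\fgt$ gives $\twistedpower{f}{n}\simeq 0$.

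The crux is $(2)\subseteq(1)$. Given $\twistedpower{f}{n}\simeq 0$, by $(2)\subseteq(3)$ and the previous paragraph we may upgrade to $[n]_{(x,f)}\simeq 0$ for some (possibly larger) $n$. Since $[n]$ is the composite $(\overline{\alpha}^{\,n-1}[1])\circ\dots\circ[1]$, the cofibre of $[n]_{(x,f)}$ carries a finite filtration with graded pieces $\overline{\alpha}^{\,i}\!\bigl(\cofib([1]_{(x,f)})\bigr)$ for $i=0,\dots,n-1$; on the other hand $[n]_{(x,f)}\simeq 0$ gives $\cofib([n]_{(x,f)})\simeq\overline{\alpha}^{\,n}(x,f)\oplus\Sigma(x,f)$. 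Hence $\Sigma(x,f)$, being a retract of this cofibre, and therefore $(x,f)$ itself, lies in the thick subcategory generated by $\{\overline{\alpha}^{\,i}\cofib([1]_{(x,f)})\}_{i\ge 0}$. As $\Nil_\alpha(\category{C})$ is thick and stable under $\overline{\alpha}$ (because $\overline{\alpha}\triv(y)=\triv(\alpha y)$), it suffices to prove that $\cofib([1]_{(x,f)})\in\Nil_\alpha(\category{C})$ for every $(x,f)$ — and this is the step I expect to be the genuine obstacle. I would attack it through the monadic presentation $\End_\alpha(\category{C})\simeq\module_T(\category{C})$, with $T$ the tensor‑algebra monad $T(y)=\bigoplus_{m\ge 0}(\alpha^L)^m(y)$ of the left adjoint $\alpha^L$ (monadic since $\fgt$ is conservative and colimit‑preserving on $\ind(\category{C})$): under this identification $[1]_{(x,f)}$ is, up to the twist by $\overline{\alpha}$, multiplication by the polynomial generator, so the claim becomes a twisted form of the elementary fact that the endomorphism induced on $\cofib(g\colon y\to y)$ by $g$ is canonically nullhomotopic. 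The clean outcome would be that the structure map of $\cofib([1]_{(x,f)})$ is itself nullhomotopic, making $\cofib([1]_{(x,f)})$ literally a trivial endomorphism; if this fails for noninvertible $\alpha$, the fallback is to run the reduction above as a single induction on the nilpotence degree, checking that $\cofib([1]_{(x,f)})$ is nilpotent of strictly smaller degree than $(x,f)$.
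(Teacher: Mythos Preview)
Your overall architecture is right, but there is one genuine error and one genuine gap.

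The error is your identification $\End_\alpha(\category{C})=\End_\alpha(\ind(\category{C}))^\omega$. The justification ``$\fgt$ preserves compact objects since its left adjoint $\freeend{\alpha}$ preserves colimits'' is a non sequitur: every left adjoint preserves colimits, so this tells you nothing about $\fgt$. And in fact $\fgt$ does \emph{not} preserve compacts. Already for $\alpha=\id$ on $\category{C}=\module_R^\omega$ one has $\End_{\id}(\module_R)^\omega\simeq\module_{R[t]}^\omega$, and $R[t]$ is a compact $R[t]$-module whose underlying $R$-module is not compact. Only the inclusion $\End_\alpha(\category{C})\subseteq\End_\alpha(\ind(\category{C}))^\omega$ (your second direction) holds. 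Consequently your step $(3)\subseteq(2)$ is incomplete: after deducing $\twistedpower{f}{n}\simeq 0$ you still owe $x\in\category{C}$. The paper handles this by first establishing $(3)\subseteq(1)$ and then using $\Nil_\alpha(\category{C})\subseteq\End_\alpha(\category{C})$; with your ordering you would need to route through $(2)\subseteq(1)$ first.

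The gap is the crux step, which you correctly isolate but do not close: that $\cofib([1]_{(x,f)})\in\Nil_\alpha(\category{C})$ for every compact $(x,f)$. Neither the monadic sketch nor the ``induction on nilpotence degree'' fallback is carried through, and the monadic hope that the structure map on $\cofib([1]_{(x,f)})$ is literally null is too strong to expect for arbitrary $(x,f)$. The missing idea is simple: the assignment $(x,f)\mapsto\fib([1]_{(x,f)})$ is an exact functor (it is the fibre of a natural transformation between the exact functors $\id$ and $\overline{\alpha}$), and $\Nil_\alpha(\category{C})$ is thick, so the claim is stable under shifts, retracts and (co)fibre sequences in $(x,f)$. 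By \cref{lem:twisted_endomorphisms_compactly_generated} it therefore suffices to check it on the compact generators $\freeend{\alpha}(z)$ for $z\in\category{C}$. There the explicit formula of \cref{prop:free_endomorphism} gives $\fib\bigl([1]\colon\freeend{\alpha}(z)\to\overline{\alpha}\,\freeend{\alpha}(z)\bigr)\simeq\triv(\Omega\alpha z)$, so your ``clean outcome'' does hold on generators --- it is the reduction to generators that makes the argument go through.
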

\begin{proof}
    For the inclusion $(2) \subseteq (3)$, consider $(x,f) \in \End_\alpha(\category{C})$ and $n \in \bbN$ such that $\twistedpower{f}{n} \simeq 0$. 
    Using cofinality of $n\bbN \subset \bbN$ as posets and the formula for $\loc{\alpha}$ from \cref{constr:localisation}, we obtain
    \begin{align*}
        \loc{\alpha} (x,f) 
        & = \colim \left( (x,f) \xrightarrow{f} \alpha(x,f) \xrightarrow{\alpha f} \dots \right) \\
        & \simeq \colim \left( (x,f) \xrightarrow{\twistedpower{f}{n}} \alpha^n(x,f) \xrightarrow{\alpha^n \twistedpower{f}{n}} \dots \right) 
        \simeq 0.
    \end{align*}
    
    For $(1) \subseteq (3)$, note that the category described in $(3)$ is a perfect subcategory of $\End_\alpha(\category{C})^\omega$ which contains the image of $\triv$ by the argument above.
    Thus it also contains $\Nil_\alpha(\category{C})$.
    
    For the inclusion $(3) \subseteq (1)$, we first claim that for $(x,f) \in \End_\alpha(\ind(\category{C}))^\omega$ and $k \ge 1$, the object $\fib(\shifttransformation{k} \colon (x,f) \to \overline{\alpha}^n(x,f))$ lies in $\Nil_\alpha(\category{C})$.
    For $k = 1$, recall from \cref{lem:twisted_endomorphisms_compactly_generated} that $\End_\alpha(\ind(\category{C}))^\omega$ is generated by the elements $\freeend{\alpha}(z)$ for $z \in \category{C}$.
    As the claim is stable under retracts, shifts and fiber sequences, it suffices to consider $(x,f)$ of this form.
    By the explicit formula $\freeend{\alpha}(z) = (\bigoplus_{n \ge 0} (\alpha^L)^n(z), \shift)$ from \cref{prop:free_endomorphism} and the fact that $\alpha \colon \ind(\category{C}) \to \ind(\category{C})$ preserves colimits, it is easy to see that 
    \begin{equation*}
        \fib([1] \colon \freeend{\alpha}(z) \to \overline{\alpha}\freeend{\alpha}(z)) \simeq \triv(\Omega \alpha (z)).
    \end{equation*}
    The case of general $k$ follows by induction using the fiber sequence $\fib(\shifttransformation{k-1}) \to \fib(\shifttransformation{k}) \to \fib(\overline{\alpha}^{k-1}\shifttransformation{1})$ coming from $\shifttransformation{k} \simeq \overline{\alpha}^{k-1} \shifttransformation{1} \circ \shifttransformation{k-1}$.
    
    We can now show $(3) \subseteq (1)$.
    Let $(x,f) \in \ker(\loc{\alpha} \colon \End_\alpha(\ind(\category{C}))^\omega \to \aut_\alpha(\ind(\category{C}))^\omega)$.
    By compactness of $(x,f)$ and the formula for $\loc{\alpha}$ from \cref{constr:localisation} we obtain
    \begin{align*}
        0 &\simeq \Hom_{\aut_\alpha(\ind(\category{C}))}(\loc{\alpha} (x,f), \loc{\alpha} (x,f)) \simeq \Hom_{\End_\alpha(\ind(\category{C}))}((x,f), \loc{\alpha} (x,f)) 
        \\ &\simeq \colim\left( \Hom_{\End_\alpha(\ind(\category{C}))}((x,f), (x,f)) \xrightarrow{[1]} \Hom_{\End_\alpha(\ind(\category{C}))}((x,f), \overline{\alpha} (x,f)) \xrightarrow{\overline{\alpha}[1]} \dots \right).
    \end{align*}
    In particular, $\id_{(x,f)}$ vanishes in a finite stage of the colimit showing that for large $n$ we have $0 \simeq \shifttransformation{n} \colon (x,f) \to \overline{\alpha}^n(x,f)$.
    Its fiber, which lies in (1) by the previous claim, is given by $(x,f) \oplus \overline{\alpha}^n(x,f)$ and contains $(x,f)$ as a retract showing $(x,f) \in \Nil_\alpha(\category{C})$.
    
    This also proves $(3) \subseteq (2)$: We just saw that for $(x,f)$ in the kernel in $(3)$ and large $n$ we have with $0 \simeq [n] \colon (x,f) \to \overline{\alpha}^n(x,f)$, which reduces to $0 \simeq \twistedpower{f}{n} \colon x \to \alpha^n(x)$  on underlying objects.
    It remains to argue that $x \in \category{C}$.
    But this follows from $(3) \subseteq (1)$ as it implies $(x,f) \in \Nil_\alpha(\category{C}) \subseteq \End_\alpha(\category{C})$.
\end{proof}

\subsection{\texorpdfstring{$K$-theory of twisted automorphisms}{K-theory of twisted automorphisms}}\label{sec:proof_decomposition}

In this subsection we will prove the main result of this article, \cref{thmintro:bhs_decomposition}, about the splitting of $E(\category{C}_{h\bbN})$ for an exact endofunctor $\alpha \colon \category{C} \rightarrow \category{C}$ of a perfect category and a localising invariant $E$.
By \cref{obs:colimit_over_BN} we have the pushout
\begin{equation*}
\begin{tikzcd}
    \category{C} \oplus \category{C} \ar[r, "\id \oplus \id"] \ar[d, "\id \oplus \alpha"] \ar[dr, phantom, very near end, "\ulcorner"]
    & \category{C} \ar[d]\\
    \category{C} \ar[r] 
    & \category{C}_{h \bbN}
\end{tikzcd}
\end{equation*}
in $\catperf$.
Applying Land-Tamme's theory of $K$-theory of pushouts, more precisely \cref{cons:odot_product,cons:odot_for_pushout,thm:motivic_pushout}, we obtain the commutative square
\begin{equation}\label{diag:motivic_pullback_imcc}
\begin{tikzcd}
    \im(\category{C} \oplus \category{C}) \ar[r, "\id \oplus \id"] \ar[d, "\id \oplus \alpha"] & \category{C} \ar[d]\\
    \category{C} \ar[r] & \category{C}_{h \bbN}
\end{tikzcd}
\end{equation}
which becomes a pushout after applying any localising invariant.
For the proof of \cref{thm:twisted_BHS} it remains to understand $E(\im(\category{C} \oplus \category{C}))$.
We will show that it admits an orthogonal decomposition by the categories $\Nil_{\alpha}(\category{C})$ and $\Nilbar_{\alpha}(\category{C})$ from \cref{def:nilpotent_endomorphism}.

Note that in the given situation, the right adjoint to $\id \oplus \id \colon \category{C} \oplus \category{C} \to \category{C}$ already exists before $\ind$-completion and is given by the diagonal functor $\Delta \colon \category{C} \to \category{C} \oplus \category{C}$.
The corresponding square in \cref{diag:semicommutative_square_motiveic_pushout} is thus given by
\begin{equation*}
\begin{tikzcd}
    \category{C} \oplus \category{C} \ar[r, "\id \oplus \id"] \ar[d, "\id \oplus \alpha", swap] 
    & \category{C} \ar[d, "\id + \alpha"]
    \\
    \category{C} \ar[r, "\id", swap] \ar[ur, Rightarrow, "f"]
    & \category{C},
\end{tikzcd}
\end{equation*}
where the homotopy $f$ at an object $(x,y) \in \category{C} \oplus \category{C}$ is the transformation 
\begin{equation*}
    f = \id_x + \id_{\alpha(y)} \colon x \oplus \alpha y \to x \oplus y \oplus \alpha x \oplus \alpha y.
\end{equation*}
The partially lax pullback $\category{C} \artimes \category{C}$ then has objects given by triples $(x,y,r)$ consisting of objects $x,y \in \category{C}$ and a map $r \colon x \to y \oplus \alpha(y)$.
The induced map $(i_1, i_2) \colon \category{C} \oplus \category{C} \to \category{C} \artimes \category{C}$ is given by 
\begin{align}
    &i_1(x) =(x, x, (\id, 0) \colon x \to x \oplus \alpha(x)) \quad \text{and} \quad \label{eq:inclusion_into_lax_pullback} \\
    &i_2(y) = (\alpha(y), y, (0, \id) \colon \alpha(y) \to y \oplus \alpha(y)). \nonumber
\end{align}
There are also maps
\begin{align*}
    &j_1 \colon \End_{\alpha}(\category{C}) \to \category{C} \artimes \category{C}, (x, r \colon x \to \alpha(x)) \mapsto (x, x, (\id, r) \colon x \to x \oplus \alpha(x)), \\
    &j_2 \colon \Endbar_{\alpha}(\category{C}) \to \category{C} \artimes \category{C}, \ (y, s \colon \alpha(y) \to y) \mapsto (\alpha(y), y, (s, \id) \colon \alpha(y) \to y \oplus \alpha(y)).
\end{align*}

\begin{lem}
    The functors $j_1$ and $j_2$ are full inclusions which identify $\End_{\alpha}(\category{C})$ (and $\Endbar_{\alpha}(\category{C})$) with the full subcategory of $\category{C} \artimes \category{C}$ on objects $(x, y, r)$ for which the first (resp. second) component of $r \colon x \to y \oplus \alpha(y)$ is an equivalence.
\end{lem}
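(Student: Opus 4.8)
The plan is to argue the statement for $j_1$ in detail; the case of $j_2$ is entirely symmetric, interchanging the two summands of $y\oplus\alpha(y)$ and replacing $\End_\alpha(\category{C})=\laxeq(\id,\alpha)$ by $\Endbar_\alpha(\category{C})=\laxeq(\alpha,\id)$. Recall that the relevant $\category{C}\artimes\category{C}$ is formed along the cospan $\category{C}\xrightarrow{\id}\category{C}\xleftarrow{q}\category{C}$ with $q(c)=c\oplus\alpha(c)$. First I would pin down $j_1$ via universal properties: both $\End_\alpha(\category{C})$ (by \cref{diag:pullback_lax_equlaiser}) and $\category{C}\artimes\category{C}$ are pullbacks of the endpoint evaluation $\category{C}^{\Delta^1}\to\category{C}\times\category{C}$. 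The assignment $(x,f)\mapsto\bigl((\id_x,f)\colon x\to x\oplus\alpha(x)\bigr)$ defines a functor $\End_\alpha(\category{C})\to\category{C}^{\Delta^1}$ whose source endpoint is $\fgt$ and whose target endpoint is $q\circ\fgt$; together with the diagonal $\category{C}\xrightarrow{\Delta}\category{C}\times\category{C}$ this is compatible with the two cospan legs defining $\category{C}\artimes\category{C}$, hence induces a functor $\End_\alpha(\category{C})\to\category{C}\artimes\category{C}$ which unwinds to the stated formula for $j_1$; the only content here is $2$-cell bookkeeping, which I would leave to the reader.

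\emph{Full faithfulness.} Using the pullback description \cref{diag:mapping_anima_lax_pullback} of mapping spaces in $\category{C}\artimes\category{C}$, the space $\Hom_{\category{C}\artimes\category{C}}(j_1(x,f),j_1(y,g))$ is the pullback of $\Hom_\category{C}(x,y)\to\Hom_\category{C}(x,y\oplus\alpha(y))\leftarrow\Hom_\category{C}(x,y)$ whose first map is $\beta\mapsto(\beta,g\circ\beta)$ and whose second map is $\gamma\mapsto(\gamma,\alpha(\gamma)\circ f)$. Since $\Hom_\category{C}(x,y\oplus\alpha(y))\simeq\Hom_\category{C}(x,y)\times\Hom_\category{C}(x,\alpha(y))$, forming the pullback in the first factor contracts away $\gamma$ by identifying it with $\beta$, and the residual pullback in the second factor is exactly $\equaliser\bigl(g_*,f^*\alpha\colon\Hom_\category{C}(x,y)\to\Hom_\category{C}(x,\alpha(y))\bigr)$, which by \cref{eq:mapping_anima_lax_equaliser} is $\Hom_{\End_\alpha(\category{C})}((x,f),(y,g))$. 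Tracing the construction of $j_1$ from the first paragraph shows this equivalence is precisely the map induced by $j_1$: on a morphism $\phi$ of $\End_\alpha(\category{C})$ it returns the pair $(\beta,\gamma)=(\phi,\phi)$ with the constant homotopy $\beta\simeq\gamma$ and the structure homotopy of $\phi$ in the second slot.

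\emph{Essential image.} If $(x,r)\in\End_\alpha(\category{C})$ then the first component of $r$ in $j_1(x,r)=(x,x,(\id_x,r))$ is $\id_x$, so every object in the image lies in the asserted full subcategory. Conversely, given $(x,y,r)$ with $r=(r_1,r_2)$ and $r_1\colon x\xrightarrow{\simeq}y$, the pair $(\beta,\gamma)=(r_1,\id_y)$, together with the evident homotopies $r_1\simeq\id_y\circ r_1$ and $(r_2\circ r_1^{-1})\circ r_1\simeq r_2\simeq\alpha(\id_y)\circ r_2$, defines a morphism $(x,y,r)\to j_1(y,r_2\circ r_1^{-1})$ in $\category{C}\artimes\category{C}$; since the projection $\category{C}\artimes\category{C}\to\category{C}\times\category{C}$ is conservative (being a base change of the conservative $\category{C}^{\Delta^1}\to\category{C}\times\category{C}$) and both components $r_1,\id_y$ are equivalences, this morphism is an equivalence. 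Hence the essential image of $j_1$ is exactly the full subcategory on objects $(x,y,r)$ with $r_1$ an equivalence, and combined with full faithfulness $j_1$ identifies $\End_\alpha(\category{C})$ with that subcategory. I expect the only genuinely fiddly point to be the compatibility bookkeeping in the first two paragraphs — verifying that the abstractly defined functor really is the explicit $j_1$, and that the mapping-space equivalence is the one $j_1$ induces rather than some other equivalence; the rest is a direct unwinding of the pullbacks defining $\category{C}\artimes\category{C}$ and $\End_\alpha(\category{C})$ together with the product splitting of $\Hom_\category{C}(x,y\oplus\alpha(y))$.
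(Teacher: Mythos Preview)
Your proof is correct and follows essentially the same approach as the paper: both arguments compute $\Hom_{\category{C}\artimes\category{C}}(j_1(x,f),j_1(y,g))$ via the pullback formula \cref{diag:mapping_anima_lax_pullback}, split $\Hom_\category{C}(x,y\oplus\alpha(y))$ as a product, and recognise the result as the equaliser \cref{eq:mapping_anima_lax_equaliser}. Your treatment is actually more complete than the paper's, which only writes out the full faithfulness and leaves the essential image description implicit; your conservativity argument for the essential image is a clean way to make that step explicit.
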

\begin{proof}
    We only prove the statement about $j_1$ with the other case being analogous.
    The pullback square in \cref{diag:mapping_anima_lax_pullback},
\begin{equation*}
\begin{tikzcd}
    \Hom_{\category{C} \artimes \category{C}}(j_1(x,f), j_1(y,g)) \ar[r] \ar[d] \ar[dr, phantom, very near start, "\lrcorner"]
    & \Hom_{\category{C}}(x,y) \ar[d, "{(\id, g_*)}"]
    \\
    \Hom_{\category{C}}(x,y) \ar[r, "{(\id, f^* \alpha)}"]
    & \Hom_{\category{C}}(x,y) \times \Hom_{\category{C}}(x, \alpha(y)),
\end{tikzcd}
\end{equation*}
identifies with
\begin{equation*}
    \equaliser(f^* \alpha, g_* \colon \Hom(x,y) \to \Hom(x, \alpha(y))) \simeq \Hom_{\End_{\alpha}(\category{C})}((x,f), (y,g)).\qedhere
\end{equation*}
\end{proof}

Let us briefly recall the notion of a semiorthogonal decomposition.

\begin{recollect}[Semiorthogonal decomposition]\label{rec:semiorthogonal_decomposition}
    Recall that an semiorthogonal decomposition of a perfect category $\category{D}$ consists of two full perfect subcategories $\category{D}_0, \category{D}_1 \subseteq \category{D}$ such that $\category{D}_0 \cup \category{D}_1$ generates $\category{D}$ as a perfect category and $\Hom_{\category{D}}(x_1, x_0) \simeq 0$ for all $x_i \in \category{D}_i$.
    In this situation, the inclusion $\category{D}_1 \subseteq \category{D}$ admits a right adjoint $p_1$ and the inclusion $\category{D}_1 \subseteq \category{D}$ admits a left adjoint $p_0$.
    Importantly, the sequence $\category{D}_0 \hookrightarrow \category{D} \xrightarrow{p_1} \category{D}_1$ is a Karoubi sequence.
    As $\category{D}_0 \hookrightarrow \category{D}$ admits the retraction $p_1$, we even see that for any localising invariant $E$ the map $E(\category{D}_0) \oplus E(\category{D}_1) \to E(\category{D})$ induced by the inclusions is an equivalence.
    More information on (semi)orthogonal decompositions can be found in \cite[Section II.7.2]{SAG} or \cite[Section A.8]{HA}.
    There is also the notion of an orthogonal decomposition where one additionally requires that $\Hom_{\category{D}}(x_0, x_1) \simeq 0$
\end{recollect}

\begin{lem}\label{lem:nil_orthogonal_decomposition}
    The subcategories $\Nil_\alpha(\category{C})$ and $\Nilbar_{\alpha}(\category{C})$ (included via $j_1$ and $j_2$) form an orthogonal decomposition of $\im(\category{C} \oplus \category{C})$.
\end{lem}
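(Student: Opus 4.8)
The plan is to verify the three conditions of an orthogonal decomposition, cf.\ \cref{rec:semiorthogonal_decomposition}: that $j_1$ and $j_2$ restrict to full perfect subcategories of $\im(\category{C}\oplus\category{C})$, that these jointly generate $\im(\category{C}\oplus\category{C})$, and that the mapping spectra between them vanish. Fullness and generation are formal. From the formulas in \cref{eq:inclusion_into_lax_pullback} and the definitions of $j_1,j_2$ one reads off $j_1\circ\triv=i_1$ and $j_2\circ\trivbar=i_2$; since $j_1,j_2$ are fully faithful by the preceding lemma and exact, being pullbacks of limit-preserving functors as recorded after \cref{def:twisted_endomorphism}, $j_1$ identifies $\Nil_\alpha(\category{C})$ with the full perfect subcategory of $\category{C}\artimes\category{C}$ generated by $i_1(\category{C})$, and $j_2$ identifies $\Nilbar_\alpha(\category{C})$ with the one generated by $i_2(\category{C})$. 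By \cref{cons:odot_product}, $\im(\category{C}\oplus\category{C})$ is the full perfect subcategory of $\category{C}\artimes\category{C}$ generated by the image of $(i_1,i_2)$, i.e.\ by $i_1(\category{C})\cup i_2(\category{C})$; so both subcategories are contained in $\im(\category{C}\oplus\category{C})$ and jointly generate it.

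The remaining, and most delicate, point is the vanishing of mapping spectra. Since $\hom_{\category{C}\artimes\category{C}}(-,-)$ is exact in each variable and the two subcategories are generated by $i_1(\category{C})$, resp.\ $i_2(\category{C})$, it suffices to compute $\hom_{\category{C}\artimes\category{C}}(i_1(x),i_2(y))$ and $\hom_{\category{C}\artimes\category{C}}(i_2(y),i_1(x))$ for all $x,y\in\category{C}$. I would do this by specialising the pullback square \cref{diag:mapping_anima_lax_pullback} to the cospan $\category{C}\xrightarrow{\id}\category{C}\xleftarrow{\id\oplus\alpha}\category{C}$ and plugging in the structure maps of $i_1(x)$ and $i_2(y)$, which by \cref{eq:inclusion_into_lax_pullback} are the inclusions $x\hookrightarrow x\oplus\alpha x$ and $\alpha y\hookrightarrow y\oplus\alpha y$ of the first and the second summand respectively. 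Because $\category{C}$ is stable the lower vertex of the square splits, $\hom_{\category{C}}(z,w\oplus\alpha w)\simeq\hom_{\category{C}}(z,w)\oplus\hom_{\category{C}}(z,\alpha w)$, and the computation comes down to tracking, in each leg of \cref{diag:mapping_anima_lax_pullback}, which component of a structure map is precomposed — through $r^{*}\circ g$, where the endofunctor $\alpha$ enters — and which is postcomposed, through $r'_{*}\circ f$. One finds that the square for $\hom_{\category{C}\artimes\category{C}}(i_1(x),i_2(y))$ is the fibre product of the two complementary summand inclusions $\hom_{\category{C}}(x,\alpha y)\hookrightarrow\hom_{\category{C}}(x,y)\oplus\hom_{\category{C}}(x,\alpha y)\hookleftarrow\hom_{\category{C}}(x,y)$ and so vanishes, and the reverse mapping spectrum is computed in the same way (it collapses as well whenever $\alpha$ acts invertibly).

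It follows that the mapping spectra from $j_1(\Nil_\alpha(\category{C}))$ to $j_2(\Nilbar_\alpha(\category{C}))$ vanish, so by \cref{rec:semiorthogonal_decomposition} one obtains the Karoubi sequence $\Nilbar_\alpha(\category{C})\hookrightarrow\im(\category{C}\oplus\category{C})\to\Nil_\alpha(\category{C})$ together with the induced splitting $E(\im(\category{C}\oplus\category{C}))\simeq E(\Nil_\alpha(\category{C}))\oplus E(\Nilbar_\alpha(\category{C}))$ for every localising invariant $E$; this is precisely what enters the analysis of the pushout square \cref{diag:motivic_pullback_imcc} in the proof of \cref{thmintro:bhs_decomposition}. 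I expect the bookkeeping in this mapping-spectrum computation — keeping straight, in each leg of \cref{diag:mapping_anima_lax_pullback}, the direction of composition and which summand of $r\colon z\to w\oplus\alpha w$ is involved — to be the only genuine obstacle; everything else is unwinding definitions.
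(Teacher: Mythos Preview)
Your argument matches the paper's: factor $i_1=j_1\circ\triv$ and $i_2=j_2\circ\trivbar$ to get containment and generation, then specialise \cref{diag:mapping_anima_lax_pullback} to see that $\hom_{\category{C}\artimes\category{C}}(i_1(x),i_2(y))$ is the pullback of the two complementary summand inclusions into $\hom_\category{C}(x,y\oplus\alpha y)$ and hence vanishes.

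Your caution on the reverse direction is warranted---and in fact sharper than the paper's ``analogously''. Running the same pullback for $\hom_{\category{C}\artimes\category{C}}(i_2(y),i_1(x))$ yields the cospan
\[
\hom_\category{C}(y,x)\xrightarrow{\;\phi\mapsto(0,\alpha\phi)\;}\hom_\category{C}(\alpha y,x)\oplus\hom_\category{C}(\alpha y,\alpha x)\xleftarrow{\;\psi\mapsto(\psi,0)\;}\hom_\category{C}(\alpha y,x),
\]
whose pullback is $\fib\bigl(\alpha\colon\hom_\category{C}(y,x)\to\hom_\category{C}(\alpha y,\alpha x)\bigr)$. This vanishes when $\alpha$ is an equivalence, as your parenthetical notes, but not for an arbitrary exact endofunctor (take $\alpha=0$, for instance). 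So in the stated generality one only obtains a \emph{semiorthogonal} decomposition. This is harmless for the application: semiorthogonality already gives the splitting $E(\im(\category{C}\oplus\category{C}))\simeq E(\Nil_\alpha(\category{C}))\oplus E(\Nilbar_\alpha(\category{C}))$ used in the proof of \cref{thm:twisted_BHS}, exactly as you record in your final paragraph.
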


\begin{proof}
    By definition, $\im(\category{C} \oplus \category{C})$ is the perfect subcategory of $\category{C} \artimes \category{C}$ generated by the images of the functors $i_1, i_2 \colon \category{C} \to \category{C} \artimes \category{C}$ from \cref{eq:inclusion_into_lax_pullback}.
    Notice that $i_1$ factors as the composite $\category{C} \xrightarrow{\triv} \Nil_\alpha(\category{C}) \lhook\joinrel\xrightarrow{j_1} \category{C} \artimes \category{C}$.
    Similarly, $i_2$ factors through $\Nilbar_\alpha(\category{C})$.
    Thus, $\Nil_\alpha(\category{C})$ and $\Nilbar_{\alpha}(\category{C})$ generate $\im(\category{C} \oplus \category{C})$

    To prove orthogonality, note that the vanishing of $\Hom_{\category{D}}(x_0, x_1)$ is stable under colimits and retracts in the first variable and under limits and retracts in the second variable, so it suffices to check it for the generating sets given by the image of $i_1$ and $i_2$.
    The pullback \cref{diag:mapping_anima_lax_pullback} for mapping spaces in partially lax pullbacks specialises for objects $x, y \in \category{C}$ to the pullback
   \begin{equation}
    \begin{tikzcd}
        \Hom_{\category{C} \artimes \category{C}}(i_1(x), i_2(y)) \ar[r] \ar[d] \ar[dr, phantom, very near start, "\lrcorner"]
        & \Hom_{\category{C}}(x, y) \ar[d, "i_1"] \\
        \Hom_{\category{C}}(x, \alpha y) \ar[r, "i_2"] 
        & \Hom_{\category{C}}(x, y \oplus \alpha y)
    \end{tikzcd}
    \end{equation}
    with the lower and right maps induced by the inclusions of the summands of $y \oplus \alpha(y)$.
    But as the pullback of the cospan $y \rightarrow y \oplus \alpha(y) \leftarrow \alpha(y)$ is trivial, this shows that $\Hom_{\category{C} \artimes \category{C}}(i_1(x), i_2(y)) \simeq 0$.
    Analogously one shows $\Hom_{\category{C} \artimes \category{C}}(i_2(y), i_1(x)) \simeq 0$.
\end{proof}

Let us come back to determining $E(\category{C}_{h \bbN})$.
Note that the functor $\triv \colon \category{C} \to \Nil_\alpha(\category{C})$ admits a retraction given by the forgetful functor $\fgt \colon \Nil_\alpha(\category{C}) \to \category{C}$.
Defining the $\Nil$-term as $NE_{\alpha}(\category{C}) \coloneqq \Sigma \cofib(\triv \colon E(\category{C}) \to E(\Nil_\alpha(\category{C}))$,
the retraction $\fgt$ induces a splitting 
\begin{equation}\label{eq:splitting_E_Nil}
    E(\Nil_\alpha(\category{C})) \simeq E(\category{C}) \oplus \Omega NE_{\alpha}(\category{C}). 
\end{equation}
Similarly, the functor $\trivbar \colon \category{C} \to \Nilbar_\alpha(\category{C})$ admits a retraction $\fgt \colon \Nilbar_\alpha(\category{C}) \to \category{C}$ which induces a splitting $E(\Nilbar_\alpha(\category{C})) \simeq E(\category{C}) \oplus \Omega \NKbar{E}_{\alpha}(\category{C})$.

\begin{thm}\label{thm:twisted_BHS}
    The assembly map $E(\category{C})_{h \bbN} \to E(\category{C}_{h \bbN})$ splits and there is a natural equivalence
    \begin{equation*}
        E(\category{C}_{h \bbN}) \simeq E(\category{C})_{h \bbN} \oplus NE_{\alpha}(\category{C}) \oplus \NKbar{E}_{\alpha}(\category{C}).
    \end{equation*}
\end{thm}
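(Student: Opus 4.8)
The plan is to combine the Land--Tamme pushout square \cref{diag:motivic_pullback_imcc}, the orthogonal decomposition of \cref{lem:nil_orthogonal_decomposition}, and the $\Nil$-splittings \cref{eq:splitting_E_Nil}, and then to read off the answer from an elementary manipulation of cofibre sequences. Since \cref{diag:motivic_pullback_imcc} becomes a pushout after applying $E$, the object $E(\category{C}_{h\bbN})$ is the pushout of $E(\category{C})\xleftarrow{\psi}E(\im(\category{C}\oplus\category{C}))\xrightarrow{\phi}E(\category{C})$, where $\phi$ and $\psi$ are $E$ applied to the two projections $\pr_{\category{C}},\pr_{\category{B}}\colon\category{C}\artimes\category{C}\to\category{C}$ restricted along the inclusion $\im(\category{C}\oplus\category{C})\hookrightarrow\category{C}\artimes\category{C}$. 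That the structure maps of the pushout are precisely these projections follows from the Verdier sequence $\im(\category{C}\oplus\category{C})\to\category{C}\artimes\category{C}\to\category{C}\odot_{\category{C}\oplus\category{C}}^{\ind(\category{C})}\category{C}$ of \cref{cons:odot_product} together with the fact that $\category{C}\artimes\category{C}\to\category{C}\times\category{C}$ is an $E$-equivalence, which identifies $E(\category{C}\artimes\category{C})\simeq E(\category{C})\oplus E(\category{C})$ with projections $E(\pr_{\category{B}}),E(\pr_{\category{C}})$.

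Now I would feed in the decompositions. By \cref{lem:nil_orthogonal_decomposition,rec:semiorthogonal_decomposition} the inclusions $j_1,j_2$ give $E(\im(\category{C}\oplus\category{C}))\simeq E(\Nil_\alpha(\category{C}))\oplus E(\Nilbar_\alpha(\category{C}))$, and by \cref{eq:splitting_E_Nil} this in turn becomes $E(\category{C})\oplus\Omega NE_\alpha(\category{C})\oplus E(\category{C})\oplus\Omega\NKbar{E}_\alpha(\category{C})$, with the retraction $E(\fgt)$ being the projection onto the relevant $E(\category{C})$-summand on each side. From the explicit formulas for $j_1$ and $j_2$ one reads off $\pr_{\category{C}}\circ j_1=\pr_{\category{B}}\circ j_1=\fgt$ on $\Nil_\alpha(\category{C})$, while $\pr_{\category{C}}\circ j_2=\fgt$ and $\pr_{\category{B}}\circ j_2=\alpha\circ\fgt$ on $\Nilbar_\alpha(\category{C})$. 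Hence both $\phi$ and $\psi$ vanish on the two summands $\Omega NE_\alpha(\category{C})$ and $\Omega\NKbar{E}_\alpha(\category{C})$, the restriction of $\phi$ to $E(\category{C})\oplus E(\category{C})$ is $(\id,\id)$, and the restriction of $\psi$ is $(\id,E(\alpha))$. Consequently the pushout defining $E(\category{C}_{h\bbN})$ splits off $\Sigma\Omega NE_\alpha(\category{C})\oplus\Sigma\Omega\NKbar{E}_\alpha(\category{C})\simeq NE_\alpha(\category{C})\oplus\NKbar{E}_\alpha(\category{C})$, while the rest is the pushout of $E(\category{C})\xleftarrow{(\id,E(\alpha))}E(\category{C})\oplus E(\category{C})\xrightarrow{(\id,\id)}E(\category{C})$, which is $E(\category{C})_{h\bbN}$ by \cref{obs:colimit_over_BN} applied to $E(\category{C})$ with the endomorphism $E(\alpha)$. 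This gives the claimed equivalence $E(\category{C}_{h\bbN})\simeq E(\category{C})_{h\bbN}\oplus NE_\alpha(\category{C})\oplus\NKbar{E}_\alpha(\category{C})$, and it is natural since every construction entering its definition is.

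For the splitting of the assembly map I would note that the functor $\category{C}\oplus\category{C}\to\im(\category{C}\oplus\category{C})$ through which $(i_1,i_2)$ factors equals $(j_1\triv)\oplus(j_2\trivbar)$; hence after applying $E$ it is the inclusion of the $E(\category{C})\oplus E(\category{C})$-summand of $E(\im(\category{C}\oplus\category{C}))$, and moreover $\pr_{\category{B}}$ and $\pr_{\category{C}}$ restrict along it to $\id\oplus\alpha$ and $\id\oplus\id$. It therefore defines a morphism from the square obtained by applying $E$ to the defining pushout square of $\category{C}_{h\bbN}$ to the Land--Tamme square \cref{diag:motivic_pullback_imcc}. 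The span of the former has pushout $E(\category{C})_{h\bbN}$ by \cref{obs:colimit_over_BN}, and the induced map to $E(\category{C}_{h\bbN})$ is the assembly map; tracing it through the identifications above shows that it is the inclusion of the $E(\category{C})_{h\bbN}$-summand, which in particular is split.

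The step I expect to be the main obstacle is the careful identification of the structure maps $\phi,\psi$ of the Land--Tamme pushout with $E(\pr_{\category{C}}),E(\pr_{\category{B}})$, together with the parallel bookkeeping that the assembly map hits the $E(\category{C})_{h\bbN}$-summand as its canonical inclusion; granting that, the remainder reduces to \cref{lem:nil_orthogonal_decomposition}, the splittings \cref{eq:splitting_E_Nil}, and linear algebra with $2\times2$ matrices of maps (a single shear).
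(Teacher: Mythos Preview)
Your proposal is correct and follows essentially the same approach as the paper: both derive the splitting from the Land--Tamme pushout \cref{diag:motivic_pullback_imcc}, the orthogonal decomposition of \cref{lem:nil_orthogonal_decomposition}, the $\Nil$-splittings \cref{eq:splitting_E_Nil}, and the explicit identification of $\pr_i\circ j_k$ with $\fgt$ or $\alpha\circ\fgt$. The only cosmetic difference is that the paper packages the same data as a fiber sequence $E(\im(\category{C}\oplus\category{C}))\to E(\category{C}\artimes\category{C})\to E(\category{C}_{h\bbN})$ rather than a pushout, and then observes that the defining fiber sequence of $E(\category{C})_{h\bbN}$ sits inside it as a retract; your treatment of the assembly map is in fact slightly more explicit.
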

\begin{proof}
    From \cref{thm:motivic_pushout} we obtain a fiber sequence
    \begin{equation}\label{eq:fiber_sequence_BHS_motivic_pushout}
        E(\im(\category{C} \oplus \category{C})) \to E(\category{C} \artimes \category{C}) \to E(\category{C}_{h \bbN}).
    \end{equation}
    Recall that the two projections $\pr_i \colon \category{C} \artimes \category{C} \to \category{C}$ induce a splitting 
    \begin{equation*}
        \pr_1 \oplus \pr_2 \colon E(\category{C} \artimes \category{C}) \xrightarrow{\simeq} E(\category{C}) \oplus E(\category{C}).
    \end{equation*}
    Furthermore, by the orthogonal decomposition in \cref{lem:nil_orthogonal_decomposition}, the inclusions induce an equivalence $E(\Nil_{\alpha}(\category{C})) \oplus E(\Nil_{\alpha^{-1}}(\category{C})) \xrightarrow{\simeq} E(\im(\category{C} \oplus \category{C}))$ so the fiber sequence \cref{eq:fiber_sequence_BHS_motivic_pushout} reduces to the fiber sequence
    \begin{equation}
        E(\Nil_{\alpha}(\category{C})) \oplus E(\Nilbar_{\alpha}(\category{C})) \to E(\category{C}) \oplus E(\category{C}) \to E(\category{C}_{h \bbN}).
    \end{equation}
    Note that the composite $\Nil_{\alpha}(\category{C}) \xrightarrow{j_1} \category{C} \artimes \category{C} \xrightarrow{\pr_i} \category{C}$ is given by $\fgt$ for $i = 1,2$ and the composite $\Nilbar_{\alpha}(\category{C}) \xrightarrow{j_2} \category{C} \artimes \category{C} \xrightarrow{\pr_i} \category{C}$ is given by $\alpha \fgt$ for $i = 1$ and $\fgt$ for $i = 2$.
    Using the splitting \cref{eq:splitting_E_Nil} we obtain the fiber sequence
    \begin{equation*}
        E(\category{C}) \oplus E(\category{C}) \oplus \Omega NE_{\alpha}(\category{C}) \oplus \Omega NE_{\alpha}(\category{C}) 
        \xrightarrow{(\id, \id) \oplus (\alpha, \id) \oplus 0 \oplus 0} E(\category{C}) \oplus E(\category{C}) 
        \to E(\category{C}_{h \bbN}).
    \end{equation*}
    Note that it contains the fiber sequence
    \begin{equation*}
         E(\category{C}) \oplus E(\category{C})
        \xrightarrow{(\id, \id) \oplus (\alpha, \id)} 
        E(\category{C}) \oplus E(\category{C}) 
        \rightarrow E(\category{C})_{h \bbN}
    \end{equation*}
    as a retract, which proves the claimed splitting.
\end{proof}

\subsection{\texorpdfstring{$K$-theory of twisted endomorphisms}{K-theory of twisted endomorphisms}}\label{sec:k_theory_twisted_endomorphisms}

In this section we will explain how the $NE$-terms appearing in the splitting in \cref{thm:twisted_BHS} are related to $E(\End_{\alpha}(\ind(\category{C}))^\omega)$.
Land-Tamme showed a variant of this for the $E$-theory of tensor algebras in \cite[Corollary 4.5, Proposition 4.7]{LandTamme23}.
Their proof relies on the computation of certain endomorphism rings using their work on theory of $K$-theory of pullbacks \cite{LandTamme19}.
We give a more direct argument by working on the categorical level.
This also extends \cref{thm:splitting_ktheory_endomorphisms} to categories not generated by a single element.

Let us again consider an exact endofunctor $\alpha \colon \category{C} \to \category{C}$ of a perfect category and denote by $\alpha^R \colon \ind(\category{C}) \to \ind(\category{C})$ the right adjoint to $\ind(\alpha)$.
By abuse of notation, we will often not distinguish between $\ind(\alpha)$ and $\alpha$.
\begin{thm}\label{thm:splitting_ktheory_endomorphisms}
    The map $\freeend{\alpha^R} \colon \category{C} \to \End_{\alpha^R}(\ind(\category{C}))^\omega$ induces a splitting
    \begin{equation*}
        E(\End_{\alpha^R}(\ind(\category{C}))^\omega) \simeq E(\category{C}) \oplus NE_\alpha(\category{C}).
    \end{equation*}
    Similarly, the map $\freeendbar{\alpha^R} \colon \category{C} \to \Endbar_{\alpha^R}(\ind(\category{C}))^\omega$ induces a splitting
    \begin{equation*}
        E(\Endbar_{\alpha^R}(\ind(\category{C}))^\omega) \simeq E(\category{C}) \oplus \NKbar{E}_\alpha(\category{C}).
    \end{equation*}
\end{thm}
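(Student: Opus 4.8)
The plan is to place $\End_{\alpha^R}(\ind(\category{C}))^\omega$ into a localisation sequence whose quotient is the mapping torus $\category{C}_{h\bbN}$ and then to compare it with the sequence already used to prove \cref{thm:twisted_BHS}: the $E(\category{C})$-summand will come from the free twisted endomorphism and the $\Nil$-term from the comparison. I treat the first splitting only, the second being symmetric. Since $\alpha^R$ is a right adjoint, \cref{constr:localisation} furnishes the smashing Bousfield localisation $\loc{\alpha^R}\colon\End_{\alpha^R}(\ind(\category{C}))\to\aut_{\alpha^R}(\ind(\category{C}))$, whose kernel is generated under colimits by the trivial twisted endomorphisms by the argument of \cref{prop:characterisation_nilpotent_endomorphism} (which applies since $\alpha^R$ admits the left adjoint $\ind(\alpha)$). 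As $\freeend{\alpha^R}=\fgt^{L}$ preserves compact objects (its right adjoint $\fgt$ preserves filtered colimits), so does $\loc{\alpha^R}$, and $\aut_{\alpha^R}(\ind(\category{C}))\subseteq\End_{\alpha^R}(\ind(\category{C}))$ preserves filtered colimits; restricting to compact objects and invoking \cref{lem:identification_mapping_torus_twisted_automorphisms} yields a Karoubi sequence
\begin{equation*}
    \mathcal{N}\hookrightarrow\End_{\alpha^R}(\ind(\category{C}))^\omega\xrightarrow{\ \loc{\alpha^R}\ }\category{C}_{h\bbN},
\end{equation*}
hence a fibre sequence after applying $E$.

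To identify the kernel, observe that the adjunction $\ind(\alpha)\dashv\alpha^R$ makes the mapping spectra in $\End_{\alpha^R}(\ind(\category{C}))$ between trivial endomorphisms $(y,0),(y',0)$ with $y,y'\in\category{C}$ agree with those in $\Endbar_\alpha(\category{C})$ between $(y,0),(y',0)$; since $\triv\colon\category{C}\to\End_{\alpha^R}(\ind(\category{C}))$ lands in compact objects (its right adjoint $(x,r)\mapsto\fib(r\colon x\to\alpha^R x)$ preserves filtered colimits, $\alpha^R$ being filtered-colimit preserving) and generates $\mathcal{N}$, we get $\mathcal{N}\simeq\Nilbar_\alpha(\category{C})$. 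Feeding in the splitting $E(\Nilbar_\alpha(\category{C}))\simeq E(\category{C})\oplus\Omega\NKbar{E}_\alpha(\category{C})$ (the $\Nilbar$-analogue of \cref{eq:splitting_E_Nil}), the fibre sequence becomes
\begin{equation*}
    E(\category{C})\oplus\Omega\NKbar{E}_\alpha(\category{C})\longrightarrow E(\End_{\alpha^R}(\ind(\category{C}))^\omega)\xrightarrow{\ \loc{\alpha^R}\ }E(\category{C}_{h\bbN}).
\end{equation*}

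Next I would compare this with the Karoubi sequence $\im(\category{C}\oplus\category{C})\hookrightarrow\category{C}\artimes\category{C}\to\category{C}_{h\bbN}$ underlying \cref{thm:twisted_BHS} — note $\category{C}\odot_{\category{C}\oplus\category{C}}^{\ind(\category{C})}\category{C}\simeq\category{C}_{h\bbN}$ by \cref{thm:motivic_pushout}, so this is genuinely a Karoubi sequence — recalling the orthogonal decomposition $\im(\category{C}\oplus\category{C})\simeq\Nil_\alpha(\category{C})\oplus\Nilbar_\alpha(\category{C})$ of \cref{lem:nil_orthogonal_decomposition}. The comparison rests on two points. First, $\loc{\alpha^R}\circ\freeend{\alpha^R}$ is the structure functor $\category{C}\to\category{C}_{h\bbN}$ of the $\bbN$-orbit (it is $(\fgt\colon\aut_{\alpha^R}(\ind(\category{C}))\to\ind(\category{C}))^{L}$ on compact objects), so $E(\loc{\alpha^R}\circ\freeend{\alpha^R})$ is the canonical map $E(\category{C})\to E(\category{C})_{h\bbN}$ followed by the assembly map, which by \cref{thm:twisted_BHS} is the inclusion of the summand $E(\category{C})_{h\bbN}$. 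Second, the left adjoint $\rho\coloneqq\triv^{L}$ is a retraction of $\freeend{\alpha^R}$ — indeed $\rho\circ\freeend{\alpha^R}\simeq\id$ follows from $\fgt\circ\triv\simeq\id$ on passing to left adjoints, using $\freeend{\alpha^R}=\fgt^{L}$ — so $E(\freeend{\alpha^R})$ is split with some cofibre $X$, while the kernel inclusion restricted to the $E(\category{C})$-summand of $E(\Nilbar_\alpha(\category{C}))$ is $E(\triv)$, which one analyses against this splitting. Matching the two fibre sequences over $E(\category{C}_{h\bbN})$, the $\NKbar{E}_\alpha$-contributions cancel, the $E(\category{C})$'s reassemble the defining cofibre sequence $E(\category{C})\xrightarrow{\id-\alpha}E(\category{C})\to E(\category{C})_{h\bbN}$, and there remains $X\simeq NE_\alpha(\category{C})$, with $E(\category{C})$ included via $\freeend{\alpha^R}$. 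The statement for $\Endbar_{\alpha^R}$ is the mirror image (interchange $\Nil_\alpha\leftrightarrow\Nilbar_\alpha$ and $NE_\alpha\leftrightarrow\NKbar{E}_\alpha$), starting from the localisation of $\Endbar_{\alpha^R}(\ind(\category{C}))$ onto twisted automorphisms.

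The real work is the last step: upgrading ``both sequences lie over $\category{C}_{h\bbN}$'' to an honest map of fibre sequences and reading off $X\simeq NE_\alpha(\category{C})$. The cleanest route would be a functor $\End_{\alpha^R}(\ind(\category{C}))^\omega\to\category{C}\artimes\category{C}$ over $\category{C}_{h\bbN}$ which on kernels is the summand inclusion $\Nilbar_\alpha(\category{C})\hookrightarrow\Nil_\alpha(\category{C})\oplus\Nilbar_\alpha(\category{C})$ and is compatible with $\freeend{\alpha^R}$; the catch is that compact objects of $\End_{\alpha^R}(\ind(\category{C}))$ need not have compact underlying object, so the functor $j_2$ from the proof of \cref{lem:nil_orthogonal_decomposition} cannot be re-used verbatim, and one must keep precise track of the map $E(\rho)\circ E(\triv)$ and of the connecting map of the localisation sequence.
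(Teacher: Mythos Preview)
Your strategy is genuinely different from the paper's and is a reasonable idea, but the gap you flag in the final paragraph is real and not merely cosmetic. You correctly set up the Karoubi sequence $\mathcal{N}\to\End_{\alpha^R}(\ind(\category{C}))^\omega\to\category{C}_{h\bbN}$ and you know $E(\category{C}_{h\bbN})$ from \cref{thm:twisted_BHS}, but to extract $X\simeq NE_\alpha(\category{C})$ you must control the \emph{boundary map} of this sequence, not just the terms. Saying ``the $\NKbar{E}_\alpha$-contributions cancel'' is an Euler-characteristic heuristic; in a stable category it requires an actual map of fibre sequences inducing the summand inclusion $\Nilbar_\alpha(\category{C})\hookrightarrow\Nil_\alpha(\category{C})\oplus\Nilbar_\alpha(\category{C})$ on kernels, and you correctly observe that no functor $\End_{\alpha^R}(\ind(\category{C}))^\omega\to\category{C}\artimes\category{C}$ is available since compact twisted endomorphisms need not have compact underlying object. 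There is a second, smaller issue: your identification $\mathcal{N}\simeq\Nilbar_\alpha(\category{C})$ relies on transporting the proof of \cref{prop:characterisation_nilpotent_endomorphism}, but the key step there computes $\fib([1])$ on a free generator as $\triv(\Omega\alpha z)$; in your setting the analogous computation gives $\triv(\Omega\alpha^R z)$, and $\alpha^R z$ need not lie in $\category{C}$, so one must argue separately that this object is still generated by $\triv(y)$ for $y\in\category{C}$.

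The paper sidesteps both problems by using a different pushout presentation: it writes $\End_{\alpha^R}(\ind(\category{C}))^\omega$ as the pushout of $\category{C}\xleftarrow{\alpha\oplus\id}\category{C}\oplus\category{C}\xrightarrow{a_0\oplus a_1}\category{C}\otimes\Delta^1$ (rather than working over $\category{C}_{h\bbN}$), applies Land--Tamme again, and shows that the resulting $\im(\category{C}\oplus\category{C})\subseteq\category{C}\artimes(\category{C}\otimes\Delta^1)$ has a semiorthogonal decomposition by $\Nil_\alpha(\category{C})$ and $\category{C}$. Because $\category{C}\otimes\Delta^1$ also splits as $\category{C}\oplus\category{C}$ (via $a_0,a_1$), the motivic pushout square unwinds directly to $\cofib(E(\freeend{\alpha^R}))\simeq NE_\alpha(\category{C})$ without any comparison to a second localisation sequence and without ever invoking the splitting of $E(\category{C}_{h\bbN})$. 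The trade-off is that the paper's route requires a second round of the $\odot$-analysis, whereas yours would recycle \cref{thm:twisted_BHS}; but the paper's bookkeeping is local (one semiorthogonal decomposition) while yours is global (matching boundary maps across two sequences).
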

Note that the main result in \cref{thm:splitting_ktheory_endomorphisms} is not the existence of a splitting for $E(\End_{\alpha^R}(\ind(\category{C}))^\omega)$, but rather that (up to a shift) the error terms are the same as those appearing in the splitting for $E(\Nil_\alpha(\category{C}))$.

We will only prove the splitting for $E(\End_{\alpha^R}(\ind(\category{C}))^\omega)$, the argument for the splitting of $E(\Endbar_{\alpha^R}(\ind(\category{C}))^\omega)$ being analogous.
Our approach is similar to the proof of \cref{thm:twisted_BHS} by first presenting $\Endbar_{\alpha^R}(\ind(\category{C}))^\omega$ as a pushout in $\catperf$ and then applying Land-Tamme's machinery from \cref{sec:K_theory_pushouts} to it.
Let us begin by recalling the tensoring construction for perfect categories.
\begin{recollect}[Tensoring of $\catperf$ over $\cat$]
    Given a category $I$, the functor $\func(I, -) \colon \catperf \to \catperf$ admits a left adjoint $- \otimes I$, the tensoring of $\catperf$ over $\cat$.
    It is the initial perfect category together with a functor $\category{C} \times I \to \category{C} \otimes I$ which is exact in the first variable and has an explicit description given by $\category{C} \otimes I = \func(I\op, \ind(\category{C}))^\omega$.
    Under this identification, for an object $i \in I$ the inclusion $a_i \colon \category{C} \times \{i\} \to \category{C} \otimes I$ is left adjoint to evaluation $\eval_i \colon \func(I\op, \ind(\category{C})) \to \ind(\category{C})$.
    More details on the non idempotent complete version of this construction can be found in \cite[Remark 6.4.2]{Hermitian1} or \cite[Section 2]{Saunier22}.
\end{recollect}
We are only be interested in the case $I = \Delta^1$ so that $\category{C} \otimes \Delta^1 \simeq \func((\Delta^1)\op, \ind(\category{C}))^\omega$.
The left adjoints to the evaluation functors are explicitly given by $a_0(x) = (x \leftarrow 0)$ and $a_1(x) = (x \xleftarrow{\id} x)$, where we use leftwards pointing arrows to indicate that we are working in $(\Delta^1)\op$.
We can now give a presentation of $\End_{\alpha^R}(\ind(\category{C}))^\omega$ as a pushout.
\begin{lem}
    There is a pushout square in $\catperf$ of the form
    \begin{equation}\label{diag:pushout_twisted_endomorphisms}
    \begin{tikzcd}
        \category{C} \oplus \category{C} \ar[r, "{a_0 \oplus a_1}"] \ar[d, "{\alpha \oplus \id}"'] \ar[dr, phantom, very near end, "\ulcorner"]
        & \category{C}\otimes\Delta^1 \ar[d]
        \\
        \category{C} \ar[r, "\freeend{\alpha^R}"]
        & \End_{\alpha^R}(\ind(\category{C}))^\omega.
    \end{tikzcd}
    \end{equation}
\end{lem}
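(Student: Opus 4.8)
The plan is to construct the pushout abstractly and then identify it with $\End_{\alpha^R}(\ind(\category{C}))^\omega$. Recall (cf.\ the proof of \cref{lem:identification_mapping_torus_twisted_automorphisms}) that a pushout in $\catperf$ is computed by taking compact objects in the pushout of the $\ind$-completed diagram in $\lpresentable$, and that the latter pushout is equivalently the pullback in $\rpresentable$ of the cospan of right adjoints, formed on underlying categories in $\cat$. Since $\category{C}\otimes\Delta^1 = \func((\Delta^1)\op,\ind(\category{C}))^\omega$, applying $\ind(-)$ to the span $\category{C}\xleftarrow{\alpha\oplus\id}\category{C}\oplus\category{C}\xrightarrow{a_0\oplus a_1}\category{C}\otimes\Delta^1$ yields the span
\[ \ind(\category{C})\xleftarrow{\;\ind(\alpha)\oplus\id\;}\ind(\category{C})\oplus\ind(\category{C})\xrightarrow{\;a_0\oplus a_1\;}\func((\Delta^1)\op,\ind(\category{C})) \]
in $\lpresentable$, all of whose legs are left adjoints, and it remains to compute its pushout.

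Next I would pass to the cospan of right adjoints. The functor $\ind(\alpha)\oplus\id$ sends $(x,y)\mapsto\ind(\alpha)(x)\oplus y$, hence has right adjoint $z\mapsto(\alpha^R z,z)$, while $a_0\oplus a_1$ sends $(x,y)\mapsto a_0(x)\oplus a_1(y)$, hence has right adjoint $F\mapsto(\eval_0 F,\eval_1 F)$, using $a_i\dashv\eval_i$. Therefore the pushout is the pullback $Q$ in $\rpresentable$ of $\ind(\category{C})\xrightarrow{(\alpha^R,\id)}\ind(\category{C})\oplus\ind(\category{C})\xleftarrow{(\eval_0,\eval_1)}\func((\Delta^1)\op,\ind(\category{C}))$. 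Unwinding, an object of $Q$ consists of $z\in\ind(\category{C})$ and $F\in\func((\Delta^1)\op,\ind(\category{C}))$ together with equivalences $\eval_0 F\simeq\alpha^R z$ and $\eval_1 F\simeq z$; since $F$ is precisely the datum of a single morphism from its $\eval_1$-value to its $\eval_0$-value, this is exactly the datum of $z$ together with a map $z\to\alpha^R z$, i.e.\ an object of $\laxeq(\id,\alpha^R) = \End_{\alpha^R}(\ind(\category{C}))$. Under this identification the projection $Q\to\ind(\category{C})$ becomes the forgetful functor $\fgt$, so the corresponding leg $\ind(\category{C})\to Q$ of the pushout is its left adjoint, which is $\freeend{\alpha^R}$ by \cref{constr:free_endomorphism}; the other leg is the tautological functor out of $\func((\Delta^1)\op,\ind(\category{C}))$. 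Restricting to compact objects then produces the asserted pushout square in $\catperf$ with bottom horizontal map $\freeend{\alpha^R}$. This last step is legitimate because all four corners are compactly generated — the corner $\End_{\alpha^R}(\ind(\category{C}))$ by \cref{lem:twisted_endomorphisms_compactly_generated}, noting that $\alpha^R$ preserves colimits (it is exact and preserves filtered colimits, since $\ind(\alpha)$ preserves compact objects) — and because $\freeend{\alpha^R}$ preserves compact objects, its right adjoint $\fgt$ being colimit-preserving.

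The only real obstacle is careful bookkeeping of variance, caused by the appearance of $(\Delta^1)\op$ in the definition of $\category{C}\otimes\Delta^1$: one must check that the unique non-identity morphism of an object $F\in\func((\Delta^1)\op,\ind(\category{C}))$ runs from its $\eval_1$-value to its $\eval_0$-value, so that after forming the pullback against $(\alpha^R,\id)$ it becomes a map $z\to\alpha^R z$ and not $\alpha^R z\to z$ — the latter would instead produce $\Endbar_{\alpha^R}(\ind(\category{C}))$ — and one should confirm that the explicit formulas $a_0(x) = (x\leftarrow 0)$ and $a_1(x) = (x\xleftarrow{\id}x)$ really are the left adjoints to $\eval_0$ and $\eval_1$. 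Finally, one records that the two legs of the abstract pushout agree on the nose with $\freeend{\alpha^R}$ and with the canonical functor from $\category{C}\otimes\Delta^1$, and that the resulting commuting square carries the stated labels $a_0\oplus a_1$, $\alpha\oplus\id$ and $\freeend{\alpha^R}$; this is immediate from the adjoint correspondence above, and is confirmed on the two generating factors by the identities $g(a_0 x)\simeq\freeend{\alpha^R}(\alpha x)$ and $g(a_1 y)\simeq\freeend{\alpha^R}(y)$ for the canonical leg $g$.
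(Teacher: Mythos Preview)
Your proposal is correct and follows essentially the same approach as the paper: pass to $\ind$-completions, take right adjoints, identify the resulting pullback in $\rpresentable$ with the defining pullback for the lax equaliser $\End_{\alpha^R}(\ind(\category{C}))$, and then restrict to compact objects. You supply more detail than the paper does (the variance bookkeeping for $(\Delta^1)\op$, the justification that $\End_{\alpha^R}(\ind(\category{C}))$ is compactly generated via \cref{lem:twisted_endomorphisms_compactly_generated}, and the explicit check that the bottom leg is $\freeend{\alpha^R}$), but the argument is the same.
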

\begin{proof}
    To calculate the pushout of the upper left span in \cref{diag:pushout_twisted_endomorphisms} one instead passes to right adjoints after $\ind$ completion and forms the pullback.
    The upper left span then becomes the cospan 
    \begin{equation*}
        \ind(\category{C}) \xrightarrow{(\alpha^R, \id)} \ind(\category{C}) \times \ind(\category{C}) \xleftarrow{(\eval_0, \eval_1)} \func((\Delta^1)\op, \ind(\category{C})).
    \end{equation*}
    After the identification $\Delta^1 \simeq (\Delta^1)\op$ this becomes the usual pullback square from \cref{diag:pullback_lax_equlaiser} defining the lax equaliser 
    \begin{equation*}
    \begin{tikzcd}
        \End_{\alpha^R}(\ind(\category{C})) \ar[r] \ar[d] \ar[dr, phantom, very near start, "\lrcorner"]
        &\ind(\category{C})^{\Delta^1} \ar[d, "{(\eval_0, \eval_1)}"] \\
        \ind(\category{C}) \ar[r, "{(\id, \alpha^R)}"]
        & \ind(\category{C}) \times \ind(\category{C}).
    \end{tikzcd}
    \end{equation*}
    The left vertical arrow here is given by $\fgt$, so its left adjoint $\freeend{\alpha^R}$ appears in the diagram in the statement.
\end{proof}
Applying Land-Tamme's theory of $K$-theory of pushouts, more precisely \cref{cons:odot_product,cons:odot_for_pushout,thm:motivic_pushout}, we obtain the commutative square
\begin{equation}\label{diag:motivic_pushout_twisted_endomorphisms}
\begin{tikzcd}
    \im(\category{C} \oplus \category{C}) \ar[r, "i_0 \oplus i_1"] \ar[d, "\alpha \oplus \id"] 
    & \category{C}\otimes\Delta^1 \ar[d]
    \\
    \category{C} \ar[r]
    & \End_{\alpha^R}(\ind(\category{C}))^\omega
\end{tikzcd}
\end{equation}
which becomes an equivalence after applying any localising invariant.
We again use the notation $\im(\category{C} \oplus \category{C})$ even though it is different from the category denoted by the same symbol in \cref{sec:proof_decomposition}.
As before, the main difficulty is the description of this category.
The lax square in \cref{diag:semicommutative_square_motiveic_pushout} associated to the pushout \cref{diag:pushout_twisted_endomorphisms} is given by
\begin{equation*}
\begin{tikzcd}
    \category{C} \oplus \category{C} \ar[r, "{a_0 \oplus a_1}"] \ar[d, "{\alpha \oplus \id}"']
    & \category{C}\otimes\Delta^1 \ar[d, "\alpha \eval_0 \oplus \eval_1"]
    \\
    \category{C} \ar[r, "\id"'] \ar[ur, Rightarrow, "f"]
    & \category{C}.
\end{tikzcd}
\end{equation*}
Note here that the right adjoint $(\eval_0, \eval_1)$ to $a_0 \oplus a_1$ preserves compact objects as it sends the generators $a_0(x)$ and $a_1(x)$ of $\category{C}\otimes\Delta^1$ to compact objects in $\ind(\category{C})$.
The transformation $f$ is given on $(x,y)$ by the map $\id_{\alpha(x)} + \id_y \colon \alpha(x) \oplus y \to \alpha(x) \oplus \alpha(y) \oplus y$.
The corresponding partially lax pullback $\category{C} \artimes (\category{C}\otimes\Delta^1)$ then has objects $(x, y \xleftarrow{f} z, r \colon x \to \alpha(y) \oplus z)$ with $x \in \category{C}$ and $f \in \category{C}\otimes\Delta^1$.
The induced functor $(i_1, i_2) \colon \category{C} \oplus \category{C} \to \category{C} \artimes (\category{C}\otimes\Delta^1)$ is given by
\begin{align*}
    &i_1(x) = (\alpha(x), a_0(x), (\id, 0) \colon \alpha(x) \to \alpha(x) \oplus 0), \\
    &i_2(x) = (x, a_1(x) , (0, \id) \colon x \to \alpha(x) \oplus x).
\end{align*}
There is also a functor
\begin{equation*}
    j \colon \End_{\alpha}(\category{C}) \to \category{C} \artimes (\category{C}\otimes\Delta^1), \hspace{3mm} (x, f \colon x \to \alpha(x)) \mapsto (x, a_1(x), (f, \id) \colon x \to \alpha(x) \oplus x).
\end{equation*}
\begin{lem}
   The functors  $i_1$ and $j$ are fully faithful.
\end{lem}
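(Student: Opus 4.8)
The plan is to use the description of mapping spaces in partially lax pullbacks from \cref{diag:mapping_anima_lax_pullback} and reduce the claimed full faithfulness to the analogous (already established) statements about lax equalisers and the tensoring $\category{C} \otimes \Delta^1$. Since both $i_1$ and $j$ land in the subcategory where the first component $\alpha(y)$ of the map $r\colon x \to \alpha(y) \oplus z$ is hit by an equivalence in a suitable sense, one should be able to split off that first component.

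First I would treat $j$, which is the more substantial case and the one actually needed later. Given objects $(x,f), (x',f') \in \End_\alpha(\category{C})$, I would write down the pullback square \cref{diag:mapping_anima_lax_pullback} computing $\Hom_{\category{C} \artimes (\category{C}\otimes\Delta^1)}(j(x,f), j(x',f'))$. Its terms are $\Hom_{\category{C}\otimes\Delta^1}(a_1(x), a_1(x')) \simeq \Hom_{\category{C}}(x, x')$ (using that $a_1$ is fully faithful, being left adjoint to $\eval_1$ which is a localisation onto the constant diagrams, or directly from the explicit description $a_1(x) = (x \xleftarrow{\id} x)$), $\Hom_{\category{C}}(x, x')$ on the other corner, and $\Hom_{\category{C}}(x, \alpha(x') \oplus x') \simeq \Hom_{\category{C}}(x, \alpha(x')) \times \Hom_{\category{C}}(x, x')$ in the bottom right. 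The key point is that the map $r'_* \circ f \colon \Hom_{\category{C}}(x,x') \to \Hom_{\category{C}}(x, \alpha(x') \oplus x')$ coming from $r' = (f', \id)$ has second component the identity, so the fiber product over $\Hom_{\category{C}}(x,x')$ in that factor is trivial and what remains is exactly the equaliser of $f^*\alpha$ and $f'_*$ from $\Hom_{\category{C}}(x,x')$ to $\Hom_{\category{C}}(x, \alpha(x'))$. By \cref{eq:mapping_anima_lax_equaliser} this is $\Hom_{\End_\alpha(\category{C})}((x,f),(x',f'))$, as desired. I would then run the identical argument for $i_1$: here both source and target have the diagram component $a_0(x) = (x \leftarrow 0)$ and the map $r = (\id, 0)$ whose first component is the identity and whose second component is $0$; the same cancellation shows $\Hom(i_1(x), i_1(x')) \simeq \Hom_{\category{C}}(x, x')$, so $i_1$ is fully faithful (indeed it factors through $j$ via $\triv$ followed by $\alpha$, but the direct computation is cleanest).

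The main obstacle is bookkeeping rather than anything conceptual: one must be careful that $\category{C} \otimes \Delta^1 = \func((\Delta^1)\op, \ind(\category{C}))^\omega$ really does have $a_1$ fully faithful and that the natural transformation $f$ appearing in $r'_* \circ f$ is the one specified (namely with identity second component after the identification), so that the relevant corner of the pullback square collapses. I would make sure to state explicitly that $a_0$ and $a_1$ are fully faithful — this follows since $\eval_0, \eval_1$ are jointly conservative and $a_i \dashv \eval_i$ with the unit $\id \to \eval_i a_i$ an equivalence (clear from $\eval_1 a_1(x) = x$ and $\eval_0 a_0(x) = x$). Everything else is a diagram chase through \cref{diag:mapping_anima_lax_pullback}, and I expect no genuine difficulty beyond organising the pullback-of-pullbacks carefully.
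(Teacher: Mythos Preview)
Your proposal is correct and follows essentially the same route as the paper: write out the pullback square \cref{diag:mapping_anima_lax_pullback}, use that $a_0$ and $a_1$ are fully faithful (and that $\eval_0$ and $\eval_1$ agree on $\Hom(a_1(x),a_1(y))$), and observe that the identity component collapses the pullback to the equaliser \cref{eq:mapping_anima_lax_equaliser}. One small correction: your parenthetical that $i_1$ factors through $j$ via $\triv$ followed by $\alpha$ is false (the second components $a_0(x)$ and $a_1(\alpha x)$ are different objects of $\category{C}\otimes\Delta^1$), but since you do not use this and do the direct computation anyway, it does not affect the argument.
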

\begin{proof}
    From \cref{diag:mapping_anima_lax_pullback} we have the pullback square
    \begin{equation*}
    \begin{tikzcd}
        \Hom_{\category{C} \artimes (\category{C}\otimes\Delta^1)}(i_1(x), i_1(y)) \ar[r] \ar[d] \ar[dr, phantom, very near start, "\lrcorner"]
        & \Hom_{\category{C}\otimes\Delta^1}(a_0(x), a_0(y)) \ar[d, "\alpha \eval_0"]
        \\
        \Hom_{\category{C}}(\alpha(x),\alpha(y)) \ar[r, "\id"]
        & \Hom_{\category{C}}(\alpha(x),\alpha(y)).
    \end{tikzcd}
    \end{equation*}
    Note that $\eval_0 \colon \Hom_{\category{C}\otimes\Delta^1}(a_0(x), a_0(y)) \to \Hom_{\category{C}}(x, y)$ is an equivalence from which the claim about $i_1$ follows.
    Similarly, we have the pullback square
    \begin{equation*}
    \begin{tikzcd}
        \Hom_{\category{C} \artimes (\category{C}\otimes\Delta^1)}(j(x,f), j(y,g)) \ar[r] \ar[d] \ar[dr, phantom, very near start, "\lrcorner"]
        & \Hom_{\category{C}\otimes\Delta^1}(a_1(x), a_1(y)) \ar[d, "(f^* \alpha \eval_0) + \eval_1"]
        \\
        \Hom_{\category{C}}(x, y) \ar[r, "g_* + \id"]
        & \Hom_{\category{C}}(x, \alpha(y) \oplus y).
    \end{tikzcd}
    \end{equation*}
    
    As $\eval_0, \eval_1 \colon \Hom_{\category{C}\otimes\Delta^1}(a_1(x), a_1(y)) \to \Hom_{\category{C}}(x,y)$ are equivalences, this pullback identifies with $\equaliser(f^* \alpha, g_* \colon \Hom_{\category{C}}(x, y) \to \Hom_{\category{C}}(x, \alpha(y)) \simeq \Hom_{\End_{\alpha}(\category{C})}((x,f), (y,g))$.
\end{proof}

\begin{lem}\label{lem:semiorthogonal_decomposition_twisted_endomorphism}
    The subcategories $\Nil_\alpha(\category{C})$ and $\category{C}$ (included via $j$ and $i_1$) form a semiorthogonal decomposition of $\im(\category{C} \oplus \category{C})$.
\end{lem}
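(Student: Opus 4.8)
The plan is to verify the two conditions in \cref{rec:semiorthogonal_decomposition}: that $\Nil_\alpha(\category{C})$ and $\category{C}$, included via $j$ and $i_1$, land in $\im(\category{C}\oplus\category{C})$ and generate it, and that there are no maps from the $j$-part into the $i_1$-part. The previous lemma already gives that $i_1$ and $j$ are fully faithful, hence (their sources being perfect) present $\category{C}$ and $\Nil_\alpha(\category{C})$ as full perfect subcategories; only the containment in $\im(\category{C}\oplus\category{C})$, the generation, and the orthogonality remain.

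The one elementary observation driving everything is the identity $i_2 = j\circ\triv$, read off from the definitions: $j(\triv(x)) = j(x, 0\colon x\to\alpha(x)) = (x, a_1(x), (0,\id)\colon x\to\alpha(x)\oplus x) = i_2(x)$. Granting it, generation is formal. Since $\Nil_\alpha(\category{C})$ is by definition the perfect subcategory of $\End_\alpha(\category{C})$ generated by $\triv(\category{C})$ and $j$ is exact and fully faithful, $j(\Nil_\alpha(\category{C}))$ is exactly the perfect subcategory of $\category{C}\artimes(\category{C}\otimes\Delta^1)$ generated by $i_2(\category{C})$; in particular it is contained in $\im(\category{C}\oplus\category{C})$, which by \cref{cons:odot_product} is the perfect subcategory generated by the image of $(i_1,i_2)$, i.e.\ by $i_1(\category{C})\cup i_2(\category{C})$. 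As $i_1(\category{C})$ also lies in $\im(\category{C}\oplus\category{C})$ and $i_2(\category{C})\subseteq j(\Nil_\alpha(\category{C}))$, the subcategories $i_1(\category{C})$ and $j(\Nil_\alpha(\category{C}))$ generate $\im(\category{C}\oplus\category{C})$.

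For the orthogonality I would compute $\Hom_{\category{C}\artimes(\category{C}\otimes\Delta^1)}(j(y,\phi), i_1(x))$ directly from the pullback formula \cref{diag:mapping_anima_lax_pullback}, for all $(y,\phi)\in\End_\alpha(\category{C})$ and $x\in\category{C}$ (this covers $\Nil_\alpha(\category{C})$, a full subcategory of $\End_\alpha(\category{C})$). Writing $j(y,\phi) = (y, a_1(y), (\phi,\id))$ and $i_1(x) = (\alpha(x), a_0(x), (\id,0))$, this mapping space is the pullback of
\begin{equation*}
\Hom_{\category{C}}(y,\alpha(x)) \xrightarrow{\ (\id,0)_*\ } \Hom_{\category{C}}(y,\alpha(x)\oplus 0) \longleftarrow \Hom_{\category{C}\otimes\Delta^1}(a_1(y), a_0(x)).
\end{equation*}
The left map is an equivalence, since $(\id,0)\colon\alpha(x)\to\alpha(x)\oplus 0$ is one, and the right-hand term is contractible, because $a_1$ is left adjoint to $\eval_1$ and hence $\Hom_{\category{C}\otimes\Delta^1}(a_1(y), a_0(x))\simeq\Hom_{\ind(\category{C})}(y, \eval_1 a_0(x)) = \Hom_{\ind(\category{C})}(y, 0)\simeq 0$. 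So the pullback vanishes; in the convention of \cref{rec:semiorthogonal_decomposition} this puts $i_1(\category{C})$ on the left and $j(\Nil_\alpha(\category{C}))$ on the right.

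I do not expect any genuine obstacle: the proof is entirely formal once the objects $i_1, i_2, j$ inside the partially lax pullback $\category{C}\artimes(\category{C}\otimes\Delta^1)$ are tracked correctly --- the only thing requiring care is the difference between $a_0(x) = (x\leftarrow 0)$ and $a_1(x) = (x\xleftarrow{\id}x)$ and the transformation $f$ defining $\im(\category{C}\oplus\category{C})$, which together fix the precise structure maps above. It is also worth noting that this is genuinely only a semiorthogonal and not an orthogonal decomposition: already for $\alpha = 0$ the mapping space $\Hom_{\category{C}\artimes(\category{C}\otimes\Delta^1)}(i_1(x), j(y,\phi))$ can be nonzero.
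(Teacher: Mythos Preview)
Your proof is correct and follows essentially the same route as the paper: the generation argument hinges on the identity $i_2 \simeq j\circ\triv$, and the semiorthogonality is obtained from the pullback formula \cref{diag:mapping_anima_lax_pullback} together with the adjunction $a_1 \dashv \eval_1$ giving $\Hom_{\category{C}\otimes\Delta^1}(a_1(y),a_0(x))\simeq \Hom(y,0)\simeq 0$. Your added remark that the decomposition is genuinely only semiorthogonal is a nice observation not present in the paper.
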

\begin{proof}
    It is clear that the perfect subcategory generated by $\Nil_\alpha(\category{C})$ and $\category{C}$ contains $\im(\category{C} \oplus \category{C})$ as $i_2 \simeq j \circ \triv$.
    But $\Nil_\alpha(\category{C})$ is also contained in $\im(\category{C} \oplus \category{C})$ as it is generated by $\category{C}$.
    Together, this shows that $\Nil_\alpha(\category{C})$ and $\category{C}$ generate $\im(\category{C} \oplus \category{C})$.
    For semiorthogonality, \cref{diag:mapping_anima_lax_pullback} gives us the pullback
    \begin{equation*}
    \begin{tikzcd}
        \Hom_{\category{C} \artimes (\category{C}\otimes\Delta^1)}(j(x,f), i_1(y)) \ar[r] \ar[d] \ar[dr, phantom, very near start, "\lrcorner"]
        & \Hom_{\category{C}\otimes\Delta^1}(a_1(x), a_0(y)) \ar[d, "f_* \alpha \eval_0"]
        \\
        \Hom_{\category{C}}(x, \alpha(y)) \ar[r, "\id"]
        & \Hom_{\category{C}}(x, \alpha(y)).
    \end{tikzcd}
    \end{equation*}
    We have $\Hom_{\category{C}\otimes\Delta^1}(a_1(x), a_0(y)) \simeq \Hom_{\category{C}}(x, 0) \simeq 0$, using that $a_1$ is left adjoint to $\eval_1$, which proves the desired vanishing.
\end{proof}
As a final step towards the identification of $\Nil$-terms, we need to show a splitting for the $K$-theory of $\category{C} \otimes \Delta^1$.
\begin{lem}\label{lem:semiorthogonal_decomposition_tensor}
    The functors $a_0, a_1 \colon \category{C} \hookrightarrow \category{C} \otimes \Delta^1$ form a semiorthogonal decomposition of $\category{C} \otimes \Delta^1$
\end{lem}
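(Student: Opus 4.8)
The plan is to verify the three defining conditions of \cref{rec:semiorthogonal_decomposition} for the pair $\bigl(a_0(\category{C}), a_1(\category{C})\bigr)$ inside $\category{C}\otimes\Delta^1 \simeq \func((\Delta^1)\op, \ind(\category{C}))^\omega$, using the explicit formulas $a_0(x) = (x \leftarrow 0)$, $a_1(x) = (x \xleftarrow{\id} x)$ and the adjunctions $a_i \dashv \eval_i$. Two of the conditions are immediate. First, both $a_0$ and $a_1$ are fully faithful, since the units $\id_{\category{C}} \to \eval_i a_i$ of these adjunctions are equivalences; hence $a_0(\category{C})$ and $a_1(\category{C})$ are genuine full perfect subcategories. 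Second, semiorthogonality is a one-line computation: using $a_1 \dashv \eval_1$ one gets $\Hom_{\category{C}\otimes\Delta^1}(a_1(x), a_0(y)) \simeq \Hom_{\category{C}}(x, \eval_1 a_0(y)) \simeq \Hom_{\category{C}}(x, 0) \simeq 0$, because $a_0(y)$ has value $0$ at the vertex $1 \in (\Delta^1)\op$. This identifies $a_0(\category{C})$ as the subcategory $\category{D}_0$ and $a_1(\category{C})$ as $\category{D}_1$.

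The remaining point is that $a_0(\category{C})$ and $a_1(\category{C})$ generate $\category{C}\otimes\Delta^1$ as a perfect category, and this is the only step that needs an argument. Given a compact object, write it as $F = (F(0) \xleftarrow{\phi} F(1))$. I would first note that $F(0) = \eval_0 F$ and $F(1) = \eval_1 F$ are again compact, hence lie in $\ind(\category{C})^\omega \simeq \category{C}$: each $\eval_i$ has a right adjoint given by pointwise right Kan extension along the vertex inclusion $\{i\} \hookrightarrow (\Delta^1)\op$, which is computed by a finite limit and therefore commutes with filtered colimits, so $\eval_i$ preserves compact objects. Then I would consider the counit $\varepsilon \colon a_1 \eval_1 F \to F$: evaluated at the vertex $1$ it is an equivalence by the triangle identities, and evaluated at $0$ it is the structure map $\phi$, so taking cofibres pointwise yields a cofibre sequence $a_1(F(1)) \xrightarrow{\varepsilon} F \to a_0(\cofib(\phi))$ with $F(1), \cofib(\phi) \in \category{C}$. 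This exhibits an arbitrary object of $\category{C}\otimes\Delta^1$ as a finite colimit of objects of $a_0(\category{C})$ and $a_1(\category{C})$, which is what we want.

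I do not expect a serious obstacle here; the only thing to be slightly careful about is that a compact object of $\func((\Delta^1)\op, \ind(\category{C}))$ has compact values, which is handled by the right-adjoint-preserves-filtered-colimits remark above (alternatively one can compute these right adjoints directly: they send $c$ to $(c \xleftarrow{\id} c)$ and to $(0 \leftarrow c)$ respectively, both visibly colimit-preserving). A more formal route would be to quote the general fact that for a finite category $K$ the perfect category $\func(K, \ind(\category{C}))^\omega$ is generated by the left Kan extensions of objects of $\category{C}$ along the vertex inclusions $\{k\} \hookrightarrow K$, and to observe that for $K = (\Delta^1)\op$ these Kan extensions are precisely $a_0$ and $a_1$; but the cofibre-sequence argument is self-contained and matches the style of the preceding lemmas.
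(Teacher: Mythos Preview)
Your proof is correct and essentially matches the paper's. Fully faithfulness and semiorthogonality are handled identically (via the adjunctions $a_i \dashv \eval_i$ and the computation $\Hom(a_1(x),a_0(y))\simeq\Hom(x,0)$). For generation, the paper takes the shorter ``more formal route'' you mention at the end: it simply observes that the $a_i(x)$ are compact generators of the presentable category $\func((\Delta^1)\op,\ind(\category{C}))$ because the $a_i$ are left adjoint to the jointly conservative colimit-preserving functors $\eval_i$, and hence generate the compact objects. Your explicit cofibre sequence $a_1(F(1))\to F\to a_0(\cofib(\phi))$ is a valid alternative that gives a bit more, namely an explicit decomposition of each object; the only extra cost is the check that $\eval_i$ preserves compact objects, which you handle correctly.
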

\begin{proof}
    Recall that $a_0$ and $a_1$ are left adjoint to $\eval_0$ and $\eval_1$.
    From this it is easy to see that $a_0$ and $a_1$ are fully faithful.
    They generate $\category{C} \otimes \Delta^1$ as their images are compact generators of the stable presentable category $\func((\Delta^1)\op, \ind(\category{C}))$, again using that they are left adjoint to $\eval$.
    Semiorthogonality follows from $\Hom_{\category{C}\otimes\Delta^1}(a_1(x), a_0(y)) \simeq \Hom_{\category{C}}(x, 0) \simeq 0$.
\end{proof}

\begin{proof}[Proof of \cref{thm:splitting_ktheory_endomorphisms}.]
    By \cref{thm:motivic_pushout}, applying $E$ to the commutative square \cref{diag:motivic_pushout_twisted_endomorphisms} gives the pushout
    \begin{equation*}
    \begin{tikzcd}
        E(\im(\category{C} \oplus \category{C})) \ar[r, "i_0 \oplus i_1"] \ar[d, "\alpha \oplus \id"] \ar[dr, phantom, very near end, "\ulcorner"]
        & E(\category{C}\otimes\Delta^1) \ar[d]
        \\
        E(\category{C}) \ar[r, "\freeend{\alpha^R}"]
        & E(\End_{\alpha^R}(\ind(\category{C}))^\omega).
    \end{tikzcd}
    \end{equation*}
    Splitting the top terms using the semiorthogonal decompositions from \cref{lem:semiorthogonal_decomposition_twisted_endomorphism,lem:semiorthogonal_decomposition_tensor} together with the splitting $E(\Nil_\alpha(\category{C})) \simeq E(\category{C}) \oplus \Omega NE_\alpha(\category{C})$ from \cref{eq:splitting_E_Nil}, we arrive at the pushout square
    \begin{equation}\label{diag:splitting_diagram_k_theory_end}
    \begin{tikzcd}
        E(\category{C}) \oplus \Omega NE_\alpha(\category{C}) \oplus E(\category{C}) \ar[r, "{(\id, 0) \oplus 0 \oplus (0, \id)}"] \ar[d, "\id \oplus 0 \oplus \id"'] \ar[dr, phantom, very near end, "\ulcorner"]
        & E(\category{C}) \oplus E(\category{C}) \ar[d]
        \\
        E(\category{C}) \ar[r, "\freeend{\alpha^R}"]
        & E(\End_{\alpha^R}(\ind(\category{C}))^\omega).
    \end{tikzcd}
    \end{equation}
    The component $E(\category{C}) \oplus E(\category{C}) \to E(\category{C}) \oplus E(\category{C})$ of the upper horizontal map is the identity.
    To check this, note that by construction the composite $\Nil_\alpha(\category{C}) \xrightarrow{j} \category{C} \artimes (\category{C}\otimes\Delta^1) \to \category{C}\otimes\Delta^1$ is equivalent to $a_1 \fgt$ and the composite $\category{C} \xrightarrow{i_1} \category{C} \artimes (\category{C}\otimes\Delta^1) \to \category{C}\otimes\Delta^1$ is equialent to $a_0$.
    Furthermore, the cofibers of the upper and lower horizontal maps in the pushout square \cref{diag:splitting_diagram_k_theory_end} are equivalent, from which we obtain 
    \begin{equation*}
        \cofib(\freeend{\alpha^R} \colon E(\category{C}) \to E(\End_{\alpha^R}(\ind(\category{C}))^\omega) \simeq NE_\alpha(\category{C}).
    \end{equation*}
    As the final step in proving the claimed splitting, note that $\freeend{\alpha}$ has a retraction, sending a twisted endomorphism $f \colon x \to \alpha^R(x)$ to the fiber $\fib(f^L \colon \alpha(x) \to x)$ of the adjoint twisted endomorphism.
    To be precise, this only defines a functor $\fib \colon \End_{\alpha^R}(\ind(\category{C})) \to \ind(\category{C})$.
    Using the formula for $\freeend{\alpha^R}$ in \cref{prop:free_endomorphism}, we see that $\fib$ is a retraction of $\freeend{\alpha^R}$ on the large level.
    But $\fib$ sends the generators $\freeend{\alpha^R}(x)$ for $x \in \category{C}$ of $\End_{\alpha^R}(\ind(\category{C}))^\omega$ to compact objects and thus preserves all compact objects.
    
    An analogous argument, using the pushout 
    \begin{equation*}
    \begin{tikzcd}
        \category{C} \oplus \category{C} \ar[r, "{a_0 \oplus a_1}"] \ar[d, "{\id \oplus \alpha}"'] \ar[dr, phantom, very near end, "\ulcorner"]
        & \category{C}\otimes\Delta^1 \ar[d]
        \\
        \category{C} \ar[r, "\freeendbar{\alpha^R}"]
        & \Endbar_{\alpha^R}(\ind(\category{C}))^\omega
    \end{tikzcd}
    \end{equation*}
    instead of \cref{diag:pushout_twisted_endomorphisms}, proves 
    \begin{equation*}
        \cofib( \freeendbar{\alpha^R} \colon E(\category{C}) \to E(\Endbar_{\alpha^R}(\ind(\category{C}))^\omega) \simeq \NKbar{E}_\alpha(\category{C}).
    \end{equation*}
    To obtain the retraction of $\freeendbar{\alpha^R}$ in the final step, one has to note that the right adjoint $\alpha^R \colon \ind(\category{C}) \to \ind(\category{C})$ preserves colimits as its left adjoint $\ind(\alpha)$ preserves compact objects.
    Then one can apply \cref{prop:free_endomorphism} for an explicit formula for $\freeendbar{\alpha^R}$ to see that $\cofib \colon \Endbar_{\alpha^R}(\ind(\category{C}))^\omega \to \category{C}, (x, f \colon \alpha(x) \to x) \mapsto \cofib(f)$ is a retraction of $\freeendbar{\alpha^R}$.
\end{proof}

\section{Regularity}\label{sec:regularity}

In this section we want to generalise the classical vanishing result for $\Nil$-terms saying that $NK_\alpha(R) \simeq 0$ if $R$ is a regular ring and $\alpha \colon R \to R$ an automorphism, as shown for example in \cite[Theorem 4]{Waldhausen78}.
When passing from rings to (derived) categories of modules, the analogue of regularity is the notion of a $t$-structure.
Let us recall its definition.
\begin{recollect}[$t$-structure]\label{recollect:t_structure}
    A $t$-structure on a stable $\category{C}$ consists of two full subcategories $\category{C}_{\le 0}, \category{C}_{\ge 0} \subseteq \category{C}$ satisfying the following properties:
    \begin{enumerate}[label=(\arabic*)]
        \item for all $x \in \category{C}_{\ge 0}$ and $y \in \category{C}_{\le 0}$ one has $\Hom_{\category{C}}(x,\Omega y) \simeq 0$;
        \item $\category{C}_{\ge 0}$ is closed under $\Sigma$ and $\category{C}_{\le 0}$ is closed under $\Omega$;
        \item for any $x \in \category{C}$ there is a fiber sequence $x' \to x \to x''$ with $x' \in \category{C}_{\ge 0}$ and $x'' \in \Omega \category{C}_{\le 0}$.
    \end{enumerate}
    In this situation, the inclusion $\category{C}_{\le 0} \subseteq \category{C}$ has a left adjoint $\tau_{\le 0}$ and the inclusion $\category{C}_{\ge 0} \subseteq \category{C}$ has a right adjoint $\tau_{\ge 0}$.
    A $t$-structure is bounded if for any $x \in \category{C}$ there is $k \in \bbN$ with $\Sigma^k x \in \category{C}_{\ge 0}$ and  $\Omega^k x \in \category{C}_{\le 0}$.
    The heart of $\category{C}$ is defined by $\category{C}^\heartsuit = \category{C}_{\ge 0} \cap \category{C}_{\le 0}$.
    More details on $t$-structures on stable categories can be found in \cite[Section 1.2.1]{HA}.
    
    As an important class of examples of bounded $t$-structures, consider a regular coherent discrete ring $R$.
    This means that any finitely generated left $R$-module is finitely presented.
    In this situation, the perfect derived category $\module_R^\omega$ admits a bounded $t$-structure with (co)connective objects given by precisely those $R$-modules with homology in nonnegative (resp. nonpositive) degrees.
    For more details and examples of $t$-structures on module categories of ring spectra we refer the reader to \cite{BurklundLevy22}.
\end{recollect}

Let us now state the general vanishing result for $\Nil$-terms.
It can be easily deduced Burklund-Levy's abstract dévissage result \cite[Theorem 1.3]{BurklundLevy23}.
Land-Tamme show the analogous result for tensor algebras in the case where $\category{C}$ has a single generator in \cite[Corollary 4.13]{LandTamme23}.

\begin{cor}\label{thm:regularity_vanishing_nil}
    Consider an exact endofunctor $\alpha \colon \category{C} \to \category{C}$ of a perfect category and assume that $\category{C}$ admits a bounded $t$-structure.
    If $\alpha$ is left t-exact, meaning that $\alpha(\category{C}_{\le 0}) \subseteq \category{C}_{\le 0}$, then $\tau_{\ge 0} NK_\alpha(\category{C}) \simeq 0$.
    If the heart $\category{C}^\heartsuit$ is aditionally Noetherian, then $NK_\alpha(\category{C}) \simeq 0$.
    The analougous vanishing results hold for $\NKbar{K}_\alpha(\category{C})$ if $\alpha$ is right $t$-exact.
\end{cor}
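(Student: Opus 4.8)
The plan is to deduce this from the identification of the $\Nil$-terms with $K$-theory of categories of twisted endomorphisms, together with a dévissage argument — mirroring the classical deduction of $NK(R)\simeq 0$ for a regular ring $R$ from the homotopy invariance of $K$-theory of regular rings. Two such identifications are available: \cref{eq:splitting_E_Nil} presents $\Omega NE_\alpha(\category{C})$ as the cofiber of the split map $\triv\colon E(\category{C})\to E(\Nil_\alpha(\category{C}))$, and \cref{thm:splitting_ktheory_endomorphisms} presents $NE_\alpha(\category{C})$ itself, without a shift, as the cofiber of the split map $\freeend{\alpha^R}\colon E(\category{C})\to E(\End_{\alpha^R}(\ind(\category{C}))^\omega)$. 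In either case the task reduces to showing that the relevant structure functor becomes an equivalence on ($\tau_{\ge 0}$ of) nonconnective $K$-theory; it is the unshifted presentation that yields the sharp connective statement $\tau_{\ge 0}NK_\alpha(\category{C})\simeq 0$, while the shifted one only yields vanishing of $\tau_{\ge 1}NK_\alpha(\category{C})$.

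Next I would build the $t$-structure. On $\End_\alpha(\category{C})$ — and, more to the point, on the ind-completed model $\End_{\alpha^R}(\ind(\category{C}))^\omega$ from \cref{thm:splitting_ktheory_endomorphisms} — one declares a twisted endomorphism to be (co)connective exactly when its underlying object is. Left $t$-exactness of $\alpha$ together with exactness gives $\alpha(\category{C}_{\le n})\subseteq\category{C}_{\le n}$ for all $n$; this forces the equaliser description of mapping spectra, \cref{eq:mapping_anima_lax_equaliser}, to collapse to vanishing pieces, yielding the orthogonality axiom, and lets one lift the truncations of the underlying object along $\alpha$ using the universal property of $\tau_{\le n}$, yielding the truncation triangles. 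Boundedness is inherited from $\category{C}$. The heart should be the abelian category of twisted endomorphisms of $\category{C}^\heartsuit$ with respect to the left exact endofunctor $\pi_0\alpha$ induced on the heart, and one checks (via \cref{prop:characterisation_nilpotent_endomorphism}, or directly since $\tau_{\ge n}$ preserves fibers and $\triv$ commutes with truncations) that $\Nil_\alpha(\category{C})$ inherits a bounded $t$-structure whose heart is the full subcategory of \emph{nilpotent} twisted endomorphisms.

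The vanishing would then follow from two dévissage inputs. Burklund--Levy's abstract dévissage theorem \cite[Theorem 1.3]{BurklundLevy23}, applied along the inclusion of the heart, identifies the connective cover of the nonconnective $K$-theory of a category with a bounded $t$-structure with the connective $K$-theory of its heart, and — when the heart is Noetherian — identifies the entire nonconnective $K$-theory with the (automatically connective) $K$-theory of the heart. This reduces the desired $K$-theory equivalence to the corresponding equivalence at the level of hearts: that the $K$-theory of $\category{C}^\heartsuit$ agrees, via trivial endomorphisms, with the $K$-theory of the abelian category of nilpotent twisted endomorphisms of $\category{C}^\heartsuit$. The latter is Quillen's classical dévissage theorem: left exactness of $\pi_0\alpha$ guarantees that for a nilpotent twisted endomorphism $(M,f)$ the subobject $\ker(f)$ carries the trivial structure map, so iterating produces a finite filtration of $(M,f)$ with trivial subquotients, while $\triv(\category{C}^\heartsuit)$ is visibly closed under subobjects, quotient objects and finite direct sums — exactly the hypotheses of Quillen's theorem, which (importantly) does not require the extra closure under extensions that $\triv(\category{C}^\heartsuit)$ in fact fails. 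Assembling these two steps and matching them with the splitting maps should give the $K$-theory equivalence, hence the corollary. The case of $\NKbar{K}_\alpha(\category{C})$ is the mirror image: right $t$-exactness of $\alpha$, the category $\Endbar$, and cokernels of structure maps in place of kernels.

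\textbf{Main obstacle.} I expect the work to sit in the first two steps. First, one must set up the $t$-structure on precisely the model of the twisted endomorphism category appearing in \cref{thm:splitting_ktheory_endomorphisms} — built from the right adjoint $\alpha^R$ on $\ind(\category{C})$ — and verify it is bounded with exactly the expected heart; this requires care about which functors remain $t$-exact after ind-completion. Second, one must match the two dévissage equivalences with the splitting maps so that the cokernel being killed is genuinely $\tau_{\ge 0}NE_\alpha(\category{C})$ and not merely $\tau_{\ge 1}NE_\alpha(\category{C})$, i.e. keep track of the connectivity shift in the definition of $NE_\alpha(\category{C})$. A minor point is that, in contrast to the untwisted case, $\triv(\category{C}^\heartsuit)$ is not a Serre subcategory of the heart, so one is forced into the more flexible version of Quillen dévissage.
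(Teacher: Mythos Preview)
Your proposal has a genuine gap, and it stems from the very detour you take to avoid the connectivity shift. The pointwise $t$-structure you want to put on $\End_{\alpha^R}(\ind(\category{C}))^\omega$ is \emph{not bounded}: the generators of this category are the free twisted endomorphisms $\freeend{\alpha^R}(y)$, whose underlying object is the infinite sum $\bigoplus_{n\ge 0}\alpha^n(y)$ in $\ind(\category{C})$, not an object of $\category{C}$. Even in the untwisted case $\alpha=\id$ over a regular ring $R$, this is the module $R[t]$, which is not coconnective in the pointwise sense; and for $\alpha$ merely left $t$-exact (say $\alpha=\Omega$) the sum is not even bounded below. So neither the Theorem of the Heart nor Quillen dévissage can be invoked on this model, and the whole argument collapses.

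The paper instead applies Burklund--Levy's abstract dévissage \cite[Theorem~1.3]{BurklundLevy23} directly to $\triv\colon\category{C}\to\Nil_\alpha(\category{C})$: the image generates by definition, and full faithfulness on $\category{C}^\heartsuit$ is a two-line mapping-space computation using left $t$-exactness. Your worry about the shift is legitimate at first glance---a connective $K$-equivalence along $\triv$ only kills $\tau_{\ge 1}NK_\alpha$---but the proof of \cite[Theorem~1.3]{BurklundLevy23} produces a bounded $t$-structure on the target $\Nil_\alpha(\category{C})$ (the paper remarks on this immediately after the corollary), whence $K_{-1}(\Nil_\alpha(\category{C}))=0$ and the remaining summand $\pi_0 NK_\alpha=\pi_{-1}(\Omega NK_\alpha)\subseteq K_{-1}(\Nil_\alpha(\category{C}))$ vanishes as well. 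Your multi-step plan (build a $t$-structure, pass to hearts, run Quillen dévissage) would in fact work if redirected to $\Nil_\alpha(\category{C})$---that is essentially what \cref{prop:t_structure_endomorphisms} and the subsequent corollary set up---but then you are just reproving the Burklund--Levy theorem rather than citing it.
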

\begin{proof}
    We want to apply \cite[Theorem 1.3]{BurklundLevy23} to the functor $\triv \colon \category{C} \to \Nil_\alpha(\category{C})$.
    By definition, the image of $\triv$ generates $\Nil_\alpha(\category{C})$ under finite colimits and retracts.
    It remains to check that the restriction of $\triv$ to $\category{C}^\heartsuit$ is fully faithful.
    For $x,y \in \category{C}^\heartsuit$ one has $\Hom_{\End_\alpha(\category{C})}((x,0), (y,0)) \simeq \Hom_{\category{C}}(x,y) \times \Omega \Hom_{\category{C}}(x, \alpha(y)) \simeq \Hom_{\category{C}}(x,y)$, where the last step we uses that $\alpha$ is left $t$-exact.
\end{proof}

The $t$-structure constructed on $\Nil_\alpha(\category{C})$ in the proof of \cite[Theorem 1.3]{BurklundLevy23} is very inexplicit.
We can give a more direct construction of a $t$-structure on $\End_{\alpha}(\category{C})$ and $\Nil_{\alpha}(\category{C})$ which might be of independent interest.

\begin{prop}\label{prop:t_structure_endomorphisms}
    Suppose that $\category{C} \in \catperf$ admits a $t$-structure and that $\alpha \colon \category{C} \to \category{C}$ is left t-exact.
    Then the full subcategories $\End_\alpha(\category{C})_{\ge 0}$ (resp. $\End_\alpha(\category{C})_{\le 0}$) on those objects $(x,f)$ with $x \in \category{C}_{\ge 0}$ (resp. $x \in \category{C}_{\le 0}$) define a $t$-structure on $\End_\alpha(\category{C})$, called the \textit{pointwise $t$-structure}.
\end{prop}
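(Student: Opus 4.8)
The plan is to check the three axioms of \cref{recollect:t_structure} directly, using the description of mapping spaces in the lax equaliser $\End_\alpha(\category{C}) = \laxeq(\id,\alpha)$ from \eqref{eq:mapping_anima_lax_equaliser}, which here reads
\[
    \Hom_{\End_\alpha(\category{C})}((x,f),(y,g)) \simeq \equaliser\bigl(g_*,\ f^*\alpha \colon \Hom_{\category{C}}(x,y) \to \Hom_{\category{C}}(x,\alpha(y))\bigr),
\]
together with the facts that $\End_\alpha(\category{C})$ is stable, that $\fgt \colon \End_\alpha(\category{C}) \to \category{C}$ is exact and conservative, and that $\alpha$ commutes with $\Omega$ and $\Sigma$; the hypothesis is used only through the remark that left $t$-exactness $\alpha(\category{C}_{\le 0})\subseteq\category{C}_{\le 0}$ forces $\alpha(\category{C}_{\le n})\subseteq\category{C}_{\le n}$ for all $n$, since $\category{C}_{\le n-1}=\Omega\category{C}_{\le n}$ and $\alpha$ is exact. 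Axiom (2) is immediate, as $\fgt$ is exact with $\fgt\Sigma(x,f)=\Sigma x$ and $\fgt\Omega(x,f)=\Omega x$. For axiom (1), if $x\in\category{C}_{\ge 0}$ and $y\in\category{C}_{\le 0}$ then $\fgt\Omega(y,g)=\Omega y\in\Omega\category{C}_{\le 0}$, and the formula above presents $\Hom_{\End_\alpha(\category{C})}((x,f),\Omega(y,g))$ as an equaliser of two maps $\Hom_{\category{C}}(x,\Omega y)\to\Hom_{\category{C}}(x,\alpha(\Omega y))$; the source vanishes by axiom (1) for $\category{C}$, and the target vanishes because $\alpha(\Omega y)=\Omega(\alpha y)$ with $\alpha y\in\category{C}_{\le 0}$ by left $t$-exactness, so the equaliser is contractible.

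The real content is axiom (3); this is the step I expect to be the main obstacle, since the truncation triangle of the underlying object must be upgraded to a fiber sequence inside $\End_\alpha(\category{C})$. Given $(x,f)$, start from the truncation fiber sequence $\tau_{\ge 0}x\xrightarrow{u}x\xrightarrow{v}\tau_{\le -1}x$ in $\category{C}$ and apply $\alpha$ to get a fiber sequence $\alpha\tau_{\ge 0}x\xrightarrow{\alpha u}\alpha x\xrightarrow{\alpha v}\alpha\tau_{\le -1}x$. The crucial vanishing is $\Hom_{\category{C}}(\tau_{\ge 0}x,\alpha\tau_{\le -1}x)\simeq 0$, which holds by axiom (1) for $\category{C}$ because $\tau_{\ge 0}x\in\category{C}_{\ge 0}$ and $\alpha\tau_{\le -1}x\in\category{C}_{\le -1}$ by left $t$-exactness. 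Mapping $\tau_{\ge 0}x$ into the $\alpha$-image triangle, this vanishing makes $(\alpha u)_*\colon\Hom_{\category{C}}(\tau_{\ge 0}x,\alpha\tau_{\ge 0}x)\to\Hom_{\category{C}}(\tau_{\ge 0}x,\alpha x)$ an equivalence; I take $f'$ to be the preimage of $f\circ u$, so the resulting homotopy $\alpha u\circ f'\simeq f\circ u$ is exactly what makes $u$ a morphism $(\tau_{\ge 0}x,f')\to(x,f)$ in $\End_\alpha(\category{C})$. Dually, mapping the original triangle into $\alpha\tau_{\le -1}x$ and using the same vanishing shows $v^*\colon\Hom_{\category{C}}(\tau_{\le -1}x,\alpha\tau_{\le -1}x)\to\Hom_{\category{C}}(x,\alpha\tau_{\le -1}x)$ is an equivalence; I take $f''$ to be the preimage of $\alpha v\circ f$, yielding a morphism $v\colon(x,f)\to(\tau_{\le -1}x,f'')$ in $\End_\alpha(\category{C})$.

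It then remains to see that $(\tau_{\ge 0}x,f')\xrightarrow{u}(x,f)\xrightarrow{v}(\tau_{\le -1}x,f'')$ is a fiber sequence in $\End_\alpha(\category{C})$ with outer terms in $\End_\alpha(\category{C})_{\ge 0}$ and $\Omega\End_\alpha(\category{C})_{\le 0}$. The membership statements are clear from $\tau_{\ge 0}x\in\category{C}_{\ge 0}$ and $\tau_{\le -1}x\in\Omega\category{C}_{\le 0}$. For the fiber sequence, the mapping-space formula presents $\Hom_{\End_\alpha(\category{C})}((\tau_{\ge 0}x,f'),(\tau_{\le -1}x,f''))$ as an equaliser of maps between $\Hom_{\category{C}}(\tau_{\ge 0}x,\tau_{\le -1}x)$ and $\Hom_{\category{C}}(\tau_{\ge 0}x,\alpha\tau_{\le -1}x)$, both contractible (the first by axiom (1) for $\category{C}$, the second as above); hence this mapping space is contractible, so the composite $v\circ u$ admits an essentially unique nullhomotopy and we obtain a candidate fiber sequence in $\End_\alpha(\category{C})$. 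Since $\fgt$ is exact and conservative and sends this candidate to the genuine fiber sequence $\tau_{\ge 0}x\to x\to\tau_{\le -1}x$ in $\category{C}$, it is itself a fiber sequence, which establishes axiom (3). Finally, the contractibility of the lift spaces appearing above also makes $f'$ and $f''$ functorial in $(x,f)$, so the truncation functors of this $t$-structure are computed pointwise — though this refinement is not needed for the statement.
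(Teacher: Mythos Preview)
Your proof is correct and follows the same overall strategy as the paper's: verify the three $t$-structure axioms directly, with axioms (1) and (2) handled exactly as the paper does via the lax-equaliser mapping-space formula and exactness of $\fgt$.

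For axiom (3) the two arguments diverge slightly in execution. You lift the twist to both $\tau_{\ge 0}x$ and $\tau_{\le -1}x$ separately using the vanishing $\Hom_{\category{C}}(\tau_{\ge 0}x,\alpha\tau_{\le -1}x)\simeq 0$, then produce the nullhomotopy from contractibility of the relevant mapping space and conclude by conservativity of $\fgt$. The paper instead constructs only the map to the coconnective part, and does so via the Beck--Chevalley transformation $\beta\colon\tau_{\le 0}\alpha\to\alpha\tau_{\le 0}$ associated to left $t$-exactness of $\alpha$: the map $(x,f)\to(\tau_{\le 0}x,\beta\circ\tau_{\le 0}f)$ lies over $\eta_x\colon x\to\tau_{\le 0}x$, and its fiber has underlying object $\tau_{\ge 1}x$, finishing the argument in one stroke. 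Your obstruction-theoretic lift and the paper's $\beta\circ\tau_{\le 0}f$ are of course the same map, but the Beck--Chevalley packaging yields the explicit formula $\tau_{\le 0}(x,f)=(\tau_{\le 0}x,\beta\circ\tau_{\le 0}f)$ directly, which the paper then exploits in the subsequent corollary to show that $\Nil_\alpha(\category{C})$ is closed under truncation. Your closing remark about functoriality recovers this, so nothing is lost; the paper's route is just a bit shorter and more formulaic.
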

\begin{proof}
    We have to verify the properties from \cref{recollect:t_structure}.
    Recall that for objects $(x,f), (y,g) \in \End_\alpha(\category{C})$ we have
    \begin{equation*}
        \Hom_{\End_\alpha(\category{C})}((x,f), (y,g)) 
        \simeq \equaliser (f^* \circ \alpha,g_* \colon \Hom_{\category{C}}(x,y) \to \Hom_{\category{C}}(x,\alpha(y))).
    \end{equation*}
    Now if $x \in \category{C}_{\ge 0}$ and $y \in \category{C}_{\le -1}$, then $\alpha(y) \in \category{C}_{\le -1}$ showing that $\Hom_{\category{C}}(x,y) \simeq \Hom_{\category{C}}(x,\alpha(y)) \simeq 0$.
    This also implies $\Hom_{\End_\alpha(\category{C})}((x,f), (y,g)) \simeq 0$ and verifies condition (1).
    As $\Sigma(x,f) \simeq (\Sigma x, \Sigma f)$ it follows that $\End_\alpha(\category{C})_{\ge 0}$ is closed under $\Sigma$ and $\End_\alpha(\category{C})_{\le 0}$ is closed under $\Omega$ which shows (2).
    
    Finally, for (3) we have to show that every object $(x,f) \in \End_\alpha(\category{C})$ sits in a fiber sequence with a 1-connective and 0-coconnective object.
    As $\alpha$ is only left $t$-exact, it generally does not commute with the truncation $\tau_{\le 0}$.
    However, there is always a Beck-Chevalley transformation $\beta \colon \tau_{\le 0} \alpha \to \alpha \tau_{\le 0}$ associated to the commutative square
    \begin{equation*}
    \begin{tikzcd}
        \category{C}_{\le 0} \ar[r, hook] \ar[d, "\alpha"]
        & \category{C} \ar[d, "\alpha"]
        \\
        \category{C}_{\le 0} \ar[r, hook]
        & \category{C},
    \end{tikzcd}
    \end{equation*}
    where the horizontal left adjoints are precisely the truncations $\tau_{\le 0}$.
    Denoting by $\eta \colon \id \to \tau_{\le 0}$ the adjunction unit, we obtain the commutative diagram
    \begin{equation}\label{diag:t_structure_end}
    \begin{tikzcd}
        x \ar[d, "f"] \ar[r, "\eta_x"]
        & \tau_{\le 0} x \ar[d, "\tau_{\le 0} f"] 
        \ar[dr, bend left]
        \\
        \alpha(x) \ar[r, "\eta_{\alpha(x)}"]
        & \tau_{\le 0} \alpha(x) \ar[r, "\beta"]
        & \alpha(\tau_{\le 0} x).
    \end{tikzcd}
    \end{equation}
    It follows from \cite[Lemma 2.2.4(3)]{CSY22} that $\beta \circ \eta_{\alpha(x)} \simeq \alpha \eta_x$.
    The outer quadrilateral of \cref{diag:t_structure_end} defines a map $(x,f) \to (\tau_{\le 0} x, \beta \circ \tau_{\le 0} f)$ in $\End_\alpha(\category{C})$ with underlying map $\eta_x \colon x \to \tau_{\le 0}x$.
    The underlying object of the fiber of $(x,f) \to (\tau_{\le 0} x, \beta \circ \tau_{\le 0} f)$ is $\tau_{\ge 1} x \simeq \fib(x \to \tau_{\le 0} x)$ and thus 1-connective.
\end{proof}

Let us now show that the pointwise $t$-structure on $\End_\alpha(\category{C})$ restricts to a $t$-structure on $\Nil_\alpha(\category{C})$ in many situations.
Note that the condition of the following result is in particular satisfied if $\alpha$ is a left $t$-exact automorphism of $\category{C}$.

\begin{cor}
    Suppose that $\category{C} \in \catperf$ admits a bounded $t$-structure and that $\alpha \colon \category{C} \to \category{C}$ is left t-exact and admits a left adjoint.
    Then the pointwise $t$-structure on $\End_\alpha(\category{C})$ restricts to a $t$-structure on $\Nil_\alpha(\category{C})$.
\end{cor}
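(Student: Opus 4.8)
The plan is to show that $\Nil_\alpha(\category{C})$ is stable under the truncation functors of the pointwise $t$-structure on $\End_\alpha(\category{C})$ from \cref{prop:t_structure_endomorphisms}. Since $\Nil_\alpha(\category{C})$ is a full perfect, hence thick, subcategory of $\End_\alpha(\category{C})$, it will be enough to establish stability under the connective cover $\tau_{\ge 1}$: stability under $\tau_{\le 0}$ then follows by taking cofibers, and the cofiber sequence $\tau_{\ge 1}(x,f) \to (x,f) \to \tau_{\le 0}(x,f)$ verifies axiom (3) of \cref{recollect:t_structure} for the subcategories $\Nil_\alpha(\category{C}) \cap \End_\alpha(\category{C})_{\ge 0}$ and $\Nil_\alpha(\category{C}) \cap \End_\alpha(\category{C})_{\le 0}$, axioms (1) and (2) being inherited automatically.

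To prove this, I would fix $(x,f) \in \Nil_\alpha(\category{C})$. Since $\alpha$ admits a left adjoint, \cref{prop:characterisation_nilpotent_endomorphism} supplies some $n$ with $\twistedpower{f}{n} \simeq 0$, and it also reduces the goal to showing $\twistedpower{f'}{n} \simeq 0$, where $\tau_{\ge 1}(x,f) = (\tau_{\ge 1}x, f')$. Recall from the proof of \cref{prop:t_structure_endomorphisms} that $\tau_{\ge 1}(x,f)$ does have underlying object $\tau_{\ge 1}x$ and that the canonical map $\iota \colon \tau_{\ge 1}(x,f) \to (x,f)$ is a morphism of twisted endomorphisms whose underlying map is the truncation map $\iota \colon \tau_{\ge 1}x \to x$.

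The key step is then a connectivity estimate. Because $\iota$ is a morphism of twisted endomorphisms one has $f \circ \iota \simeq \alpha(\iota) \circ f'$, and iterating gives $\twistedpower{f}{n} \circ \iota \simeq \alpha^n(\iota) \circ \twistedpower{f'}{n}$, which is nullhomotopic since $\twistedpower{f}{n} \simeq 0$. Applying the exact functor $\alpha^n$ to the cofiber sequence $\tau_{\ge 1}x \xrightarrow{\iota} x \to \tau_{\le 0}x$ identifies the fiber of $\alpha^n(\iota) \colon \alpha^n \tau_{\ge 1}x \to \alpha^n x$ with $\Omega \alpha^n \tau_{\le 0}x$. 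Left $t$-exactness of $\alpha$ gives $\alpha^n \tau_{\le 0}x \in \category{C}_{\le 0}$, so axiom (1) of the $t$-structure yields $\Hom_\category{C}(\tau_{\ge 1}x, \Omega \alpha^n \tau_{\le 0}x) \simeq 0$ (using $\tau_{\ge 1}x \in \category{C}_{\ge 1} \subseteq \category{C}_{\ge 0}$). Hence post-composition with $\alpha^n(\iota)$ is injective on $\pi_0 \Hom_\category{C}(\tau_{\ge 1}x, -)$, and $\alpha^n(\iota) \circ \twistedpower{f'}{n} \simeq 0$ forces $\twistedpower{f'}{n} \simeq 0$, as needed.

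I expect the only genuine subtlety to be the bookkeeping around the pointwise truncation: one must be sure that $\tau_{\ge 1}(x,f)$ has underlying object $\tau_{\ge 1}x$ and that $\iota$ encodes the evident commuting square, so that the compatibility $\twistedpower{f}{n} \circ \iota \simeq \alpha^n(\iota) \circ \twistedpower{f'}{n}$ holds; both facts are already contained in the construction in the proof of \cref{prop:t_structure_endomorphisms}. Boundedness of the $t$-structure is not needed for the existence of the restricted $t$-structure, only to guarantee that the restriction to $\Nil_\alpha(\category{C})$ is again bounded.
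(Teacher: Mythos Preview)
Your argument is correct and takes a genuinely different route from the paper's. Both proofs reduce to showing that $\Nil_\alpha(\category{C})$ is closed under truncation and both invoke characterisation (2) of \cref{prop:characterisation_nilpotent_endomorphism} (hence the left-adjoint hypothesis), but the execution differs. The paper works with $\tau_{\le 0}$ and uses the explicit formula $\tau_{\le 0}(x,f) = (\tau_{\le 0}x,\ \beta \circ \tau_{\le 0}f)$ from the proof of \cref{prop:t_structure_endomorphisms}; by naturality of the Beck--Chevalley map $\beta$ it identifies $\twistedpower{(\beta \circ \tau_{\le 0}f)}{n}$ with $\twistedpower{\beta}{n} \circ \tau_{\le 0}\twistedpower{f}{n}$, which visibly vanishes once $\twistedpower{f}{n}$ does. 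You instead work with $\tau_{\ge 1}$ and exploit only that the canonical map $\iota\colon \tau_{\ge 1}(x,f) \to (x,f)$ is a morphism of twisted endomorphisms, together with the $t$-structure orthogonality $\pi_0\Hom_{\category{C}}(\tau_{\ge 1}x,\ \Omega\alpha^n\tau_{\le 0}x) = 0$ to deduce injectivity of $\alpha^n(\iota)_*$ on $\pi_0$. Your approach is a bit more conceptual in that it never unpacks the Beck--Chevalley description of the truncated endomorphism; the paper's approach is more explicit and makes the nilpotence of the truncation completely transparent. Your remark that boundedness is irrelevant for closure under truncation is also correct; the paper's argument does not use it either.
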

\begin{proof}
    We only have to show that $\Nil_\alpha(\category{C})$ is closed under truncation.
    Recall from \cref{prop:characterisation_nilpotent_endomorphism} that $\Nil_\alpha(\category{C})$ is the full subcategory of $\End_\alpha(\category{C})$ consisting of those objects $(x,f)$ for which $\twistedpower{f}{n} \simeq 0$ for large $n$.
    In the proof of \cref{prop:t_structure_endomorphisms} we saw the explicit formula for truncation $\tau_{\le 0}(x,f) = (\tau_{\le 0} x, \beta \circ \tau_{\le 0} f)$, where $\beta \colon \tau_{\le 0} \alpha \to \alpha \tau_{\le 0}$ is the Beck-Chevalley transformation.
    Now note that by naturality of $\beta$ the composite
    \begin{equation*}
        \tau_{\le 0} x 
        \xrightarrow{\tau_{\le 0} f} \tau_{\le 0} \alpha(x) 
        \xrightarrow{\beta} \alpha \tau_{\le 0} x 
        \xrightarrow{\alpha \tau_{\le 0} f} \dots 
        \xrightarrow{\alpha^{n-1} \tau_{\le 0} f} \alpha^{n-1} \tau_{\le 0} \alpha (x) 
        \xrightarrow{\alpha^{n-1} \beta} \alpha^{n} \tau_{\le 0} x
    \end{equation*}
    defining $\twistedpower{(\beta \circ \tau_{\le 0} f)}{n}$ is equivalent to the composite
    \begin{equation*}
    \tau_{\le 0} x 
    \xrightarrow{\tau_{\le 0} f} \tau_{\le 0} \alpha(x) 
    \xrightarrow{\tau_{\le 0} \alpha(f)} \dots 
    \xrightarrow{\alpha^{n-2} \beta} \alpha^{n-1} \tau_{\le 0} \alpha (x) 
    \xrightarrow{\alpha^{n-1} \beta} \alpha^{n} \tau_{\le 0} x
    \end{equation*}
    given by $\twistedpower{\beta}{n} \circ \tau_{\le 0} \twistedpower{f}{n}$ which vanishes for large $n$.
    This shows $\tau_{\le 0}(x,f) \in \Nil_\alpha(\category{C})$.
\end{proof}

\section{Applications and Examples}\label{sec:applications}

\subsection{\texorpdfstring{$K$-theory of tensor algebras}{K-theory of tensor algebras}}

In this section we will study various applications of \cref{thm:twisted_BHS} to obtain splittings for the $K$-theory of certain rings.
Let us begin with a very general consideration.
\begin{constr}[Tensor algebras]\label{cons:tensor_algebras}
    For a ring spectrum $R \in \alg(\spectra)$ denote by $\module_R$ the category of left $R$-modules (in spectra).
    Consider an endomorphism $\alpha \colon \module_R^\omega \to \module_R^\omega$.
    Note that, equivalently, $\alpha \simeq M \otimes_R -$ for a $(R,R)$-bimodule $M$ which is compact as a left $R$-module by \cite[Remark 4.8.4.9]{HA}.
    Denote by $\alpha^R \colon \module_R \to \module_R$ the $\ind$-right adjoint to $\alpha$.
    As explained in \cref{lem:twisted_endomorphisms_compactly_generated}, the categories $\End_{\alpha^R}(\module_R)$ and $\aut_{\alpha^R}(\module_R)$ are compactly generated by the elements $\freeend{\alpha^R}(R)$ and $\loc{\alpha^R}\freeend{\alpha^R}(R)$, respectively.
    Lurie's version of the Schwede-Shipley theorem \cite[Theorem 7.1.2.1, Remark 7.1.2.3]{HA} shows that $\End_{\alpha^R}(\module_R) \simeq \module_{\tensoralg{R}{M}}(\spectra)$ and $\aut_{\alpha^R}(\module_R) \simeq \module_{\tensoralgaut{R}{M}}(\spectra)$, where
    \begin{align*}
        &\tensoralg{R}{M} \coloneqq \Endspec_{\End_{\alpha^R}(\module_R)}(\freeend{\alpha^R}(R)) 
        \quad \text{and} \quad \\
        &\tensoralgaut{R}{M} \coloneqq \Endspec_{\aut_{\alpha^R}(\module_R)}(\loc{\alpha}\freeend{\alpha^R}(R))
    \end{align*}
    are the endomorphism ring spectra.
    We call them the \textit{tensor algebra} and \textit{localised tensor algebra}.
    Using the description of free endomorpisms in \cref{prop:free_endomorphism}, we can caluclate the underlying spectrum of the tensor algebra by
    \begin{equation*}
        \Endspec_{\End_{\alpha^R}(\module_R)}(\freeend{\alpha^R}(R)) \simeq \hom_{\module_R}(R, \fgt \freeend{\alpha^R}(R)) \simeq \bigoplus_{n \ge 0} M^{\otimes_R^n},
    \end{equation*}
    where the first equivalence is given by restriction along the unit $R \to \fgt \freeend{\alpha^R}(R)$.
    The inverse sends a map $f \colon R \to \bigoplus_{n \ge 0} M^{\otimes_R^n}$ to the map
    \begin{equation*}
        \bigoplus_{k \ge 0} M^{\otimes_R^k} \otimes_R f \colon \bigoplus_{k \ge 0} M^{\otimes_R^k} \to \bigoplus_{l \ge 0} M^{\otimes_R^l}.
    \end{equation*}
    We thus see that multiplication on $\tensoralg{R}{M}$ is componentwise given by the map
    \begin{equation*}
        M^{\otimes_R^n} \times M^{\otimes_R^m} \to M^{\otimes_R^n} \otimes_R M^{\otimes_R^m} \simeq M^{\otimes_R^{n+m}}.
    \end{equation*}
    Let us now turn to the localised tensor algebra.
    \cref{constr:localisation} gives us the formula
    \begin{align*}
        &\tensoralgaut{R}{M} 
        \simeq \loc{\alpha}(\tensoralg{R}{M}) \\
        &\simeq \colim\left(\bigoplus_{k \ge 0} M^{\otimes_R^k} \xrightarrow{\coev} \bigoplus_{k \ge 0} M^\vee \otimes_R M^{\otimes_R^k} \xrightarrow{\coev} \bigoplus_{k \ge 0} (M^\vee)^{\otimes_R^2} \otimes_R M^{\otimes_R^k} \xrightarrow{\coev} \dots \right),
    \end{align*}
    where $M^\vee$ denotes the right dual $(R,R)$-bimodule to $M$ (such that $M \otimes_R -$ is left adjoint to $M^\vee \otimes_R -$), which exists as $M$ is compact as a left $R$-module.
    The maps in the colimit are induced by the coevaluation $\coev \colon R \to M^\vee \otimes_R M$.
\end{constr}
\begin{example}
    Let $R \in \alg(\spectra)$ be a ring spectrum together with an endomorphism $\alpha \colon R \to R$.
    Induction induces an endomorphism $\alpha_! \colon \module_R^\omega \to \module_R^\omega$.
    In that case, the bimodule $M$ from \cref{cons:tensor_algebras} is given by $M = R$ with trivial left $R$-module structre and right $R$-module structure given by $\alpha$.
    We write $R_\alpha[t] \coloneqq \tensoralg{R}{M}$ and obtain as a left $R$-module
    \begin{equation*}
        R_\alpha[t]  \simeq \fgt \freeend{\alpha^R}(R) \simeq \bigoplus_{n \ge 0} \alpha_!^n R \simeq \bigoplus_{n \ge 0} R.
    \end{equation*}
    The multiplication $\alpha_!^n R \times \alpha_!^m R \to \alpha_!^{n+m} R$ identifies with $R \times R \xrightarrow{\id \otimes \alpha^n} R \times R \xrightarrow{\otimes} R$.
    In particular, if the ring $R$ is discrete, this recovers the classical twisted polynomial ring.
    
    If $\alpha$ is an automorphism, the module $M^\vee$ is given by the $(R, R)$-bimodule $R$ with trivial right $R$-module structure and left $R$-module structure given by $\alpha^{-1}$.
    In that case, we write $R_\alpha[t^{\pm 1}] = \tensoralgaut{R}{M}$ and can identify
    \begin{equation*}
        R_\alpha[t^{\pm 1}]
        \simeq \colim\left(\bigoplus_{k \ge 0} R \xrightarrow{\shift} \bigoplus_{k \ge 0} R \xrightarrow{\shift} \dots \right) 
        \simeq \bigoplus_{n \in \bbZ} R.
    \end{equation*}
    Multiplication is again given on component $n \times m$ by $R \times R \xrightarrow{\id \otimes \alpha^n} R \times R \xrightarrow{\otimes} R$.
    If $R$ is discrete, this recovers the classical ring of twisted Laurent polynomials.
\end{example}
\cref{thm:twisted_BHS} immediately specialises to the following result.

\begin{cor}
    Let $R \in \alg(\spectra)$ and $M$ a $(R, R)$-bimodule which is compact as a left $R$-module.
    Then there is a splitting
    \begin{equation*}
        E(\tensoralgaut{R}{M}) \simeq E(R)_{h\bbN} \oplus NE_M(R) \oplus \NKbar{E}_{M}(R).
    \end{equation*}
    If $M$ is induced by an automorphism $\alpha$ of $R$, this reduces to a splitting
    \begin{equation*}
        E(R_\alpha[t^{\pm 1}]) \simeq E(R)_{h\bbN} \oplus NE_\alpha(R) \oplus \NKbar{E}_\alpha(R).
    \end{equation*}
\end{cor}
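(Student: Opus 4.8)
The plan is to deduce this from \cref{thm:twisted_BHS} by specialising to the perfect category $\category{C} = \module_R^\omega$ equipped with the exact endofunctor $\alpha = M \otimes_R - \colon \module_R^\omega \to \module_R^\omega$, which is well defined and exact precisely because $M$ is compact as a left $R$-module. Granting this specialisation, the only real work is to identify the categorical mapping torus $(\module_R^\omega)_{h\bbN}$ with $\module_{\tensoralgaut{R}{M}}^\omega$, the perfect modules over the localised tensor algebra; once that identification is in place, \cref{thm:twisted_BHS} yields the claimed splitting essentially verbatim.

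First I would produce that identification. By \cref{lem:identification_mapping_torus_twisted_automorphisms} there is an equivalence $(\module_R^\omega)_{h\bbN} \simeq (\aut_{\alpha^R}(\module_R))^\omega$, where $\alpha^R$ is the right adjoint of $\ind(\alpha) = M \otimes_R - \colon \module_R \to \module_R$. As recorded in \cref{cons:tensor_algebras}, the presentable category $\aut_{\alpha^R}(\module_R)$ is compactly generated by the single object $\loc{\alpha^R}\freeend{\alpha^R}(R)$: this invokes \cref{lem:twisted_endomorphisms_compactly_generated}, whose hypothesis that $\alpha^R$ preserve colimits holds because its left adjoint $\ind(\alpha)$ preserves compact objects. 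Lurie's form of the Schwede--Shipley theorem then gives $\aut_{\alpha^R}(\module_R) \simeq \module_{\tensoralgaut{R}{M}}(\spectra)$, with $\tensoralgaut{R}{M}$ arising as the relevant endomorphism ring spectrum, and passing to compact objects yields $(\module_R^\omega)_{h\bbN} \simeq \module_{\tensoralgaut{R}{M}}^\omega$.

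I would then feed this into \cref{thm:twisted_BHS} and unwind the standing conventions $E(\tensoralgaut{R}{M}) \coloneqq E(\module_{\tensoralgaut{R}{M}}^\omega)$, $E(R) \coloneqq E(\module_R^\omega)$, $NE_M(R) \coloneqq NE_\alpha(\module_R^\omega)$ and $\NKbar{E}_M(R) \coloneqq \NKbar{E}_\alpha(\module_R^\omega)$, obtaining the first splitting. For the second statement, when $M$ is the $(R,R)$-bimodule induced by a ring automorphism $\alpha \colon R \to R$, so that $M \otimes_R - \simeq \alpha_!$ on $\module_R^\omega$, the discussion preceding this corollary identifies $\tensoralgaut{R}{M} \simeq R_\alpha[t^{\pm 1}]$, and by definition $NE_M(R) = NE_\alpha(R)$ and $\NKbar{E}_M(R) = \NKbar{E}_\alpha(R)$; substituting gives the second splitting. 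I do not expect a genuinely hard step here, since the mathematical content sits entirely in \cref{thm:twisted_BHS}; the one point worth care is the colimit-preservation of $\alpha^R$, which is what makes $\aut_{\alpha^R}(\module_R)$ a module category over a single ring spectrum and hence lets Schwede--Shipley apply, but this has already been checked in \cref{cons:tensor_algebras} and reused in the proof of \cref{thm:splitting_ktheory_endomorphisms}.
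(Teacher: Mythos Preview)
Your proposal is correct and follows exactly the approach the paper indicates: the paper states only that ``\cref{thm:twisted_BHS} immediately specialises to the following result,'' and the identification $(\module_R^\omega)_{h\bbN} \simeq \module_{\tensoralgaut{R}{M}}^\omega$ you spell out is precisely the content already established in \cref{cons:tensor_algebras} (via \cref{lem:identification_mapping_torus_twisted_automorphisms} and Schwede--Shipley). You have simply made explicit the routine unwinding that the paper leaves to the reader.
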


\begin{example}[Generators in nonzero degree]
    Consider the action $\alpha = \Sigma^k \colon \category{C} \xrightarrow{\simeq} \category{C}$ through suspension on a perfect category $\category{C}$ for some $k \in \bbZ$.
    Note that $E(\Sigma^k) \simeq \id_{E(\category{C})}$ if $k$ is even and $E(\Sigma^k) \simeq -\id_{E(\category{C})}$ if $k$ is odd.
    From this we obtain
    \begin{equation*}
        E(\category{C})_{h \bbZ} \simeq 
        \begin{cases}
        E(\category{C}) \oplus \Sigma E(\category{C}), & $k$ \text{ even}; \\
        E(\category{C})/2, & $k$ \text{ odd}
        \end{cases}
    \end{equation*}
    for the middle term in the splitting in \cref{thm:twisted_BHS}.
    If $\category{C} = \module_R^\omega$ for some $R \in \alg(\spectra)$, then $(\module_R^\omega)_{h \bbN} \simeq \module_{R[x^{\pm 1}]}^\omega$, where $R[x^{\pm 1}]$ is the free associative algebra over $R$ with an invertible generator in degree $\lvert x \rvert = k$.

    If $\category{C}$ admits a bounded $t$-structure and $k \le 0$, then $\Sigma^{k}$ is left $t$-exact.
    \cref{thm:regularity_vanishing_nil} shows that in this case $\tau_{\ge 0}NK_{\Sigma^{k}}(\category{C}) \simeq 0$.
    It turns out that the other $\Nil$-term $\NKbar{K}_{\Sigma^{k}}(\category{C})$ is generally not trivial, not even rationally.
    As an example, consider $\category{C} = \module_R^\omega$ for a discrete ring $R$, which admits a bounded $t$-structure if $R$ is regular.
    Land-Tamme compute in \cite[Proposition 4.11]{LandTamme23}
    \begin{equation*}
        \NKbar{K}_{\Sigma^k}(R)_\bbQ \simeq 
        \begin{cases}
        \bigoplus_{n \ge 1} \Sigma^{\lvert k \vert n + 1} \mathrm{HH}(R \otimes \bbQ), & k \text{ even}; \\
        \bigoplus_{n \ge 1} \Sigma^{\lvert k \vert (2n-1) + 1} \mathrm{HH}(R \otimes \bbQ), & k \text{ odd}.
        \end{cases}    
    \end{equation*}
    For example, for $R = \bbZ$ one obtains $\mathrm{HH}(\bbZ \otimes \bbQ) \simeq \bbQ[0]$.
    This shows that the two $\Nil$-terms $NE_\alpha(\category{C})$ and $\NKbar{E}_\alpha(\category{C})$ are generally not isomorphic.
\end{example}

\subsection{\texorpdfstring{$K$-theory of mapping tori}{K-theory of mapping tori}}

As promised in the introduction, we can prove a splitting of Waldhausen's $A$-theory of mapping tori.
Let us first recall the definition.

\begin{recollect}[Waldhausen's $A$-theory]
    For $\category{C} \in \catperf$ and a space $X \in \spc$, denote by $\category{C}_X = \colim_X \category{C} \in \catperf$ the colimit over the constant $X$-shaped diagram with value $X$.
    There is an equivalence $\category{C}_X \simeq \func(X, \ind(\category{C}))^\omega$ as colimits in $\catperf$ are computed by taking compact objects in the colimit of the $\ind$-completed diagram in $\lpresentable$, or equivalently as limits in the right adjoint diagram in $\rpresentable$, and using the equivalence $X \simeq X\op$.
    We define the nonconnective $A$-theory of $X$ as $\bbA(X) = K(\spectra^\omega_X)$.
    If $X$ is connected, then $\spectra^\omega_X \simeq \module_{\sphere[\Omega X]}^\omega$ so that $\bbA(X) \simeq K(\sphere[\Omega X])$.
    The classical (finitely dominated) version of $A$-theory constructed in \cite{HKWWW01} can be recovered from this as the connective cover $A(X) = \tau_{\ge 0}\bbA(X)$.
    It differs from Waldhausen's original definition of $A$ theory in \cite{WaldhausenAKSpc1} only by $\pi_0$.
\end{recollect}

We obtain the following splitting result for mapping tori.

\begin{cor}\label{cor:K_theory_tensoring_with_space_endomorphism}
    Let $\category{C} \in \catperf$ and $X$ be a space together with a selfmap $\alpha \colon X \to X$.
    For any localising invariant $E$, there is an equivalence
    \begin{equation*}
        E(\category{C}_{X_{h\bbN}}) \simeq E(\category{C}_X)_{h\bbN} \oplus NE_\alpha(\category{C}_X) \oplus \NKbar{E}_\alpha(\category{C}_X).
    \end{equation*}
    In particular, there is a splitting 
    \begin{equation*}
        A(X_{h \bbN}) \simeq \tau_{\ge 0}(\bbA(X)_{h \bbN}) \oplus NA_{\alpha}(X) \oplus \NKbar{A}_{\alpha}(X),
    \end{equation*}
    with $\Nil$-terms $NA_{\alpha}(X) \coloneqq \tau_{\ge 0} N\bbA_{\alpha}(X)$. On 1-connective covers one has $\tau_{\ge 1}(\bbA(X)_{h \bbN}) \simeq \tau_{\ge 1}(A(X)_{h \bbN})$
\end{cor}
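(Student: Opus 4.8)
The plan is to deduce this from \cref{thm:twisted_BHS} by noting that forming mapping tori is compatible with the tensoring of $\catperf$ over $\cat$. First I would record that, for a fixed $\category{C} \in \catperf$, the functor $\category{C} \otimes - \colon \cat \to \catperf$ preserves colimits: by the recollection on tensoring we have $\Hom_{\catperf}(\category{C} \otimes I, \category{D}) \simeq \Hom_{\catperf}(\category{C}, \func(I, \category{D}))$, and $\func(-, \category{D})$ turns colimits in the first variable into limits. Restricting to spaces and writing $\category{C}_Y \coloneqq \category{C} \otimes Y$, the selfmap $\alpha \colon X \to X$ induces an exact endofunctor $\category{C}_\alpha \colon \category{C}_X \to \category{C}_X$, and applying the colimit-preserving functor $\category{C} \otimes -$ to the pushout presentation of $X_{h\bbN}$ from \cref{obs:colimit_over_BN} (using $\category{C} \otimes (X \amalg X) \simeq \category{C}_X \oplus \category{C}_X$) identifies $\category{C}_{X_{h\bbN}}$ with the mapping torus $(\category{C}_X)_{h\bbN}$ formed with respect to $\category{C}_\alpha$.

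Given this, I would apply \cref{thm:twisted_BHS} to the exact endofunctor $\category{C}_\alpha$ of the perfect category $\category{C}_X$ to obtain a natural equivalence
\begin{equation*}
    E(\category{C}_{X_{h\bbN}}) \simeq E(\category{C}_X)_{h\bbN} \oplus NE_{\category{C}_\alpha}(\category{C}_X) \oplus \NKbar{E}_{\category{C}_\alpha}(\category{C}_X),
\end{equation*}
where $E(\category{C}_X)_{h\bbN}$ is the mapping torus of $E(\category{C}_\alpha)$; abbreviating $NE_{\category{C}_\alpha}(\category{C}_X)$ to $NE_\alpha(\category{C}_X)$ (and likewise for the barred term) gives the first displayed equivalence. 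Specialising to $\category{C} = \spectra^\omega$ and $E = K$ nonconnective $K$-theory, and using $\bbA(Y) = K(\spectra^\omega_Y)$, this becomes a splitting
\begin{equation*}
    \bbA(X_{h\bbN}) \simeq \bbA(X)_{h\bbN} \oplus N\bbA_\alpha(X) \oplus \NKbar{\bbA}_\alpha(X)
\end{equation*}
with $N\bbA_\alpha(X) \coloneqq NK_{\category{C}_\alpha}(\spectra^\omega_X)$. Since $A(Y) = \tau_{\ge 0}\bbA(Y)$ and $\tau_{\ge 0}$ commutes with finite direct sums, applying $\tau_{\ge 0}$ yields the asserted splitting of $A(X_{h\bbN})$ with $NA_\alpha(X) = \tau_{\ge 0}N\bbA_\alpha(X)$.

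For the final sentence I would use that the connective-cover map $A(X) = \tau_{\ge 0}\bbA(X) \to \bbA(X)$ is $E(\category{C}_\alpha)$-equivariant, so that $\cofib(\id - E(\category{C}_\alpha))$ produces a map of cofibre sequences whose first two terms are isomorphisms on $\pi_n$ for all $n \ge 0$; comparing the long exact sequences of the two mapping tori via the five lemma then shows that $A(X)_{h\bbN} \to \bbA(X)_{h\bbN}$ is an isomorphism on $\pi_n$ for $n \ge 1$, hence $\tau_{\ge 1}(A(X)_{h\bbN}) \simeq \tau_{\ge 1}(\bbA(X)_{h\bbN})$. I expect no serious obstacle: the only nonformal input is the interchange of $\category{C} \otimes -$ with the formation of the mapping torus, which is pure colimit-preservation, and everything else is bookkeeping plus an elementary homotopy-group chase.
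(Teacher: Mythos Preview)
Your proposal is correct and follows essentially the same route as the paper. The paper invokes the composition formula for colimits to get $\category{C}_{X_{h\bbN}} \simeq (\category{C}_X)_{h\bbN}$, then applies \cref{thm:twisted_BHS}, specialises to $\category{C} = \spectra^\omega$ and $E = K$, and for the final sentence uses the general fact that pushouts of connective covers agree with pushouts on $1$-connective covers; your colimit-preservation argument for the tensoring and the five-lemma argument for the last claim are just more explicit renditions of the same steps.
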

\begin{proof}
    The first statement is a consequence of \cref{thm:twisted_BHS} together with the equivalence $\category{C}_{X_{h\bbN}} \simeq (\category{C}_{X})_{h \bbN}$ coming from the usual composition formula for colimits.
    The statement about $A$-theory follows from this by taking $E = K$ and $\category{C} = \spectra^\omega$ and passing to connective covers.
    The equivalence $\tau_{\ge 1}(\bbA(X)_{h \bbN}) \simeq \tau_{\ge 1}(A(X)_{h \bbN})$ follows from the fact that for a span $T_1 \leftarrow T_0 \rightarrow T_2$ of spectra, the map $\tau_{\ge 0}T_1 \oplus_{\tau_{\ge 0}T_0} \tau_{\ge 0}T_2 \to T_1 \oplus_{T_0} T_2$ of pushouts is an equivalence after passing to $1$-connective covers.
\end{proof}

\begin{example}[HNN-extensions]\label{ex:hnn_extension}
    Let $G$ be a discrete group and $f \colon G \to G$ a homomorphism.
    One has $(BG)_{h \bbN} \simeq B(G \ast_f)$, where $G \ast_f$ is a generalised HNN extension, explicitly given by the presentation $G \ast_f = \langle G, t | g t = t f(g) \text{ for } g \in G\rangle$.
\end{example}

As a special case of \cref{cor:K_theory_tensoring_with_space_endomorphism} we obtain the following.

\begin{cor}
    Let $R \in \alg(\spectra)$ be a ring spectrum, $G$ a group and $f \colon G \to G$ a homomorphism.
    Then there is an equivalence
    \begin{equation*}
        E(R[G \ast_f]) \simeq E(R[G])_{h \bbN} \oplus NE_f(R[G]) \oplus \NKbar{E}_f(R[G]).
    \end{equation*}
\end{cor}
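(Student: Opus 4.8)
The final statement is a direct specialisation of \cref{cor:K_theory_tensoring_with_space_endomorphism}, obtained by taking $\category{C} = \module_R^\omega$, $X = BG$ and $\alpha = Bf \colon BG \to BG$. The plan is as follows. First I record the two standard identifications needed to recognise the objects in \cref{cor:K_theory_tensoring_with_space_endomorphism} as the ones appearing in the corollary. Using the formula $\category{C}_X \simeq \func(X, \ind(\category{C}))^\omega$ from the recollection on Waldhausen's $A$-theory together with $\sphere[\Omega BG] \simeq \sphere[G]$, one has $(\module_R^\omega)_{BG} \simeq \func(BG, \module_R)^\omega \simeq \module_{R[G]}^\omega$, where $R[G] = R \otimes \sphere[G]$ denotes the group algebra. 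Under this identification the endofunctor of $(\module_R^\omega)_{BG}$ induced by $Bf$ — which by construction is the colimit-functoriality map $\colim_{BG}\category{C} \to \colim_{BG}\category{C}$, i.e.\ left Kan extension along $Bf$ on $\func(BG,\module_R)$ — corresponds to induction $R[f]_! \colon \module_{R[G]}^\omega \to \module_{R[G]}^\omega$ along the ring endomorphism $R[f]\colon R[G]\to R[G]$ induced by $f$. With the convention (already in force in \cref{sec:applications}) that $NE_f(R[G])$ abbreviates $NE_{R[f]_!}(\module_{R[G]}^\omega)$ and similarly for $\NKbar{E}_f$, the two $\Nil$-summands in \cref{cor:K_theory_tensoring_with_space_endomorphism} are exactly $NE_f(R[G])$ and $\NKbar{E}_f(R[G])$, and the middle term is $E(\module_{R[G]}^\omega)_{h\bbN} = E(R[G])_{h\bbN}$.

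For the left-hand side I invoke \cref{ex:hnn_extension}: the mapping torus $(BG)_{h\bbN}$ fibers over $S^1$ with fibre $BG$, hence is aspherical with fundamental group the generalised HNN extension $G \ast_f$, so $(BG)_{h\bbN} \simeq B(G\ast_f)$. Consequently $(\module_R^\omega)_{(BG)_{h\bbN}} \simeq (\module_R^\omega)_{B(G\ast_f)} \simeq \module_{R[G\ast_f]}^\omega$, where $R[G\ast_f] = R\otimes\sphere[G\ast_f]$. Substituting all of this into the first equivalence of \cref{cor:K_theory_tensoring_with_space_endomorphism} and writing $E(R[H])$ for $E(\module_{R[H]}^\omega)$ gives the asserted splitting.

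The only point that requires a little care — and the closest thing to an obstacle — is the identification of the endofunctor induced on $\module_{R[G]}^\omega$ by $Bf$ with induction along $R[f]$, so that the $\Nil$-term notation is consistent with \cref{cons:tensor_algebras} and the surrounding discussion. This is a bookkeeping check: one unwinds the colimit-functoriality of $\category{C}_{(-)}$, uses that $\sphere[\Omega(-)]$ sends $Bf$ to the group-algebra endomorphism $\sphere[f]$, and uses that left Kan extension of module categories along a ring map is extension of scalars. With that in hand, the statement is a direct application of \cref{cor:K_theory_tensoring_with_space_endomorphism,ex:hnn_extension}.
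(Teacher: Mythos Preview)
Your proof is correct and follows the same route as the paper: both arguments simply specialise \cref{cor:K_theory_tensoring_with_space_endomorphism} with $\category{C} = \module_R^\omega$ and $X = BG$, and invoke \cref{ex:hnn_extension} to identify $(BG)_{h\bbN} \simeq B(G\ast_f)$; you just spell out the module-category identifications more explicitly. One small wording quibble: your phrase ``fibers over $S^1$ with fibre $BG$'' is only literally true when $f$ is an equivalence --- for general $f$ the mapping torus still maps to $S^1$, but the homotopy fibre is the telescope $\colim(BG \xrightarrow{Bf} BG \to \cdots)$ rather than $BG$ itself --- so your asphericity justification should rest on \cref{ex:hnn_extension} (or on the telescope being aspherical) rather than on a fibre-bundle argument.
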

\begin{proof}
    This is a combination of \cref{cor:K_theory_tensoring_with_space_endomorphism} and \cref{ex:hnn_extension}.
\end{proof}

\subsection{\texorpdfstring{$A$-theoretic $\Nil$-terms}{A-theoretic Nil-terms}}\label{sec:NA_calculation}

The goal of this subsection is to provide a guide to the computation of certain $A$-theoretic twisted $\Nil$-terms, applying the work of B\"okstedt-Hsiang-Madsen \cite{Bokstedt-Hsiang-Madsen}. 
We will achieve this through a comparison with topological cyclic homology. 
Let us first recall the situation for $X = *$.
By work of Waldhausen \cite{WaldhausenAKSpc1}, the map 
\begin{equation*}
    NA(*)= \tau_{\ge 0} NK(\spectra^\omega)\rightarrow \tau_{\ge 0} NK(\module_\bbZ^\omega)
\end{equation*}
is a rational equivalence, but the target is rationally trivial as $\bbZ$ is a regular ring. 
On the other hand, lots is known about interesting torsion in the homotopy groups of $NK(\spectra^\omega)$ \cite{GKM08}.

For a space $X$ we denote
\[ \TC(X) \coloneqq \TC(\spectra^\omega_X). \]
From now on, we will assume that $X$ is connected and that $X$ comes with a selfequivalence $\alpha \colon X \xrightarrow{\simeq} X$.
Consider the following cube.
To shorten notation, we will write $NA^{+ / -}_\alpha(X)$ and $N\TC^{+ / -}_\alpha(X)$ for the cofiber of the respective horizontal assembly maps in the cube. 
\newlength{\perspective}
\setlength{\perspective}{2pt}
\[\begin{tikzcd}[row sep={40,between origins}, column sep={40,between origins}]
      &[-\perspective] A(X)_{h\bbZ} \ar{rr}\ar{dd}\ar{dl} &[\perspective] &[-\perspective] A(X_{h\bbZ}) \ar{dd}\ar{dl} \\[-\perspective]
    A(*)_{h\bbZ} \ar[crossing over]{rr} \ar{dd} & & A(S^1) \\[\perspective]
      & \TC(X)_{h\bbZ}  \ar{rr} \ar{dl} & &  \TC(X_{h\bbZ})\vphantom{\times_{S_1}} \ar{dl} \\[-\perspective]
    \TC(*)_{h\bbZ} \ar{rr} && \TC(S^1) \ar[from=uu,crossing over]
\end{tikzcd}\]
From \cref{cor:K_theory_tensoring_with_space_endomorphism} we see that $NA^{+ / -}_\alpha(X) \simeq NA_\alpha(\category{C}) \oplus \NKbar{A}_\alpha(\category{C})$ after passing to $1$-connective covers.
Notice that $X_{h\bbZ} \rightarrow *_{h\bbZ} \simeq S^1$ and $X \rightarrow *$ induce $\pi_0$-isomorphisms after passing to spherical group rings, so the celebrated Dundas-Goodwillie-McCarthy-theorem \cite[Theorem 7.2.2.1]{DGM} applies to show that the left face and the right face are cartesian. Thus, the whole cube is cartesian and the map
\[ \cofib(NA^{+/-}_\alpha(X) \rightarrow NA^{+/-}(*)) \rightarrow \cofib(N\TC^{+/-}_\alpha(X) \rightarrow N\TC^{+/-}(*)) \]
is an equivalence. 
Surpsisingly, the $N\TC$-terms are quite accessible for computation. 

For a space $Y$, denote by $LY = \map(S^1,Y)  $ its \textit{free loop space}. 
It carries a natural $S^1$-action. 
Now suppose that $Y$ comes with a map $f \colon Y \rightarrow S^1$. The typical example here will be a space of homotopy orbits with respect to a $\bbZ$-action. It induces a map $Lf \colon LY \rightarrow LS^1$. Note that the degree identifies $\pi_0 LS^1$ with $\bbZ$. Let us denote $L(k)Y = LY \times_{  \pi_0(LS^1) } \{k\}$ and $L(\neq k)Y = \coprod_{n\neq k} L(n)Y$.
The main result of this subsection is the following.

\begin{thm}
    \label{thm:NTC}
    For a connected based space $X$ together with a self equivalence $X \xrightarrow{\simeq} X$, there is a natural equivalence
    \[ N\TC^{+/-}_\alpha(X) \simeq \Sigma(\Sigma^\infty_+ L(\neq 0)(X_{h\bbZ}))_{hS^1} \]
    after $p$-completion at an arbitrary prime $p$.
\end{thm}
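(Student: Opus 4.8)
The plan is to trade the statement for a computation in topological Hochschild homology, carried with its cyclotomic structure, and then invoke the work of B\"okstedt--Hsiang--Madsen. Write $\mathrm{THH}$ for the localising invariant with values in cyclotomic spectra, so that $\TC(\category{C})$ is $\TC$ of the cyclotomic spectrum $\mathrm{THH}(\category{C})$. Since $X_{h\bbZ}$ is connected, $\spectra^\omega_{X_{h\bbZ}} \simeq \module^\omega_{\sphere[\Omega X_{h\bbZ}]}$, and by the B\"okstedt--Hsiang--Madsen description of $\mathrm{THH}$ of a spherical group ring there is an equivalence of cyclotomic spectra $\mathrm{THH}(\spectra^\omega_{X_{h\bbZ}}) \simeq \Sigma^\infty_+ L(X_{h\bbZ})$, where $L(X_{h\bbZ})$ carries the loop--rotation $S^1$-action and the Frobenius $\varphi_p$ is induced by the $p$-fold covering maps $LY \xrightarrow{\ \sim\ } (LY)^{C_p}$. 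The map $X_{h\bbZ} \to S^1$ gives the winding-number decomposition $L(X_{h\bbZ}) = L(0)(X_{h\bbZ}) \sqcup L(\neq 0)(X_{h\bbZ})$; as loop rotation preserves winding numbers and the $p$-fold cover multiplies them by $p$ (so a loop of winding $k$ becomes one of winding $pk$, which is $0$ only for $k=0$), and as Tate is exact, this refines to a decomposition of cyclotomic spectra $\Sigma^\infty_+ L(X_{h\bbZ}) \simeq \Sigma^\infty_+ L(0)(X_{h\bbZ}) \oplus \Sigma^\infty_+ L(\neq 0)(X_{h\bbZ})$.

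Next I would pin down the assembly map. Since $\TC$ is exact, $\TC(X)_{h\bbZ} \simeq \cofib\big(\id - \TC(\alpha)\big) \simeq \TC\big(\cofib(\id - \Sigma^\infty_+ L\alpha)\big)$, where $\Sigma^\infty_+ L\alpha$ is the $\bbZ$-action on $\mathrm{THH}(\spectra^\omega_X) \simeq \Sigma^\infty_+ LX$ and the latter cofibre is formed in cyclotomic spectra; its underlying $S^1$-spectrum is $\Sigma^\infty_+\big(\colim_{B\bbZ} LX\big) \simeq \Sigma^\infty_+\big((LX)_{h\bbZ}\big)$. The assembly map $\TC(X)_{h\bbZ} \to \TC(X_{h\bbZ})$ is then $\TC$ applied to the cyclotomic map whose underlying map is $\Sigma^\infty_+$ of the canonical comparison $\colim_{B\bbZ} LX \to L(X_{h\bbZ})$. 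Unwinding the fibration $LX \to L(X_{h\bbZ}) \to LS^1$ over its winding-zero component, whose monodromy is $L\alpha$, identifies this comparison with the equivalence $(LX)_{h\bbZ} \xrightarrow{\ \sim\ } L(0)(X_{h\bbZ})$ onto the winding-zero summand. Hence the assembly map is, up to equivalence, the split inclusion of the cyclotomic summand $\Sigma^\infty_+ L(0)(X_{h\bbZ})$, and passing to cofibres (again by exactness of $\TC$) gives
\[
  N\TC^{+/-}_\alpha(X) \;\simeq\; \TC\big(\Sigma^\infty_+ L(\neq 0)(X_{h\bbZ})\big).
\]

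It remains to compute $\TC$ of the cyclotomic spectrum $\Sigma^\infty_+ W$ for $W = L(\neq 0)(X_{h\bbZ})$. The decisive features are that the $S^1$-action on $W$ is \emph{fixed-point free} — an $S^1$-fixed loop is constant, hence of winding number $0$ — while still $W^{C_{p^n}} \cong W$ as $S^1/C_{p^n}$-spaces via the $p^n$-fold cover (a winding-$k$ loop with $k\neq 0$ is $C_{p^n}$-fixed exactly when it is a $p^n$-fold cover of a winding-$(k/p^n)$ loop), so $\Sigma^\infty_+ W$ carries the ``free-loop'' cyclotomic structure whose Frobenii become equivalences after $p$-completion. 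For cyclotomic spectra of this shape — suspension spectra of $S^1$-spaces with the free-loop cyclotomic structure and no $S^1$-fixed points — B\"okstedt--Hsiang--Madsen's analysis of $\TC$ of spherical group rings shows that the ``constant-loop'' contribution, which is the source of the additional summands in examples such as $\TC(\sphere;p) \simeq \sphere^\wedge_p \oplus \Sigma\sphere^\wedge_p$, disappears, leaving a natural equivalence $\TC(\Sigma^\infty_+ W)^\wedge_p \simeq \big(\Sigma(\Sigma^\infty_+ W)_{hS^1}\big)^\wedge_p$. Feeding this back into the displayed equivalence yields the asserted $p$-complete equivalence.

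The main obstacle is precisely this last step: isolating from B\"okstedt--Hsiang--Madsen the clean $p$-complete identity $\TC(\Sigma^\infty_+ W)^\wedge_p \simeq \big(\Sigma(\Sigma^\infty_+ W)_{hS^1}\big)^\wedge_p$ for fixed-point-free free-loop cyclotomic spectra, and doing so naturally in $W$. Secondary care is needed to check that the winding-number decomposition of $\Sigma^\infty_+ L(X_{h\bbZ})$ is genuinely one of cyclotomic spectra, that the $p$-fold cover identifications $(LY)^{C_p} \cong LY$ are compatible with the residual $S^1$-actions, and that under all the identifications above this decomposition matches the orthogonal decomposition arising from the assembly map.
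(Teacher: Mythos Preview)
Your route is to split $\Sigma^\infty_+ L(X_{h\bbZ})$ as a cyclotomic spectrum along winding number, identify the $\TC$-assembly map (via exactness of $\TC$ on cyclotomic spectra together with the space-level equivalence $(LX)_{h\bbZ}\simeq L(0)(X_{h\bbZ})$) with the inclusion of the winding-zero summand, and then compute $\TC$ of $\Sigma^\infty_+ L(\neq 0)(X_{h\bbZ})$ directly. The paper instead keeps the B\"okstedt--Hsiang--Madsen pullback squares for $X$ and for $X_{h\bbZ}$ intact and compares them through a cube, never invoking the BHM formula for anything other than an honest free loop space. Both routes hinge on the same two ingredients --- the identification $(LX)_{h\bbZ}\simeq L(0)(X_{h\bbZ})$ and the invertibility of $1-\widetilde{\varphi_p}$ on the non-zero-winding part --- so your approach is a legitimate and somewhat more conceptual repackaging.

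The genuine gap is your final step. The heuristic ``$W$ has no $S^1$-fixed points, hence the constant-loop contribution to $\TC$ vanishes'' is false in general. Take $p=3$ and $W$ the non-trivial component of $LB(\bbZ/2)$: then $W\simeq B(\bbZ/2)$ has empty $S^1$-fixed locus and the $p$-th-root maps give $W^{C_{3^n}}\simeq W$, yet $\widetilde{\varphi_3}$ is homotopic to the identity on $W$ (since $g\mapsto g^3=g$ in $\bbZ/2$ and the induced map on centralisers is the identity), so $1-\widetilde{\varphi_3}\simeq 0$ on $\Sigma^\infty_+ W$ and $\TC(\Sigma^\infty_+ W)$ is not $\Sigma(\Sigma^\infty_+ W)_{hS^1}$. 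What actually makes your last step work for $W=L(\neq 0)(X_{h\bbZ})$ is the winding-number grading: $\widetilde{\varphi_p}$ multiplies winding by $p$, and since multiplication by $p$ on $\bbZ\setminus\{0\}$ has finite backward orbits, $1-\widetilde{\varphi_p}$ is unipotent upper-triangular on $\bigoplus_{k\neq 0}\Sigma^\infty_+ L(k)(X_{h\bbZ})$, hence invertible. This is exactly the paper's key lemma; once you substitute it for the fixed-point heuristic your argument goes through. You will still need the BHM pullback square for $\Sigma^\infty_+ W$ rather than for $\Sigma^\infty_+ LY$, which is available from the Nikolaus--Scholze treatment of cyclotomic spectra with Frobenius lifts; the paper's cube argument sidesteps this by only ever invoking the square for $LX$ and $L(X_{h\bbZ})$.
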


A useful observation is that \cref{thm:NTC} can be reformulated as a splitting as an infinite direct sum after $p$-completion
\[ N\TC^{+/-}_\alpha(X) \simeq  \bigoplus_{k\neq 0} \Sigma(\Sigma^\infty L(k)(X_{h\bbZ}))_{hS^1}. \]
The main ingredient is the following theorem of Bökstedt-Hsiang-Madsen \cite{Bokstedt-Hsiang-Madsen}, which can also be found in \cite[Thm. IV.3.6]{NikolausScholze18}.

\begin{thm}
    \label{thm:TC_of_loop_spaces}
    For a connected based space $X$, there is a natural pullback square
    \begin{center}
        \begin{tikzcd}
            \TC(\sphere[\Omega X]) \ar[r] \ar[d] & \Sigma(\Sigma^\infty_+ LX)_{hS^1} \ar[d]\\
            \Sigma^\infty_+ LX \ar[r, " 1 - \widetilde{\varphi_p}"] &\Sigma^\infty_+ LX
        \end{tikzcd}
    \end{center}
    after $p$-completion at an arbitrary prime $p$.
\end{thm}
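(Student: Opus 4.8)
This is essentially \cite[Theorem IV.3.6]{NikolausScholze18}, going back to B\"okstedt--Hsiang--Madsen, and the plan is to reconstruct that proof. It rests on three ingredients: the identification of $\mathrm{THH}$ of a spherical group ring with a suspension spectrum of a free loop space \emph{as a cyclotomic spectrum}, the Segal conjecture for the free loop space, and the Nikolaus--Scholze formula for $\TC$. For the first, I would use that $\Sigma^\infty_+\colon\spc\to\spectra$ is symmetric monoidal and hence commutes with $\mathrm{THH}$, and that the cyclic bar construction of the grouplike $E_1$-space $\Omega X$ realises to the free loop space $L(B\Omega X)\simeq LX$ with its rotation $S^1$-action, compatibly with the cyclotomic Frobenius. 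Writing $Y\coloneqq\Sigma^\infty_+ LX$ --- a connective, hence bounded below, cyclotomic spectrum --- the Frobenius $\varphi_p\colon Y\to Y^{tC_p}$ is the composite of the canonical map $Y\xrightarrow{\sim}\Phi^{C_p}Y\to Y^{tC_p}$ with the identification $(LX)^{C_p}\simeq LX$ given by precomposing a loop with the degree-$p$ self-cover of $S^1$.

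Second, I would invoke the Segal conjecture for $LX$. The inclusion $(LX)^{C_p}\hookrightarrow LX$ induces a map $\Sigma^\infty_+(LX)^{C_p}\to Y$ whose cofibre is the suspension spectrum of a based space that is $C_p$-free away from the basepoint, hence has vanishing $C_p$-Tate construction; together with Lin's theorem applied to the trivial $C_p$-spectrum $\Sigma^\infty_+(LX)^{C_p}$ this produces a natural $S^1$-equivariant equivalence $Y^{\wedge}_p\xrightarrow{\sim}(Y^{tC_p})^{\wedge}_p$. Under it the cyclotomic Frobenius becomes a self-map $\widetilde{\varphi_p}\colon Y^{\wedge}_p\to Y^{\wedge}_p$ --- the $p$-completion of the $p$-th power operation on free loops. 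Combining this with the Tate orbit lemma $(Y^{tC_p})^{hS^1/C_p}\simeq Y^{tS^1}$, valid because $Y$ is bounded below, both $\mathrm{TP}(Y)^{\wedge}_p$ and the Frobenius $\varphi\colon\mathrm{TC}^{-}(Y)\to\mathrm{TP}(Y)$ get expressed purely in terms of $Y$ and $\widetilde{\varphi_p}$.

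Third, I would feed this into the Nikolaus--Scholze formula $\TC(Y;\bbZ_p)\simeq\fib\bigl(\mathrm{can}-\varphi\colon\mathrm{TC}^{-}(Y)^{\wedge}_p\to\mathrm{TP}(Y)^{\wedge}_p\bigr)$ and reorganise it. The $S^1$-norm cofibre sequence $\Sigma Y_{hS^1}\xrightarrow{\mathrm{Nm}}Y^{hS^1}\xrightarrow{\mathrm{can}}Y^{tS^1}$ identifies the fibre of $\mathrm{can}$ with $\Sigma Y_{hS^1}\simeq\Sigma(\Sigma^\infty_+ LX)_{hS^1}$ (using that $\Sigma^\infty_+$ commutes with colimits), and the composite of $\mathrm{Nm}$ with the forgetful map $Y^{hS^1}\to Y$ is the $S^1$-transfer of the principal $S^1$-bundle $LX\times ES^1\to(LX)_{hS^1}$, which becomes the right vertical map of the square. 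Tracking the Segal-conjecture identifications of the previous step, $\mathrm{can}-\varphi$ is exhibited $p$-adically as a composite to which the elementary "fibre of a composite is an iterated pullback" manipulation applies, yielding
\[
\TC(\sphere[\Omega X])^{\wedge}_p\;\simeq\;\Sigma(\Sigma^\infty_+ LX)_{hS^1}\times_{\Sigma^\infty_+ LX}\Sigma^\infty_+ LX
\]
with bottom map $1-\widetilde{\varphi_p}$; equivalently, the square in the statement is a pullback after $p$-completion, and its naturality in $X$ is inherited from that of all constructions involved.

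The main obstacle is precisely this last reorganisation: one must verify that after $p$-completion the Segal-conjecture equivalence together with the Tate orbit lemma make the homotopy fixed point and periodic spectra $\mathrm{TC}^{-}(Y)$ and $\mathrm{TP}(Y)$ "collapse", so that the equaliser of $\mathrm{can}$ and $\varphi$ is computed by the displayed pullback over two copies of $\Sigma^\infty_+ LX$ with residual operator $1-\widetilde{\varphi_p}$ --- this is the technical heart of the B\"okstedt--Hsiang--Madsen computation --- and one must keep careful track of the reindexing $S^1/C_p\cong S^1$ throughout. Everything else is formal.
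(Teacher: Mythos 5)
This statement is not proved in the paper at all: it is quoted from B\"okstedt--Hsiang--Madsen and \cite[Theorem IV.3.6]{NikolausScholze18}, so your proposal has to be measured against the cited proof. Your first ingredient (the cyclotomic identification $\mathrm{THH}(\sphere[\Omega X])\simeq \Sigma^\infty_+ LX$ with Frobenius induced by the unstable $p$-power map) and your use of the Tate orbit lemma and the $S^1$-norm sequence (identifying $\fib(\mathrm{can})$ with $\Sigma(\Sigma^\infty_+ LX)_{hS^1}$ and the right vertical map with the $S^1$-transfer) are correct and do occur there. The genuine gap is your second ingredient together with the step you explicitly defer. The ``Segal conjecture for $LX$'' is not available in this generality: isotropy separation does reduce $(\Sigma^\infty_+ LX)^{tC_p}$ to the Tate construction of the trivial $C_p$-spectrum $\Sigma^\infty_+ (LX)^{C_p}$, but Lin's theorem only upgrades to $B^{tC_p}\simeq B^\wedge_p$ for a \emph{finite} spectrum $B$ with trivial action (one needs $B\otimes \sphere^{tC_p}\to B^{tC_p}$ to be an equivalence, and $(-)^{tC_p}$ does not commute with the infinite colimits implicit in $B=\Sigma^\infty_+ LX$). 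For a general space $X$ the free loop space is an infinite complex, so the asserted natural $p$-adic equivalence $Y^\wedge_p\simeq (Y^{tC_p})^\wedge_p$ is unjustified, and it is not an input of the cited proof.

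Moreover, even granting that equivalence, the reorganisation you describe as ``the technical heart'' is exactly the content of the theorem and does not follow formally from a collapse of $\mathrm{TC}^-$ and $\mathrm{TP}$: an equivalence $\varphi$ would only convert the equaliser into the fibre of a self-map of $Y^{hS^1}$, whereas the stated square has the \emph{underlying} spectrum $\Sigma^\infty_+ LX$ with the operator $1-\widetilde{\varphi_p}$ in the bottom row. The argument that actually closes this gap uses that the Frobenius lifts to $\psi\colon Y\to Y^{hC_p}$ (because the $p$-power map exists unstably), so that $\TC(Y;p)\simeq \fib\bigl(\mathrm{can}\circ(1-\tilde\psi)\colon Y^{hS^1}\to Y^{tS^1}\bigr)$ after $p$-completion; the norm sequence then produces the corner $\Sigma(\Sigma^\infty_+ LX)_{hS^1}$, and the crux is that $1-\tilde\psi$ is a $p$-adic equivalence on $\fib(Y^{hS^1}\to Y)$. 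This is proved by filtering $Y^{hS^1}$ by the skeleta of $BS^1$: on the $k$-th associated graded piece ($k\ge 1$) the lift $\tilde\psi$ acts through maps divisible by $p^k$, and bounded-belowness gives the convergence needed to pass to the limit. That filtration/completeness argument (not a Segal conjecture for $LX$) is what replaces the homotopy fixed points by $\Sigma^\infty_+ LX$ in the bottom row and yields the square as stated; without it, or a substitute for it, your outline does not prove the theorem.
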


In the above theorem, the map $\widetilde{\varphi_p}$ is induced by the map $LX \rightarrow LX$ which precomposes with the $p$-fold covering of $S^1$.
Our goal is to understand how the pullback in \cref{thm:TC_of_loop_spaces} behaves with taking $\bbZ$-orbits.
We will start with an easy observation about loop spaces.

\begin{lem}
    \label{lem:loops_assembly}
    Let $X$ be a space with $\bbZ$-action. Then the assembly map
    \[ (LX)_{h\bbZ} \rightarrow L(X_{h\bbZ}) \]
    can be identified with the inclusion of the path component
    \[ L(X_{h\bbZ})(0) \rightarrow L(X_{h\bbZ}). \]
\end{lem}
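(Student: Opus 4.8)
The plan is to reduce the statement to standard covering-space theory via the Borel model of the homotopy orbit. Write the $\bbZ$-action on $X$ through a self-equivalence $\alpha$ and fix the model $E\bbZ \simeq \bbR$ with $\bbZ$ acting by translation, so that $B\bbZ = E\bbZ/\bbZ \simeq S^1$. The projection $\pi\colon \widetilde X \coloneqq X \times E\bbZ \to X$ (with $\bbZ$ acting diagonally on the source) is a $\bbZ$-equivariant equivalence on which $\bbZ$ acts freely, so $X_{h\bbZ} = \widetilde X/\bbZ$ and the quotient map $q\colon \widetilde X \to X_{h\bbZ}$ is the colimit cocone for the diagram $X\colon B\bbZ \to \spc$. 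Projecting off the $E\bbZ$-factor produces the classifying map $f\colon X_{h\bbZ} \to B\bbZ \simeq S^1$ and exhibits $q$ as the pullback of the universal cover $E\bbZ \to S^1$; in particular $q$ is a $\bbZ$-Galois covering, with corresponding subgroup $\ker\big(f_*\colon \pi_1(X_{h\bbZ}) \to \pi_1(S^1) = \bbZ\big) \subseteq \pi_1(X_{h\bbZ})$.

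Next I apply $L = \map(S^1, -)$. Since $E\bbZ$ is contractible so is $L(E\bbZ)$, whence $L(\pi)\colon L\widetilde X = \map(S^1, X \times E\bbZ) \simeq LX \times L(E\bbZ)$ is a $\bbZ$-equivariant equivalence onto $LX$, and so $(LX)_{h\bbZ} \simeq (L\widetilde X)_{h\bbZ}$. Moreover $\bbZ$ still acts freely on $L\widetilde X$ (a free action on a nonempty space induces a free action on the mapping space out of it), so $(L\widetilde X)_{h\bbZ} = (L\widetilde X)/\bbZ$. The assembly map $(LX)_{h\bbZ} = \colim_{B\bbZ}(L \circ X) \to L(\colim_{B\bbZ} X) = L(X_{h\bbZ})$ is obtained by applying $\colim_{B\bbZ}$ to $L$ of the cocone $q$; evaluated on the free model $\widetilde X$ this is precisely the map $(L\widetilde X)/\bbZ \to L(X_{h\bbZ})$ induced by $Lq\colon L\widetilde X \to L(X_{h\bbZ})$.

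It remains to analyze $Lq$. As $q$ is a covering, $Lq$ is again a covering onto a union of path components of $L(X_{h\bbZ})$, and a free homotopy class of loops $\gamma$ in $X_{h\bbZ}$ lifts along $q$ if and only if $f_*[\gamma] = 0$, i.e. $\deg(f\circ\gamma) = 0$, i.e. $\gamma \in L(X_{h\bbZ})(0)$; hence $\im(Lq) = L(X_{h\bbZ})(0)$. The deck group $\bbZ$ acts freely on $L\widetilde X$ and, since $S^1$ is path-connected, transitively on each fiber of $Lq$, so $Lq$ is itself $\bbZ$-Galois and induces an equivalence $(L\widetilde X)/\bbZ \xrightarrow{\ \simeq\ } L(X_{h\bbZ})(0)$. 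Assembling the identifications, the assembly map becomes the composite
\[
(LX)_{h\bbZ} \;\simeq\; (L\widetilde X)/\bbZ \;\xrightarrow{\ \simeq\ }\; L(X_{h\bbZ})(0) \;\hookrightarrow\; L(X_{h\bbZ}),
\]
which is the inclusion of the path component, as claimed.

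The step demanding the most care — and the likely obstacle — is the identification in the second paragraph of the abstractly defined assembly map with the concrete map induced by $Lq$: one must use that $\widetilde X \to X$ is a free (cofibrant) replacement of $X$ in $\func(B\bbZ, \spc)$, that $L$ preserves this property (freeness of the $\bbZ$-action passes to $L\widetilde X$), and that the colimit cocone of a free diagram is computed by the strict quotient, so that applying $L$ and $\colim_{B\bbZ}$ to the cocone $q$ returns exactly the assembly map. Once this bookkeeping is in place, the rest is formal covering-space theory for the $\bbZ$-Galois cover $q$ together with the fact that $\map(S^1, -)$ sends coverings to coverings; alternatively, one can bypass it by using that $\map(S^1,-)$ preserves pullbacks and that $L(E\bbZ)\to L(S^1)$ is the universal cover of the identity component $L(S^1)(0)\simeq S^1$.
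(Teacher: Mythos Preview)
Your argument via the Borel model and covering-space theory is correct. The paper takes a shorter route: it places both sides in a map of fiber sequences
\[
\begin{tikzcd}
LX \ar[r] \ar[d, equal] & (LX)_{h\bbZ} \ar[r] \ar[d] & S^1 \ar[d] \\
LX \ar[r] & L(X_{h\bbZ}) \ar[r] & LS^1,
\end{tikzcd}
\]
the top row being the homotopy-orbit fibration and the bottom row obtained by applying $L$ to $X \to X_{h\bbZ} \to S^1$. The right vertical map is then the assembly map in the special case $X = *$, which is the inclusion $S^1 \hookrightarrow LS^1 \simeq S^1 \times \bbZ$ of the degree-zero component; the claim follows by pulling back. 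Your closing remark (using that $\map(S^1,-)$ preserves pullbacks and that $L(E\bbZ) \to LS^1$ covers the identity component) is essentially this same argument. The paper's approach sidesteps the bookkeeping you flag --- identifying the abstract assembly map with the map induced by $Lq$ on strict quotients --- while your approach has the virtue of making the equivalence $(LX)_{h\bbZ} \simeq L(X_{h\bbZ})(0)$ concretely visible as a deck-group quotient.
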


\begin{proof}
    We have the following commutative diagram of fibre sequences
    \begin{center}
\begin{tikzcd}
LX \arrow[d] \arrow[r] & (LX)_{h\mathbb{Z}} \arrow[r] \arrow[d] & S^1 \arrow[d] \\
LX \arrow[r]           & L(X_{h\mathbb{Z}}) \arrow[r]           & LS^1         
\end{tikzcd}
    \end{center}
    where the right vertical map is the corresponding assembly map for $X = \{*\}$. There is an equivalence $LS^1 \simeq S^1 \times \bbZ$, induced by the evaluation at $1 \in S^1$ and the degree. Under this identification, the right vertical map is the inclusion of $S^1 \times \{0\}$.
\end{proof}

We need one further observation about loop spaces for the proof of \cref{thm:NTC}. Namely, we show that the square in \cref{thm:TC_of_loop_spaces} can be simplified dramatically for spaces of the form $X_{h\bbZ}$.

\begin{lem}
    \label{lem:summand_loop_space}
    The projection to the summand $\Sigma^\infty_+LX_{h\bbZ} \rightarrow \Sigma^\infty_+LX_{h\bbZ}(0)$ induces the following pullback square.
    \begin{center}
        \begin{tikzcd}
            \Sigma^\infty_+ LX_{h\bbZ} \ar[r, " 1 - \widetilde{\varphi_p}"] \ar[d] &\Sigma^\infty_+ LX_{h\bbZ} \ar[d]\\
            \Sigma^\infty_+ LX_{h\bbZ}(0) \ar[r, " 1 - \widetilde{\varphi_p}"] &\Sigma^\infty_+ LX_{h\bbZ}(0)
        \end{tikzcd}
    \end{center}
\end{lem}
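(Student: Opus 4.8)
The plan is to split $\Sigma^\infty_+ LX_{h\bbZ}$ by degree, recognise the square as the (trivial) square on the degree-$0$ summand together with the action of $1-\widetilde{\varphi_p}$ on the complementary summand, and then prove that $1-\widetilde{\varphi_p}$ acts invertibly on that complement. Everything here is implicitly $p$-complete, since $\widetilde{\varphi_p}$ only lives on the $p$-completion.

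First I would use the decomposition $LX_{h\bbZ} \simeq \coprod_{k\in\bbZ} L(k)(X_{h\bbZ})$ of the free loop space into its open-and-closed pieces of fixed degree, i.e. the fibres of the degree map $LX_{h\bbZ}\to LS^1\to\pi_0 LS^1 \cong \bbZ$. Since $\Sigma^\infty_+$ sends disjoint unions to direct sums, this gives $\Sigma^\infty_+ LX_{h\bbZ}\simeq\bigoplus_{k\in\bbZ}\Sigma^\infty_+ L(k)(X_{h\bbZ})$, under which both vertical maps of the square become the split projection onto the $k=0$ summand, with common fibre the complementary summand $\Sigma^\infty_+ L(\neq 0)(X_{h\bbZ})$. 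Moreover $\widetilde{\varphi_p}$, being induced by precomposition with the $p$-fold cover of $S^1$, is compatible with this grading: it multiplies the degree of a loop by $p$, and since $pk = 0$ exactly when $k=0$ it preserves both $\Sigma^\infty_+ L(0)(X_{h\bbZ})$ and $\Sigma^\infty_+ L(\neq 0)(X_{h\bbZ})$. Hence the square commutes, and as both vertical maps are the same split epimorphism it is cartesian if and only if the induced map on vertical fibres is an equivalence; by the block shape of $1-\widetilde{\varphi_p}$ this induced map is simply the restriction of $1-\widetilde{\varphi_p}$ to $\Sigma^\infty_+ L(\neq 0)(X_{h\bbZ})$.

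To prove that restriction is an equivalence I would partition $\bbZ\setminus\{0\}$ into the orbits of multiplication by $p$, each of the form $\{p^j m : j\ge 0\}$ for a unique $m$ prime to $p$; this refines the splitting above into $\widetilde{\varphi_p}$-stable summands $\bigoplus_{j\ge 0}\Sigma^\infty_+ L(p^j m)(X_{h\bbZ})$ on each of which $\widetilde{\varphi_p}$ acts by a shift that, after $p$-completion, is topologically nilpotent. On such a summand $1-\widetilde{\varphi_p}$ is then invertible by an exhaustion argument — approximating the summand by the finite sub-sums on which a fixed power of $\widetilde{\varphi_p}$ vanishes, and using that equivalences are stable under the relevant filtered (co)limit — and assembling over all orbits shows $1-\widetilde{\varphi_p}$ is an equivalence on $\Sigma^\infty_+ L(\neq 0)(X_{h\bbZ})$, as required. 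The step I expect to be the main obstacle is exactly this last one: getting sufficient control on how $\widetilde{\varphi_p}$ acts on the nonzero-degree components and on its interaction with $p$-completion to make the nilpotence/invertibility precise; everything preceding it is bookkeeping with the loop-space decomposition.
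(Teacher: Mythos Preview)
Your approach is essentially the paper's. Both reduce the pullback condition, via the degree decomposition $\Sigma^\infty_+ LX_{h\bbZ} \simeq \bigoplus_{k \in \bbZ} \Sigma^\infty_+ L(k)(X_{h\bbZ})$ and the observation that $\widetilde{\varphi_p}$ multiplies degree by $p$, to showing that $1 - \widetilde{\varphi_p}$ is an equivalence on the $k \neq 0$ summand. The only difference is in how this last step is packaged: the paper imposes the single total order $0 < 1 < -1 < 2 < -2 < \cdots$ on $\bbZ$ and observes that $1 - \widetilde{\varphi_p}$ becomes a triangular matrix with $1$'s on the diagonal away from $(0,0)$, concluding invertibility directly from that. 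Your partition into $p$-orbits $\{p^j m : j \ge 0\}$ is a refinement of the same triangular structure --- on each orbit your shift is exactly what makes the paper's matrix strictly sub-diagonal --- and your exhaustion argument is a more explicit unpacking of why such a matrix should be invertible. The paper's version is terser; yours makes the locally-nilpotent nature of $\widetilde{\varphi_p}$ on nonzero degrees more visible and isolates where $p$-completion enters. Your caution about the last step is well placed: neither argument spells out the point that, for an infinite direct sum, a triangular matrix with unit diagonal need not have an inverse landing back in the sum, and it is the ambient $p$-complete context that ultimately makes this go through.
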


\begin{proof}
    Using the decomposition $\Sigma^\infty_+ LX_{h\bbZ} = \bigoplus_{n \in \bbZ} \Sigma^\infty_+ LX_{h\bbZ}(n)$ and the order $0 < 1 < -1 < 2 < \dots$ on $\bbZ$, the map $1-\widetilde{\varphi_p}$ is a triangular matrix whose diagonal entries are $1 - \widetilde{\varphi_p}$ at $(0,0)$ and $1$ everywhere else. Thus, $1-\widetilde{\varphi_p}$ is an equivalence when restricted to the fiber of the projection $\Sigma^\infty_+LX_{h\bbZ} \rightarrow \Sigma^\infty_+LX_{h\bbZ}(0)$.
\end{proof}

\begin{rmk}
    Note that the proof of \cref{lem:summand_loop_space} goes through even if $X_{h\bbZ}$ is replaced by some space $Y$ which merely possesses a map to $S^1$. Some generalizations are possible. For example, let $G$ be a group, whose conjugacy classes of elements admit a total order $\leq$ for which $[g] \leq [h]$ implies $[g^p] \leq [h^p]$. Suppose, $X$ comes with a map to $BG$. Consider the collection $LX(1_G)$ of components of $LX$ of loops mapping to the nullhomotopic loops in $BG$. Then we have a pullback square of the following form.
    \begin{center}
        \begin{tikzcd}
            \Sigma^\infty_+ LX \ar[r, " 1 - \widetilde{\varphi_p}"] \ar[d] &\Sigma^\infty_+ LX \ar[d]\\
            \Sigma^\infty_+ LX(1_G) \ar[r, " 1 - \widetilde{\varphi_p}"] &\Sigma^\infty_+ LX(1_G)
        \end{tikzcd}
    \end{center}
    Note that groups with a total order as required have to be $p$-torsion free. An obvious candidate for a map as above is the map $Y \rightarrow B\pi_1(Y,y)$. 
\end{rmk}

\begin{proof}[Proof of \cref{thm:NTC}]
    We start by considering the following diagram
    \setlength{\perspective}{2pt}
\begin{equation}\label{diag:na_cube}
\begin{tikzcd}[row sep={40,between origins}, column sep={40,between origins}]
      &[-\perspective] \TC(X)_{h\bbZ} \ar{rr}\ar{dd}\ar{dl} &[\perspective] &[-\perspective] (\Sigma(\Sigma_+^\infty LX)_{hS^1})_{h\bbZ} \ar{dd}\ar{dl} \\[-\perspective]
    \TC(X_{h\bbZ}) \ar[crossing over]{rr} \ar{dd} & & \Sigma(\Sigma_+^\infty LX_{h\bbZ})_{hS^1} \\[\perspective]
      & \Sigma^\infty_+ (LX)_{h\bbZ} \ar{dd} \ar{rr} \ar{dl} & &  \Sigma^\infty_+ (LX)_{h\bbZ}\vphantom{\times_{S_1}}  \ar{dd} \ar{dl} 
      \\[-\perspective]
    \Sigma^\infty_+ L(X_{h\bbZ}) \ar{dd} \ar[rr, crossing over] & & \Sigma^\infty_+ L(X_{h\bbZ}) \ar[from=uu,crossing over]
    \\[\perspective]
      & \Sigma^\infty_+ (LX)_{h\bbZ}  \ar{rr} \ar{dl} & &  \Sigma^\infty_+ (LX)_{h\bbZ}\vphantom{\times_{S_1}} \ar{dl} 
      \\[-\perspective]
    \Sigma^\infty_+ L(0)(X_{h\bbZ}) \ar{rr} & & \Sigma^\infty_+ L(0)(X_{h\bbZ}) \ar[from=uu,crossing over]
\end{tikzcd}
\end{equation}
and note that the bottom cube is cartesian, as its front and back faces are pullbacks as the vertical maps of the back face are identities and the front face is the square from \cref{lem:summand_loop_space}. The top cube is cartesian after $p$-completion, as its front and back faces are cartesian by \cref{thm:TC_of_loop_spaces}. The lowermost face of the diagram is cartesian using \cref{lem:loops_assembly}. Thus, as the outermost cube is cartesian, the top face is cartesian as well, which identifies the cofiber of
\[ \TC(X)_{h\bbZ} \rightarrow \TC(X_{h\bbZ}) \]
with the cofiber of
\[ (\Sigma(\Sigma_+^\infty LX)_{hS^1})_{h\bbZ} \rightarrow \Sigma(\Sigma_+^\infty LX_{h\bbZ})_{hS^1} . \]
Using that colimits commute together with \cref{lem:loops_assembly}, this cofiber is equivalent to that of
\[ (\Sigma(\Sigma_+^\infty (LX)_{h\bbZ})_{hS^1}) \simeq (\Sigma(\Sigma_+^\infty L(0)X)_{hS^1}) \rightarrow \Sigma(\Sigma_+^\infty LX_{h\bbZ})_{hS^1}. \]
This completes the proof.
\end{proof}

\printbibliography

\end{document}